\documentclass[12pt,leqno]{article}
\usepackage{amsmath,amssymb,bbm,array,arydshln}
\usepackage{amsthm}
\usepackage{titlesec} 
\usepackage[all,cmtip]{xy}

\textheight=22cm \textwidth=15.5cm \hoffset=-1cm \voffset=-1.5cm
\titlespacing{\section}{0cm}{3.5pc}{1.5pc}

\makeatletter
\def\@citex[#1]#2{\if@filesw\immediate\write\@auxout{\string\citation{#2}}\fi
  \def\@citea{}\@cite{\@for\@citeb:=#2\do
    {\@citea\def\@citea{\@citesep}\@ifundefined
       {b@\@citeb}{{\bf ?}\@warning
       {Citation `\@citeb' on page \thepage \space undefined}}%
{\csname b@\@citeb\endcsname}}}{#1}}
\def\@citesep{; }
\makeatother

\newtheoremstyle{Kang}{}{}{\itshape}{}{\bf}{}{.5em}{}
\theoremstyle{Kang}
\newtheorem{theorem}{Theorem}[section]

\newtheorem{lemma}[theorem]{Lemma}

\newtheoremstyle{Kremark}{}{}{}{}{\bf}{}{.5em}{}
\theoremstyle{Kremark}

\newtheorem{defn}[theorem]{Definition}

\newtheorem{other}{}
\newenvironment{idef}[1]{\begin{other}}{\end{other}}

\allowdisplaybreaks[1]  

\def\fn#1{\operatorname{#1}} 
\def\bm#1{\mathbbm{#1}}

\def\side#1#2{\mathop{^{#1}\mkern-1.5mu#2}}

\title{Bogomolov Multipliers and Retract Rationality \\ for Semi-direct Products}
\author{Ming-chang Kang \\[3mm]
Department of Mathematics and \\ Taida Institute of Mathematical Sciences,\\
National Taiwan University \\ Taipei, Taiwan \\
E-mail: kang@math.ntu.edu.tw}
\date{}

\begin{document}

\maketitle

\footnote{\textit{\!\!\! Mathematics Subject Classification $(2010)$}: Primary 14E08, 14L30, 13A50, Secondary 12F12, 12F20, 20J06.}
\footnote{\textit{\!\!\! Keywords}:
Noether's problem, the rationality problem, Bogomolov multiplier, unramified Brauer groups, retract rationality, cohomology of finite groups.}
\footnote{\!\!\! Partially supported by National Center for Theoretic Science (Taipei office).}

\begin{abstract}
{\noindent\bf Abstract.} Let $G$ be a finite group. The Bogomolov
multiplier $B_0(G)$ is constructed as an obstruction to the
rationality of $\bm{C}(V)^G$ where $G\to GL(V)$ is a faithful
representation over $\bm{C}$. We prove that, for any finite groups
$G_1$ and $G_2$, $B_0(G_1\times G_2)\xrightarrow{\sim}
B_0(G_1)\times B_0(G_2)$ under the restriction map. If $G=N\rtimes
G_0$ with $\gcd\{|N|,|G_0|\}=1$, then $B_0(G)\xrightarrow{\sim}
B_0(N)^{G_0}\times B_0(G_0)$ under the restriction map. For any
integer $n$, we show that there are non-direct-product $p$-groups
$G_1$ and $G_2$ such that $B_0(G_1)$ and $B_0(G_2)$ contain
subgroups isomorphic to $(\bm{Z}/p \bm{Z})^n$ and $\bm{Z}/p^n
\bm{Z}$ respectively. On the other hand, if $k$ is an infinite
field and $G=N\rtimes G_0$ where $N$ is an abelian normal subgroup
of exponent $e$ satisfying that $\zeta_e\in k$, we will prove
that, if $k(G_0)$ is retract $k$-rational, then $k(G)$ is also
retract $k$-rational provided that certain ``local" conditions are
satisfied; this result generalizes two previous results of Saltman
and Jambor \cite{Ja}.
\end{abstract}

\section{Introduction}

Let $k$ be a field, and $L$ be a finitely generated field
extension of $k$. $L$ is called $k$-rational (or rational over
$k$) if $L$ is purely transcendental over $k$, i.e.\ $L$ is
isomorphic to some rational function field over $k$. $L$ is called
stably $k$-rational if $L(y_1,\ldots,y_m)$ is $k$-rational for
some $y_1,\ldots,y_m$ which are algebraically independent over
$L$. $L$ is called $k$-unirational if $L$ is $k$-isomorphic to a
subfield of some $k$-rational extension of $k$. It is easy to see
that ``$k$-rational" $\Rightarrow$ ``stably $k$-rational"
$\Rightarrow$ ``$k$-unirational".

The L\"uroth problem asks whether a $k$-unirational field is
necessarily $k$-rational. See the paper of Manin and Tsfasman
\cite{MT} for a survey. A special case of the L\"uroth problem is
the following Noether's problem. Let $k$ be any field, $G$ be any
finite group. Consider the regular representation $G\to
GL(V_{\fn{reg}})$ over $k$. Thus $G$ acts on the rational function
field $k(V_{\fn{reg}})=k(x_g:g\in G)$ by $k$-automorphism defined
by $h\cdot x_g=x_{hg}$ for any $g,h \in G$. Denote by $k(G)$ the
fixed field, i.e. $k(G)=k(x_g:g\in G)^G$. Noether's problem asks,
under what situation, the field $k(G)$ is $k$-rational. See Swan's
paper for a survey of Noether's problem \cite{Sw1}.

The Bogomolov multiplier, denoted by $B_0(G)$ which will be
defined in Definition \ref{d1.3}, is an obstruction to the
rationality (resp. the stable rationality) of $k(G)$ when $k$ is
an algebraically closed field satisfying that $\gcd
\{|G|,\fn{char} k\}=1$. Before we define $B_0(G)$, let's recall
the notion of retract rational which evolves from the notion of a
unirational variety $V$ such that the dominating map $\bm{P}^n
\dashrightarrow V$ has a rational section.

\begin{defn}[{\cite[Definition 3.1]{Sa3}}] \label{d1.1}
Let $k$ be an infinite field and $L$ be a field containing $k$.
$L$ is called retract $k$-rational, if there are some affine
domain $A$ over $k$ and $k$-algebra morphisms $\varphi: A\to
k[X_1,\ldots,X_n][1/f]$, $\psi:k[X_1,\ldots, X_n][1/f] \to A$
where $k[X_1,\ldots,X_n]$ is a polynomial ring over $k$, $f\in
k[X_1,\ldots,X_n]\backslash \{0\}$, satisfying that

(i) $L$ is the quotient field of $A$, and

(ii) $\psi\circ \varphi=1_A$, the identity map on $A$.
\end{defn}

It is not difficult to see that ``stably $k$-rational" $\Rightarrow$ ``retract $k$-rational" $\Rightarrow$ ``$k$-unirational".
Moreover, if $k$ is an infinite field, it is known that the following three conditions are equivalent:
$k(G)$ is retract $k$-rational, there is a generic $G$-Galois extension over $k$,
there exists a generic $G$-polynomial over $k$ \cite{Sa1,Sa3,De}.

If $k\subset K$ is an extension of fields, the notion of the
unramified Brauer group of $K$ over $k$, denoted by
$\fn{Br}_{v,k}(K)$, was introduced by Saltman \cite{Sa2}. If $k$
is algebraically closed and $K$ is retract $k$-rational, then
$\fn{Br}_{v,k}(K)=0$ (see \cite{Sa2,Sa4}). Using this criterion,
Saltman was able to construct, for any prime number $p$ and any
algebraically closed field $k$ with $\fn{char}k\ne p$, there is
$p$-group $G$ of order $p^9$ such that $\fn{Br}_{v,k}(k(G))\ne 0$.
In particular, $k(G)$ is not retract $k$-rational and therefore is
not $k$-rational \cite{Sa2}; also see the remark after Theorem
\ref{t5.2} of this paper.

The computation of $\fn{Br}_{v,k}(k(G))$ becomes more effective
because of the following theorem.

\begin{theorem}[{Bogomolov, Saltman \cite[Theorem 12]{Bo,Sa5}}] \label{t1.2}
Let $G$ be a finite group, $k$ be an algebraically closed field with $\gcd\{|G|,\fn{char}k\}=1$.
Let $\mu$ denote the multiplicative subgroup of all roots of unity in $k$.
Then $\fn{Br}_{v,k}(k(G))$ is isomorphic to the group $B_0(G)$ defined by
\[
B_0(G)=\bigcap_A \fn{Ker} \{\fn{res}_G^A:H^2(G,\mu)\to H^2(A,\mu)\}
\]
where $A$ runs over all the bicyclic subgroups of $G$ $($a group
$A$ is called bicyclic if $A$ is a cyclic group or a direct
product of two cyclic groups$)$.
\end{theorem}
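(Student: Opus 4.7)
\noindent My plan is to split the claimed identification of $\fn{Br}_{v,k}(k(G))$ with $B_0(G)$ into a geometric step, realising the unramified Brauer group as a subgroup of $H^2(G,\mu)$ cut out by restrictions to \emph{abelian} subgroups, and a cohomological step, reducing ``abelian'' to ``bicyclic''.

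For the geometric step I would exploit the faithful $G$-Galois extension $L/K$ with $K=k(V)^G=k(G)$ and $L=k(V)$. Hilbert 90 gives $H^1(G,L^{*})=0$, so the Hochschild--Serre spectral sequence yields an injection $H^{2}(G,L^{*})\hookrightarrow\fn{Br}(K)$ with image $\fn{Br}(L/K)$. Next I would analyse the short exact sequence of $G$-modules $1\to\mu\to L^{*}\to L^{*}/\mu\to 1$. Passing to a cofinal system of $G$-equivariant smooth birational models of $V$, the quotient $L^{*}/\mu$ becomes a filtered colimit of $G$-permutation modules indexed by prime divisors, so its cohomology decomposes into pieces which match, via residue maps, with the residues at divisorial valuations of $K$ over $k$. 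Intersecting the kernels of these residues isolates a subgroup of $H^{2}(G,\mu)$. To identify this subgroup with $\bigcap_{A \text{ abelian}}\fn{Ker}\{\fn{res}_G^A\}$, I would work locally: for a divisorial $v$ on $K$ and an extension $w$ to $L$ with decomposition group $D_w\le G$ and inertia $I_w$ cyclic (cyclicity uses that $k$ is algebraically closed and $\gcd\{|G|,\fn{char}k\}=1$), a class $\alpha\in H^{2}(G,\mu)$ is unramified at $v$ exactly when $\fn{res}_G^{D_w}(\alpha)$ is inflated from $H^{2}(D_w/I_w,\mu)$, a condition detected on abelian subgroups of $D_w$. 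A standard reduction, using that every abelian subgroup of $G$ fixes a prime divisor on some $G$-equivariant model of $V$ (possible because $V$ is faithful), then shows that every abelian subgroup of $G$ arises, up to conjugation, as such a $D_w$, so the two intersections coincide.

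For the cohomological step, since $\gcd\{|G|,\fn{char}k\}=1$ and $\mu$ is divisible, the universal coefficient theorem together with the vanishing of $\fn{Ext}^{1}(-,\mu)$ on finite abelian groups gives $H^{2}(A,\mu)\cong\fn{Hom}(\wedge^{2}A,\mu)$ for every finite abelian $A\le G$. Under this identification, a class corresponds to an alternating bilinear pairing on $A$, which vanishes iff it vanishes on every pair of elements of $A$, i.e.\ iff its restriction to every bicyclic subgroup of $A$ is trivial. Applying this with $A$ ranging over the abelian subgroups of $G$ upgrades the intersection over abelian subgroups to the intersection over bicyclic subgroups stated in the theorem.

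The main obstacle lies in the geometric step: verifying that every class in $\fn{Br}_{v,k}(K)$ comes from $H^{2}(G,L^{*})$ via the Hochschild--Serre injection, and that contributions of exceptional divisors on higher birational models impose no extra condition beyond restrictions to abelian subgroups. Matching the residue maps on $K$ with their intrinsic descriptions on $L$, compatibly with the permutation decomposition of $L^{*}/\mu$, is the technical heart of the argument, carried out in Bogomolov's original work \cite{Bo} and streamlined by Saltman in \cite{Sa5}, whose treatment I would follow for the details.
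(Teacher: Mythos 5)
The paper does not prove Theorem \ref{t1.2} at all: it is imported from Bogomolov and Saltman (\cite{Bo}; \cite{Sa5}, Theorem 12) and serves only to motivate Definition \ref{d1.3}, so there is no in-paper proof to compare yours against. Judged on its own terms, your outline reproduces the standard architecture of the Bogomolov--Saltman argument: Hilbert 90 and Hochschild--Serre to identify $\fn{Br}(L/K)$ with $H^{2}(G,L^{*})$; the sequence $1\to\mu\to L^{*}\to L^{*}/\mu\to 1$ with $L^{*}/\mu$ a limit of permutation modules; residue computations at divisorial valuations with cyclic, central, tame inertia; and $H^{2}(A,\mu)\cong\fn{Hom}(\wedge^{2}A,\mu)$ to pass from abelian to bicyclic subgroups. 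That last, purely cohomological step is correct exactly as you state it.

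The geometric step, however, contains an assertion that is false as written and would derail the argument: a non-cyclic abelian subgroup of $G$ does not ``fix a prime divisor'' on any smooth $G$-equivariant model, because in the tame situation the inertia group of a divisorial valuation trivial on $k$ is always cyclic; only cyclic subgroups occur as inertia, and it is not true that every abelian subgroup occurs as a decomposition group in a way that detects its full $H^{2}$. What the proof actually needs is different in both directions: for $\fn{Br}_{v,k}(K)\subseteq B_{0}(G)$ one must, for each bicyclic $A=\langle a,b\rangle$, manufacture a divisorial valuation whose residue computes $\fn{res}_{G}^{A}(\alpha)$ (with $\langle a\rangle$ as inertia and $b$ in the decomposition group); for the reverse inclusion one unwinds the residue at an arbitrary $v$ as a class in $H^{1}(D_{w}/I_{w},\fn{Hom}(I_{w},\mu))$ whose vanishing is checked precisely on the bicyclic subgroups $\langle i,d\rangle$ with $i$ generating $I_{w}$ and $d\in D_{w}$. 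Your phrase ``a condition detected on abelian subgroups of $D_{w}$'' does not substitute for this computation, and since you also defer the containment $\fn{Br}_{v,k}(K)\subseteq\fn{image}\{H^{2}(G,\mu)\}$ --- which you rightly call the technical heart --- to \cite{Bo} and \cite{Sa5}, what you have is a reading guide to those sources rather than a proof; that is, in fairness, the same status the theorem has in the paper itself.
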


Because of the above theorem, we define $B_0(G)$ as follows.

\begin{defn} \label{d1.3}
Let $G$ be a finite group, $\bm{Q}/\bm{Z}$ be the
$\bm{Z}[G]$-module with trivial $G$-actions. Define $B_0(G)$ by
\[
B_0(G)=\bigcap_A \fn{Ker} \{\fn{res}_G^A:H^2(G,\bm{Q}/\bm{Z})\to H^2(A,\bm{Q}/\bm{Z})\}
\]
where $A$ runs over all the bicyclic subgroups of $G$ and $\fn{res}_G^A$ denotes the restriction map from $G$ to $A$.
\end{defn}

The cohomology group $H^2(G,\bm{Q}/\bm{Z})$ is called the Schur
multiplier of $G$ in some literature \cite{Kar}. Thus the group
$B_0(G)$, as a subgroup of $H^2(G,\bm{Q}/\bm{Z})$, was called the
Bogomolov multiplier of $G$ by Kunyavskii \cite{Ku}; we follow his
terminology in this paper.

\medskip
When $N$ is a subgroup of $G$, the restriction map
$\fn{res}:H^2(G,\bm{Q}/\bm{Z})\to H^2(N,\bm{Q}/\bm{Z})$ induces a
well-defined map $\fn{res}:B_0(G)\to B_0(N)$. When $N\lhd G$, we
also have a well-defined map $\fn{res}:B_0(G)\to B_0(N)^G$ (see
Lemma \ref{l2.3}). We will prove the following two theorems.

\begin{theorem} \label{t1.4}
Let $G_1$ and $G_2$ be any finite groups.
Then the restriction map $\fn{res}:B_0(G_1\times G_2)\to B_0(G_1)\times B_0(G_2)$ is an isomorphism.
\end{theorem}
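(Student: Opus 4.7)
The plan is to reduce the statement to the Künneth-type decomposition of the Schur multiplier. Dualizing Schur's formula
\[
H_2(G_1 \times G_2, \bm{Z}) \cong H_2(G_1, \bm{Z}) \oplus H_2(G_2, \bm{Z}) \oplus (G_1^{\fn{ab}} \otimes G_2^{\fn{ab}})
\]
(using the injectivity of $\bm{Q}/\bm{Z}$) one obtains a natural direct-sum decomposition
\[
H^2(G_1 \times G_2, \bm{Q}/\bm{Z}) \cong H^2(G_1, \bm{Q}/\bm{Z}) \oplus H^2(G_2, \bm{Q}/\bm{Z}) \oplus \fn{Hom}(G_1^{\fn{ab}} \otimes G_2^{\fn{ab}}, \bm{Q}/\bm{Z}),
\]
under which $\fn{res}_{G_1 \times G_2}^{G_i}$ is the projection onto the $i$-th summand, and inflation along the projection $G_1 \times G_2 \twoheadrightarrow G_i$ provides a splitting. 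The decomposition is natural, so restriction to a product subgroup $H_1 \times H_2$ respects summands term by term.

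For injectivity, suppose $\alpha \in B_0(G_1 \times G_2)$ has trivial image in $B_0(G_1) \times B_0(G_2)$ and write $\alpha = \alpha_1 + \alpha_2 + \beta$ in the above decomposition. The vanishing of the first two components yields $\alpha = \beta \in \fn{Hom}(G_1^{\fn{ab}} \otimes G_2^{\fn{ab}}, \bm{Q}/\bm{Z})$. For arbitrary $g \in G_1$ and $h \in G_2$, the subgroup $\langle g \rangle \times \langle h \rangle$ is bicyclic; since $H^2(C, \bm{Q}/\bm{Z}) = 0$ for every cyclic $C$ (by divisibility of $\bm{Q}/\bm{Z}$), only the third Künneth summand survives for this subgroup, and the restriction of $\beta$ is precisely the homomorphism $\bar g \otimes \bar h \mapsto \beta(\bar g \otimes \bar h)$. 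By hypothesis this vanishes, so $\beta(\bar g \otimes \bar h) = 0$ for all $g, h$; as such pure tensors generate $G_1^{\fn{ab}} \otimes G_2^{\fn{ab}}$, we conclude $\beta = 0$.

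For surjectivity, given $(\gamma_1, \gamma_2) \in B_0(G_1) \times B_0(G_2)$, set $\alpha := \fn{inf}_{G_1}(\gamma_1) + \fn{inf}_{G_2}(\gamma_2) \in H^2(G_1 \times G_2, \bm{Q}/\bm{Z})$, which evidently satisfies $\fn{res}(\alpha) = (\gamma_1, \gamma_2)$. It remains to verify $\alpha \in B_0(G_1 \times G_2)$. Let $A$ be any bicyclic subgroup of $G_1 \times G_2$. Being bicyclic, $A$ is abelian of rank at most $2$, so each projection $\pi_i(A) \le G_i$ is a homomorphic image of $A$ and is again bicyclic; moreover $A \subseteq \pi_1(A) \times \pi_2(A)$. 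Thus it suffices to show $\alpha$ restricts trivially to this product subgroup, and there $\fn{inf}_{G_i}(\gamma_i)$ restricts to the inflation of $\gamma_i|_{\pi_i(A)}$, which vanishes because $\gamma_i \in B_0(G_i)$ and $\pi_i(A)$ is bicyclic.

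The main bookkeeping obstacle is verifying the naturality of the Künneth decomposition with respect to restriction along inclusions $H_1 \times H_2 \hookrightarrow G_1 \times G_2$, and identifying the restriction on the third summand with evaluation on pure tensors; these facts are standard but must be spelled out. The essential group-theoretic input is the elementary observation that a bicyclic subgroup of a direct product sits inside the direct product of its two projections, each of which is again bicyclic.
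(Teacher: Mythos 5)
Your proof is correct, and it is in substance the cohomological dual of the paper's argument, though the machinery is genuinely different. The paper passes through Miller's nonabelian exterior square $G\wedge G$ and Bogomolov's description $B_0(G)\simeq \fn{Hom}(M(G)/M_0(G),\bm{Q}/\bm{Z})$, where $M_0(G)$ is generated by wedges of commuting pairs; Theorem \ref{t2.5}(2) supplies the decomposition of $M(G_1\times G_2)$ (the homological form of your K\"unneth splitting), the inclusion $G_1\wedge G_2\subset M_0(G_1\times G_2)$ plays the role of your injectivity step (restricting to $\langle g\rangle\times\langle h\rangle$), and the expansion $x\wedge y=(g_2\wedge h_1)(g_1\wedge h_1)(g_2\wedge h_2)(g_1\wedge h_2)$ for a commuting pair $x=g_1g_2$, $y=h_1h_2$ is exactly the homological shadow of your observation that a bicyclic $A$ sits inside $\pi_1(A)\times\pi_2(A)$ with bicyclic projections. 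What your route buys is that it stays entirely inside $H^2(-,\bm{Q}/\bm{Z})$ and needs no input beyond the K\"unneth theorem and restriction--inflation functoriality, at the cost of verifying the naturality of the K\"unneth splitting and treating injectivity and surjectivity separately; the paper's route front-loads the identification of $B_0$ with $\fn{Hom}(M(G)/M_0(G),\bm{Q}/\bm{Z})$ and then reduces the whole theorem to a single generation statement for $M_0(G_1\times G_2)$, which is cleaner once that dual description is accepted. Both proofs ultimately hinge on the same elementary fact: a commuting pair in $G_1\times G_2$ decomposes componentwise into two commuting pairs plus cross terms, and the cross terms are killed by bicyclic subgroups of the form $\langle g\rangle\times\langle h\rangle$.
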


\begin{theorem} \label{t1.5}
Let $G=G_1\rtimes G_2$ be a finite group with $G_1\lhd G$. If
$\gcd\{|G_1|,|G_2|\}=1$, then the restriction map
$\fn{res}:B_0(G)\to B_0(G_1)^{G_2}\times B_0(G_2)$ is an
isomorphism.
\end{theorem}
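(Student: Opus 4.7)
The plan is to combine a Schur--Zassenhaus classification of the bicyclic subgroups of $G$ with the cohomological splitting afforded by the coprimality of $|G_1|$ and $|G_2|$.

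\smallskip
First, I want to show that every bicyclic subgroup $A\le G$ is $G$-conjugate to one of the form $A_1\times A_2$ with $A_1\le G_1$ and $A_2\le G_2$ both bicyclic. Since $A$ is abelian and the orders $|G_1|,|G_2|$ are coprime, $A$ decomposes internally as $A=A_1\times A_2'$, where $A_1$ is the $|G_1|$-part and $A_2'$ the $|G_2|$-part. Every element of $A_1$ has order coprime to $|G_2|$ and hence trivial image in $G/G_1=G_2$, so $A_1\le G_1$. Inside $A_2'G_1\le G$, both $A_2'$ and $G_2\cap(A_2'G_1)$ are complements of $G_1$, so Schur--Zassenhaus produces some $h\in A_2'G_1$ with $hA_2'h^{-1}\le G_2$; because $G_1\lhd G$, the conjugate $hAh^{-1}=(hA_1h^{-1})\times(hA_2'h^{-1})$ has the required form and is still bicyclic (its two factors have coprime orders, so combining cyclic factors yields a product of at most two cyclic groups).

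\smallskip
Next, I would exploit the coprimality to split $H^2(G,\bm{Q}/\bm{Z})$. In the Lyndon--Hochschild--Serre spectral sequence for $1\to G_1\to G\to G_2\to 1$, every term $E_2^{p,q}=H^p(G_2,H^q(G_1,\bm{Q}/\bm{Z}))$ with $p,q\ge 1$ is annihilated both by $|G_1|$ and by $|G_2|$, hence vanishes. The sequence collapses to
\[
0\to H^n(G_2,\bm{Q}/\bm{Z})\to H^n(G,\bm{Q}/\bm{Z})\to H^n(G_1,\bm{Q}/\bm{Z})^{G_2}\to 0,
\]
which is split because $G_2\hookrightarrow G\twoheadrightarrow G_2$ is the identity (so $\fn{res}_G^{G_2}$ retracts inflation). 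For $n=2$ this yields
\[
(\fn{res}_G^{G_1},\fn{res}_G^{G_2}):H^2(G,\bm{Q}/\bm{Z})\xrightarrow{\sim}H^2(G_1,\bm{Q}/\bm{Z})^{G_2}\oplus H^2(G_2,\bm{Q}/\bm{Z}),
\]
and the identical argument applied to $A=A_1\times A_2$---or Theorem~\ref{t1.4} itself---gives $H^2(A,\bm{Q}/\bm{Z})\cong H^2(A_1,\bm{Q}/\bm{Z})\oplus H^2(A_2,\bm{Q}/\bm{Z})$ via $(\fn{res}_A^{A_1},\fn{res}_A^{A_2})$.

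\smallskip
Finally, by naturality of restriction the two splittings are compatible: under the identifications above, $\fn{res}_G^A(\gamma,\beta)=(\fn{res}_{G_1}^{A_1}(\gamma),\fn{res}_{G_2}^{A_2}(\beta))$. Intersecting kernels over the bicyclic $A=A_1\times A_2$ from the first step, and using in particular the degenerate choices $A=A_1\times\{e\}$ and $A=\{e\}\times A_2$ to isolate the two components, one reads off $B_0(G)\cong B_0(G_1)^{G_2}\oplus B_0(G_2)$; the $G_2$-invariance of $\gamma$ is automatic because $\gamma$ already lies in $H^2(G_1,\bm{Q}/\bm{Z})^{G_2}$. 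The hard part is the cohomological splitting: one must verify not just the abstract isomorphism but that the decomposition is realized precisely by $\fn{res}_G^{G_1}$ and $\fn{res}_G^{G_2}$, i.e.\ that $\fn{res}_G^{G_1}$ surjects onto the full $G_2$-invariants of $H^2(G_1,\bm{Q}/\bm{Z})$. Once this is in place, Steps 1 and 3 are essentially formal.
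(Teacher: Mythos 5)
Your proof is correct, but it is organized quite differently from the paper's. The paper proves Theorem \ref{t1.5} as a special case of Theorem \ref{t2.6}, working entirely with restriction, corestriction and inflation at the level of the $B_0$-groups: it splits $B_0(G)=B_1\times B_2$ by torsion, shows $\fn{cor}_{G_1}^G$ and $\fn{inf}_{G_2}^G$ descend to well-defined maps into $B_0(G)$, and uses the composites $\fn{cor}\circ\fn{res}$ and $\fn{res}\circ\fn{inf}$ to get bijectivity. Its treatment of a bicyclic $A=\langle x,y\rangle$ is to write $A=\langle x^m,y^m\rangle\times\langle x^n,y^n\rangle$ with the first factor inside $G_1$, and to handle the second factor \emph{without} moving it into $G_2$ (its generators are projected to $G_0$ directly). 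You instead prove the splitting $H^2(G,\bm{Q}/\bm{Z})\simeq H^2(G_1,\bm{Q}/\bm{Z})^{G_2}\oplus H^2(G_2,\bm{Q}/\bm{Z})$ first via the Lyndon--Hochschild--Serre spectral sequence --- which is exactly the paper's \emph{alternative} proof of Theorem \ref{t2.7}, stated there only after Theorem \ref{t2.6} --- and then classify bicyclic subgroups up to conjugacy using Schur--Zassenhaus, reading off $B_0$ by naturality. Your conjugacy step is legitimate (the complement $A_2'$ is abelian, hence solvable, so the conjugacy half of Schur--Zassenhaus applies with no appeal to Feit--Thompson), but you should make explicit the standard fact it silently relies on: conjugation by $h\in G$ acts trivially on $H^*(G,M)$, so $\fn{Ker}(\fn{res}_G^A)=\fn{Ker}(\fn{res}_G^{hAh^{-1}})$ and the intersection over all bicyclic subgroups may be taken over the representatives $A_1\times A_2$. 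What your route buys is conceptual transparency --- the theorem becomes pure naturality once the two splittings are in place; what the paper's route buys is that it never needs to conjugate subgroups, it exhibits the inverse map explicitly as $\fn{cor}+\fn{inf}$, and it delivers the more general statement for $B^q_M(G)$, all $q\ge 1$, in one pass.
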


As an application of Theorem \ref{t1.5}, we will show that, if
$G=N\rtimes G_0$ is a Frobenius group, then $\fn{res}:B_0(G)\to
B_0(N)^{G_0}$ is an isomorphism (see Theorem \ref{t2.8}).

One may suspect that $B_0(G)$ would be ``small" if it is
non-trivial (see the remark at the end of Section 5 of this
paper). However, we will show that, if $n$ is any positive
integer, then there are non-direct-product $p$-groups $G_1$ and
$G_2$ such that $B_0(G_1)$ and $B_0(G_2)$ contain subgroups
isomorphic to $(\bm{Z}/p \bm{Z})^n$ and $\bm{Z}/p^n \bm{Z}$
respectively (see Theorem \ref{t5.2} and Theorem \ref{t5.4}). When
$n=1$, the group constructed in Theorem \ref{t5.2} is the same one
constructed by Saltman and Shafarevich \cite[page 83; Sh, page
245]{Sa2}. For this case, Saltman and Shafarevich show that the
unramified Brauer groups are non-trivial; their proof are
different from our direct proof by showing the non-triviality of
$B_0(G)$.

\medskip
The condition $\gcd\{|G_1|,|G_2|\}=1$ of Theorem \ref{t1.5} is
reminiscent of the following theorem of Saltman.

\begin{theorem}[{Saltman \cite[Theorem 3.1 and Theorem 3.5; Ka1, Theorem 3.5]{Sa1}}] \label{t1.6}
Let $k$ be an infinite field, $G=N\rtimes G_0$ with $N\lhd G$.

{\rm (1)} If $k(G)$ is retract $k$-rational, so is $k(G_0)$.

{\rm (2)} Assume furthermore that $N$ is abelian and $\gcd\{|N|,|G_0|\}=1$.
If both $k(N)$ and $k(G_0)$ are retract $k$-rational, so is $k(G)$.
\end{theorem}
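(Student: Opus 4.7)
The plan is to use the equivalence, noted just after Definition~\ref{d1.1}, that for infinite $k$ the retract $k$-rationality of $k(H)$ is equivalent to the existence of a generic $H$-Galois extension over $k$ (Saltman--DeMeyer). This converts both statements into problems about constructing or deconstructing universal Galois covers along the exact sequence $1 \to N \to G \to G_0 \to 1$.

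For part (1), I would take a generic $G$-Galois extension $S/R$ and form its $N$-invariants $S^N/R$, which is Galois with group $G/N = G_0$ by standard commutative Galois descent. The content is verifying genericity: given a $G_0$-Galois extension $L'/K$ with $K \supset k$, I would build the $G$-Galois \'etale $K$-algebra $M = L' \otimes_K K[N]$, where $N$ acts on $K[N]$ by translation and an element $g_0 \in G_0$ acts diagonally via its Galois action on $L'$ and via the prescribed $G_0$-action on $N$ extended to $K[N]$; a direct computation shows $M^N \cong L'$ as $G_0$-algebras. The generic property of $S/R$ then specializes it to $M$, and passing to $N$-invariants produces the desired specialization $S^N \to L'$ exhibiting $S^N/R$ as a generic $G_0$-extension. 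The main subtlety is that ``generic'' must be understood in the formulation that covers Galois \'etale algebras, not only Galois field extensions; this is the standard definition and presents no real obstacle.

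For part (2) the construction goes in the opposite direction: I would glue generic extensions for $N$ and $G_0$ using the coprimality. Let $T_0/A_0$ and $T_1/A_1$ be generic Galois extensions for $G_0$ and $N$ respectively, existing by hypothesis. Using the given $G_0$-action on the abelian group $N$, I would first twist $T_1/A_1$ to a $G_0$-equivariant generic $N$-extension and then combine it with $T_0/A_0$ via the semi-direct product structure to obtain a $G$-Galois extension $T/A$. Verification of genericity then proceeds in two steps: given a $G$-Galois field extension $L/K$ over $k$, the intermediate field $L^N$ is $G_0$-Galois over $K$, so specializing $T_0/A_0$ realizes $L^N/K$; afterwards $L/L^N$ is $N$-Galois and a further specialization of $T_1/A_1$ realizes it, with the equivariance matching the restriction of the $G$-action on $L$. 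The main obstacle is making these two specializations fit together consistently, and this is precisely where $\gcd\{|N|,|G_0|\}=1$ enters: Schur--Zassenhaus makes all splittings of $G=N\rtimes G_0$ conjugate under $N$, so that the two Galois pieces can be matched up to an $N$-twist, while the vanishing of $H^i(G_0,-)$ on $|N|$-torsion $G_0$-modules for $i\geq 1$ kills the cohomological obstruction to forming the required $G_0$-equivariant twist of $T_1/A_1$.
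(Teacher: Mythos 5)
The paper does not prove Theorem 1.6 at all: it is quoted from Saltman [Sa1], and the preamble to Section 4 explicitly notes that Saltman's proof runs through generic Galois extensions, whereas the paper's own contribution is a different, lattice-theoretic proof of the special case Theorem 4.2 (part (2) under the extra hypothesis $\zeta_e\in k$, which forces $k(N)$ to be rational). Your proposal is therefore Saltman's original route rather than the paper's: you translate retract rationality into the existence of generic Galois extensions via the Saltman--DeMeyer equivalence and work along $1\to N\to G\to G_0\to 1$, while the paper instead constructs the exact sequence of $G_0$-lattices $0\to M\to P\to N^*\to 0$, applies Swan's projectivity criterion (Theorem 3.9) using $\gcd\{|N|,|G_0|\}=1$ to conclude that $[M]^{fl}$ is invertible, and finishes with Theorem 3.7. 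The lattice argument is what generalizes to the ``local'' refinement of Theorem 1.9; the generic-extension argument covers part (1) (which the lattice method does not touch) and requires no roots of unity in $k$, only retract rationality of $k(N)$.

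Two points in your sketch need repair before it is a proof. In part (1), $L'\otimes_K K[N]$ with $N$ acting by translation on the group algebra is not an $N$-Galois algebra: left translation by a group element is not a ring automorphism of $K[N]$ (for $N=C_2$ over $\bm{Q}$ it fixes both primitive idempotents and is not even multiplicative). The induced algebra you want is $\operatorname{Map}_{G_0}(G,L')\cong L'\otimes_K\operatorname{Map}(N,K)$, with $N$ permuting the coordinates of $\operatorname{Map}(N,K)$; note that forming this $G$-algebra uses the splitting $G=N\rtimes G_0$, which is exactly why the theorem is stated for semidirect products rather than arbitrary quotients. In part (2), the construction of the generic $G$-extension $T/A$ itself --- the $G_0$-equivariant twist of the generic $N$-extension and its compatibility with $T_0/A_0$ --- is asserted rather than carried out; this is the technical heart of Saltman's Theorem 3.5, and it is precisely there that both the abelianness of $N$ (so that the relevant obstruction lives in $H^i(G_0,-)$ of an $|N|$-torsion module) and the coprimality are used. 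As it stands, your second paragraph describes how one would verify genericity of $T/A$ if it existed, but does not produce $T/A$.
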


For the convenience of presenting Jambor's Theorem, we give the
following definition.

\begin{defn} \label{d1.7}
Let $G=N\rtimes G_0$ be a finite group where $N$ is an abelian
normal subgroup of $G$. Write $N=\prod_{p\mid |N|} N_p$ where each
$N_p$ is the $p$-Sylow subgroup of $N$. Define $H_p=\{g\in G_0:
g\tau g^{-1}=\tau$ for any $\tau\in N_p\}$.
\end{defn}

With the above definition, we may state Jambor's Theorem, which is
in some sense a local refinement of Saltman's Theorem (see Theorem
\ref{t1.6}).

\begin{theorem}[Jambor \cite{Ja}] \label{t1.8}
Let $G=N\rtimes G_0$ be a finite group where $N$ is an abelian
normal subgroup of $G$. Assume that, for each prime number $p\mid
|N|$, $p$ doesn't divide $[G_0:H_p]$. If $k$ is an infinite field
such that both $k(N)$ and $k(G_0)$ are retract $k$-rational, then
$k(G)$ is also retract $k$-rational.
\end{theorem}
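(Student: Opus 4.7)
My plan is to prove Jambor's theorem by induction on the number $s$ of distinct prime divisors of $|N|$, with the inductive step peeling off one Sylow subgroup of $N$ at a time and reducing to a single-prime base case.

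For the inductive step with $s \geq 2$, pick a prime $p \mid |N|$ and use the Sylow decomposition $N = N_p \times N'$ of the abelian group $N$, where $N' = \prod_{q \neq p,\, q \mid |N|} N_q$. Then $G' := N' \rtimes G_0$ is a subgroup of $G$ with $G = N_p \rtimes G'$. A short computation, using that each $N_q$ is characteristic in $N$ (hence $G_0$-stable) and that $N'$ centralizes $N_p$, gives $C_{G'}(N_q) = N' \rtimes H_q$ for every prime $q \mid |N|$; hence $[G' : C_{G'}(N_q)] = [G_0 : H_q]$, so the local hypothesis of the theorem transfers to the pair $(G', N')$ (with $s-1$ primes) and to the pair $(G, N_p)$ in base-case form. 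Retract rationality of $k(N')$ follows from that of $k(N)$ via Theorem \ref{t1.6}(1) applied to the trivial semidirect product $N = N_p \rtimes N'$; the induction hypothesis then gives $k(G')$ retract $k$-rational, which is the required input for the base-case step applied to $G = N_p \rtimes G'$.

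The base case is $N$ an abelian $p$-group with $H_p = C_{G_0}(N) \lhd G_0$ of $p$-prime index, so $R := G_0/H_p$ is a $p'$-group faithfully embedded in $\mathrm{Aut}(N)$. Setting $\bar G := N \rtimes R$, a direct check shows that $G$ is naturally the fibre product $G_0 \times_R \bar G$ via $(n, g) \mapsto (g, (n, \bar g))$, since the $G_0$-action on $N$ factors through $R$. Because $\gcd(|N|, |R|) = 1$, Theorem \ref{t1.6}(2) applied to $\bar G$ yields $k(\bar G)$ retract $k$-rational as soon as $k(N)$ and $k(R)$ are. The main obstacle is then twofold: (i) to deduce retract rationality of $k(R)$ from that of $k(G_0)$, which I plan to address via a Schur--Zassenhaus-type analysis of $G_0 \twoheadrightarrow R$ (delicate because $|R|$ is only known to be prime to $p$, not coprime to $|H_p|$); and (ii) to combine the retract rationalities of $k(G_0)$ and $k(\bar G)$ through the fibre product $G \cong G_0 \times_R \bar G$, which I would carry out in the language of generic Galois extensions (equivalent to retract rationality by \cite{Sa1,Sa3,De}): match the induced generic $R$-extensions on both sides by specializing parameters, then patch to obtain a generic $G$-extension and hence retract $k$-rationality of $k(G)$.
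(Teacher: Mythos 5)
First, a point of comparison: the paper does not actually prove Theorem \ref{t1.8} --- it is quoted from Jambor, and the paper's own contribution is the variant Theorem \ref{t1.9}, proved by the flabby-lattice method. So your argument must stand on its own, and it does not: the two ``obstacles'' you name at the end of the base case are genuine gaps, not technical loose ends. (a) You need $k(R)$ retract $k$-rational where $R=G_0/H_p$. Theorem \ref{t1.6}(1) only lets you descend retract rationality along a \emph{split} surjection $G\to G_0$, i.e.\ when the target is simultaneously a quotient and a subgroup; the surjection $G_0\to G_0/H_p$ has no reason to split, and Schur--Zassenhaus is unavailable precisely because, as you concede, $\gcd(|H_p|,[G_0:H_p])$ need not be $1$. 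Whether retract rationality of $k(G_0)$ forces that of $k(G_0/H)$ for an arbitrary normal subgroup $H$ is not a known implication, and it is exactly the kind of statement one must not assume. (b) Even granting $k(R)$ and $k(\bar G)$ retract rational, there is no patching theorem for the fibre product $G\cong G_0\times_R\bar G$: a generic $G_0$-extension and a generic $\bar G$-extension whose induced $R$-extensions ``match'' do not obviously glue to a generic $G$-extension, and producing such a gluing is the entire content of theorems of this type (compare the restrictive hypotheses Saltman needs already in Theorem \ref{t1.6}(2)). So the base case is unproved, and with it the whole theorem.

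Your inductive reduction to one prime at a time is correct and harmless (it parallels the decomposition $M=\oplus_{p}M_p$ in Step 9 of the paper's proof of Theorem \ref{t1.9}), and your computation $C_{G'}(N_p)=N'\rtimes H_p$ is fine. The way forward in the base case, however, is to exploit the coprimality $\gcd(p,[G_0:H_p])=1$ at the level of modules rather than of groups: in the paper's proof of Theorem \ref{t1.9} (which assumes $\zeta_e\in k$) the relevant character lattice sits in an exact sequence $0\to M_p\to F_p\to N_p^*\to 0$ of $\bm{Z}[G_0/H_p]$-modules with $F_p$ free and $N_p^*$ killed by a power of $p$ prime to $|G_0/H_p|$, so Swan's theorem (Theorem \ref{t3.9}) makes $M_p$ projective and Theorem \ref{t3.7} concludes; Jambor's own argument likewise manipulates the generic-extension data for $N$ as a $G_0$-module and never passes through the quotient group $R$ as a Galois group. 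I would abandon the fibre-product formulation: it is correct group theory but buys you nothing rationality-theoretic.
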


What we will prove is that the condition that $p\nmid [G_0:H_p]$ can be weakened if we assume that $k$ has enough roots of unity.
Namely,

\begin{theorem} \label{t1.9}
Let $G=N\rtimes G_0$ be a finite group where $N$ is an abelian
normal subgroup of exponent $e$. Assume that, for each prime
number $p\mid |N|$, either $p\nmid [G_0:H_p]$ or all the Sylow
subgroups of $G_0/H_p$ are cyclic groups. If $k$ is an infinite
field such that $\zeta_e \in k$ and $k(G_0)$ is retract
$k$-rational, then $k(G)$ is also retract $k$-rational.
\end{theorem}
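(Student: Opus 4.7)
The plan is to diagonalize the $N$-action on the regular representation of $G$ using $\zeta_e \in k$, reduce the problem to a monomial $G_0$-action on a rational function field over $k(G_0)$, decompose this action prime-by-prime according to the Sylow decomposition of $N$, and dispose of each piece via whichever of the two local hypotheses applies at that prime.

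First I would write $k(G) = k(x_g : g \in G)^G$ and choose a transversal $\{g_1,\ldots,g_m\}$ for $N$ in $G$. Since $N$ is abelian of exponent $e$ and $\zeta_e \in k$, each $N$-stable block $kN \cdot g_j$ of $kG$ splits into one-dimensional $N$-eigenspaces, giving diagonalizing coordinates $y_{\chi,j}$ with $n \cdot y_{\chi,j} = \chi(n)\, y_{\chi,j}$ for $\chi \in \hat N$. Transporting the $G_0$-action to these coordinates yields a monomial action $g_0 \cdot y_{\chi,j} = c_{g_0}(\chi,j)\, y_{g_0\chi,\, g_0 \cdot j}$ with $c_{g_0}(\chi,j) \in \mu_e \subseteq k^\times$. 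Applying Saltman's no-name-type reductions (in the spirit of the proof of Theorem~\ref{t1.6}), retract $k$-rationality of $k(G)$ becomes equivalent to that of $k(G_0)(W)^{G_0}$, where $W$ decomposes as an internal direct sum $\bigoplus_{p \mid |N|} W_p$; here $W_p$ is the monomial $G_0/H_p$-module built from the $G_0$-orbits on $\hat{N_p}$ and the restriction of $c$ to values in $\mu_{p^\infty}$, the splitting being legitimate because $\hat N = \prod_p \hat{N_p}$ is $G_0$-stable factor-by-factor.

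For each prime $p$, exactly one of the two hypotheses disposes of the factor $W_p$. If $p \nmid [G_0 : H_p]$, then a Sylow $p$-subgroup of $G_0$ lies in $H_p$ and acts trivially on $N_p$, so Jambor's argument from Theorem~\ref{t1.8} applies locally at $p$. If instead every Sylow subgroup of $G_0/H_p$ is cyclic, then $G_0/H_p$ is a Z-group, and a short Hochschild-Serre computation for the semidirect product of two coprime cyclic groups gives $H_2(G_0/H_p, \bm Z) = 0$. Since $\mu_e$ is divisible as an abelian group, the universal coefficient theorem then forces $H^2(G_0/H_p, \mu_e) = 0$, so the cocycle defining the $G_0/H_p$-action on $W_p$ is a coboundary; a diagonal change of variables untwists this action into a permutation action, and the corresponding $G_0$-invariants extend $k(G_0)$ purely transcendentally by the no-name lemma. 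Combining the two cases additively over $p$ with the assumed retract $k$-rationality of $k(G_0)$ then yields retract $k$-rationality of $k(G)$.

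The main obstacle will be making the prime-by-prime decomposition rigorous enough for the two untwisting arguments to cooperate. Specifically, one must verify that the cocycle $c$ really splits into classes in $H^2(G_0/H_p, \mu_{p^\infty})$ for each $p$, and that the diagonal untwisting in the Z-group case can be carried out inside the full monomial extension of $k(G_0)$ without disturbing the other prime factors or the retract rationality of $k(G_0)$ assumed at the base. A secondary subtlety is that Jambor's original argument for Theorem~\ref{t1.8} does not assume $\zeta_e \in k$, so one must check that his local reduction still goes through, more easily in fact, under the refined hypothesis here. Once these compatibilities are in place, both local cases follow from standard facts about monomial actions together with the vanishing of Schur multipliers for Z-groups.
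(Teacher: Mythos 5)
Your overall architecture (diagonalize the $N$-action using $\zeta_e\in k$, pass to a $G_0$-action prime by prime over the Sylow decomposition of $N$, and handle each prime by whichever local hypothesis applies) matches the paper's, but two of your key steps do not work as stated. First, the untwisting in the cyclic-Sylow case: $\mu_e$ is a finite cyclic group, not divisible, so the universal coefficient theorem gives $H^2(G_0/H_p,\mu_e)\simeq \fn{Hom}(H_2(G_0/H_p,\bm{Z}),\mu_e)\oplus \fn{Ext}^1((G_0/H_p)^{ab},\mu_e)$, and the $\fn{Ext}$ term is nonzero whenever $\gcd(|(G_0/H_p)^{ab}|,e)>1$; vanishing of the Schur multiplier of a Z-group does not kill it. Worse, the obstruction to rescaling the eigenvectors $y_{\chi,j}$ so that the scalars $c_{g_0}(\chi,j)$ disappear is not a class in $H^2$ of the quotient at all: it is governed, orbit by orbit via Shapiro, by $H^1(\fn{Stab}(\chi,j),k^\times)=\fn{Hom}(\fn{Stab}(\chi,j),k^\times)$, which is nonzero for any nontrivial stabilizer. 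The paper circumvents exactly this by first adjoining the regular representation of $G_0$, so that $G_0$ acts \emph{faithfully} on the coefficient field $K=k(x_0(g):g\in G_0)$, and then invoking the linearization theorem (Theorem \ref{t3.1}, essentially Hilbert 90) to produce $H_p$-invariant eigenvectors $u_i(\chi)$ with coefficients in $K$; after inducing from $H_p$ to $G_0$ the action on the resulting basis is an honest permutation action with no residual cocycle.

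Second, your endgame is missing the lattice-theoretic step that actually uses the two hypotheses. After diagonalization, the $N$-invariant monomials form $K(M)$ where $M=\bigoplus_p M_p$ and $M_p=\fn{Ker}(F_p\to N_p^*)$ for a permutation $G_0$-lattice $F_p$; this kernel is a nontrivial $G_0/H_p$-lattice, and the no-name lemma does not make $K(M)^{G_0}$ purely transcendental over $k(G_0)$. What one must show is that $[M_p]^{fl}$ is invertible: when $p\nmid[G_0:H_p]$ this follows from Swan's projectivity theorem (Theorem \ref{t3.9}) applied to $0\to M_p\to F_p\to N_p^*\to 0$, and in the cyclic-Sylow case it follows from the Endo--Miyata theorem (Theorem \ref{t3.8}); one then concludes by Saltman's criterion (Theorem \ref{t3.6}(2)) that $K(M)^{G_0}$ is retract rational over $K^{G_0}\simeq k(G_0)$, and finally by \cite[Theorem 4.2]{Ka1} that it is retract $k$-rational. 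Your appeal to ``Jambor's argument applies locally'' in the first case and to the no-name lemma in the second leaves both local cases unproved.
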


We remark that our proof of the above theorem, even for the case
of Theorem \ref{t1.8}, is different from Jambor's proof.

The paper is organized as follows. We prove Theorem \ref{t1.4} and
Theorem \ref{t1.5} in Section 2. Note that Theorem \ref{t1.5} is
valid in a more general context. See Theorem \ref{t2.6} (also see
Theorem \ref{t2.7}). Section 3 contains some preliminaries for
proving Theorem \ref{t1.9}. We recall the theory of flabby
lattices in this section. Theorem \ref{t1.9} is proved in Section
4. Section 5 contains two results which show that $B_0(G)$ can be
as big as possible.

\bigskip
Acknowledgements. The subjects studied in this paper arose from
discussions with Yuri G. Prokhorov during the conferences
``Birational geometry and affine geometry" (April 23--27, 2012,
Moscow) and ``Essential dimensions and Cremona groups" (June
11--15, 2012, Nankai University). I thank Prokhorov and the
organizers of these two meetings, especially Ivan Cheltsov.

\begin{idef}{Standing notations.}
In discussing retract rationality, we always assume that the
ground field is infinite (see Definition \ref{d1.1}). For
emphasis, recall $k(G)=k(x_g:g\in G)^G$ defined in the second
paragraph of this section.

We denote by $\zeta_n$ a primitive $n$-th root of unity in some extension field of the ground field $k$.
When we write $\zeta_n\in k$ or $\fn{char}k \nmid n$,
it is understood that either $\fn{char}k=0$ or $\fn{char}k=p>0$ with $p\nmid n$.

All the groups in this paper are finite groups. Both $C_n$ and
$\bm{Z}/n\bm{Z}$ denote the same cyclic group of order $n$; we use
$\bm{Z}/n\bm{Z}$ when it appears as some cohomology group. The
exponent of a group $G$, $\exp (G)$, is defined as
$\exp(G)=\fn{lcm}\{\fn{ord}(g):g\in G\}$. If $G$ is a group,
$Z(G)$ and $[G,G]$ denote the center and the commutator subgroup
of $G$ respectively. If $g,h\in G$, then $[g,h]:=ghg^{-1}h^{-1}$,
and $G^{ab}$ denotes the quotient group $G/[G,G]$. We denote by
$\bm{F}_p$ the finite field with $p$ elements.
\end{idef}

A semidirect product group $G$ is denoted by $G=N\rtimes G_0$
where $N$ is a normal subgroup of $G$. If $\sigma \in N \subset G$
and $g\in G_0 \subset G$, we will write $\side{g}{\sigma}$ for
$g\sigma g^{-1}$.

\section{\boldmath Some properties of $B_0(G)$}

In this paper, if $N$ is a subgroup of $G$ and $M$ is a
$\bm{Z}[G]$-module, we will write $\fn{res}:H^q(G,M)\to H^q(N,M)$
and $\fn{cor}:H^q(N,M)\to H^q(G,M)$ for the restriction map and
the corestriction map of cohomology groups \cite[Chapter 3]{Br}.
If $\alpha$ is a $q$-cocycle of $G$ with coefficients in $M$, we
denote by $[\alpha]$ the cohomology class of $\alpha$, i.e.\
$[\alpha]\in H^q(G,M)$.

\begin{lemma} \label{l2.1}
Let $G$ be a finite group, $M$ be a $\bm{Z}[G]$-module.
If $\{N_1,N_2,\ldots,N_t\}$ is a collection of subgroups of $G$ satisfying that $\gcd\{[G:N_i]:1\le i\le t\}=1$,
then $\fn{res}:H^q(G,M)\to \bigoplus_{1\le i\le t} H^q(N_i,M)$ is injective.
\end{lemma}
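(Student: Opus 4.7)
The plan is to rest the whole argument on the classical formula
\[
\fn{cor}_{N}^{G} \circ \fn{res}_{G}^{N} = [G:N]\cdot \fn{id}_{H^q(G,M)}
\]
valid for any subgroup $N\le G$ (see, e.g., \cite[Chapter III, Proposition 9.5]{Br}). This identity is the standard device for controlling the kernel of a restriction map by the associated index.

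First I would pick an arbitrary class $\alpha\in H^q(G,M)$ in the kernel of the displayed restriction, so $\fn{res}_G^{N_i}(\alpha)=0$ for every $i=1,\ldots,t$. Applying $\fn{cor}_{N_i}^G$ to each of these equalities, the formula above gives $[G:N_i]\cdot \alpha = 0$ for every $i$.

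Then I would invoke the coprimality hypothesis $\gcd\{[G:N_i]:1\le i\le t\}=1$ via Bezout's identity: there exist integers $a_1,\ldots,a_t\in\bm{Z}$ with $\sum_{i=1}^{t} a_i[G:N_i]=1$. Multiplying $\alpha$ by this relation yields
\[
\alpha = \Bigl(\sum_{i=1}^{t} a_i[G:N_i]\Bigr)\alpha = \sum_{i=1}^{t} a_i\bigl([G:N_i]\cdot \alpha\bigr) = 0,
\]
which is precisely injectivity of $\fn{res}:H^q(G,M)\to \bigoplus_i H^q(N_i,M)$.

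There is essentially no obstacle here: the entire argument is a textbook application of the transfer formula combined with Bezout, and does not use anything delicate about the specific module $M$ or the specific subgroups $N_i$ beyond finiteness of $G$ and the coprimality of the indices.
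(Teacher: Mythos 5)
Your argument is correct and is essentially identical to the paper's own proof: both use the identity $\fn{cor}_{N_i}^G\circ\fn{res}_G^{N_i}=[G:N_i]\cdot\fn{id}$ to conclude $[G:N_i]\cdot\alpha=0$ for each $i$, and then kill $\alpha$ via the coprimality of the indices (Bezout). No differences worth noting.
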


\begin{proof}
The proof is the same as that of the collection $\{P_j:1\le j\le s\}$ where $P_j$ is a $p_j$-Sylow subgroup of $G$
with $|G|=p_1^{e_1}p_2^{e_2}\cdots p_s^{e_s}$.
We sketch the proof.
Let $\alpha$ be a $q$-cocycle such that $[\alpha]\in\fn{Ker} (\fn{res})$.
It follows that $\fn{res}_G^{N_i} (\alpha)$ is a coboundary for $1\le i\le t$.
Consider $\fn{cor}:H^q(N_i,M)\to H^q(G,M)$.
We find that $\fn{cor}_{N_i}^G\circ \fn{res}_G^{N_i}(\alpha)$ is a coboundary.
But $\fn{cor}_{N_i}^G\circ \fn{res}_G^{N_i}$ is the multiplication by $[G:N_i]$.
Apply the assumption that $\gcd \{[G:N_i]:1\le i\le t\}=1$. Done.
\end{proof}

\begin{defn} \label{d2.2}
Let $M$ be a $\bm{Z}[G]$-module with trivial $G$-actions.
For any $q\ge 1$, define
\[
B_M^q(G)=\bigcap_A \fn{Ker}\{\fn{res}_G^A:H^q(G,M)\to H^q(A,M)\}
\]
where $A$ runs over all the bicyclic subgroups of $G$. When
$M=\bm{Q}/\bm{Z}$, $B_M^2(G)$ is nothing but $B_0(G)$.
\end{defn}

\begin{lemma} \label{l2.3}
Let $G$ be a finite group, $M$ be a $\bm{Z}[G]$-module with trivial $G$-actions.

{\rm (1)} Let $N$ be a subgroup of $G$.
For any $q\ge 1$, the map $\fn{res}:H^q(G,M)\to H^q(N,M)$ induces a natural map $\fn{res}:B_M^q(G)\to B_M^q(N)$.

{\rm (2)} Let $N$ be a normal subgroup of $G$. The action of $G$
on $H^q(N,M)$ via the conjugation map of $G$ on $N$ induces a
well-defined action on $B_M^q(N)$. Moreover, the image of the map
$\fn{res}:B_M^q(G)\to B_M^q(N)$ defined in (1) is contained in
$B_M^q(N)^G$. In particular, the map $\fn{res}:B_M^q(G)\to
B_M^q(N)^G$ is well-defined.
\end{lemma}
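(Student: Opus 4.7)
The plan is to verify the two claimed naturality statements for $B^q_M(-)$, both of which should reduce to elementary properties of restriction and conjugation in group cohomology.

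For part (1), I would argue by transitivity of restriction. Let $\alpha \in B^q_M(G)$ and let $B$ be any bicyclic subgroup of $N$. Since $B$ is then also a bicyclic subgroup of $G$, the hypothesis on $\alpha$ gives $\fn{res}_G^B(\alpha) = 0$; but $\fn{res}_G^B = \fn{res}_N^B \circ \fn{res}_G^N$, so $\fn{res}_N^B\bigl(\fn{res}_G^N(\alpha)\bigr) = 0$ for every bicyclic $B \subseteq N$. This is precisely the definition of $\fn{res}_G^N(\alpha) \in B^q_M(N)$.

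For part (2) there are two things to check. First, that the $G$-action on $H^q(N,M)$ preserves $B^q_M(N)$, so that the notation $B^q_M(N)^G$ makes sense. The key observation is that conjugation by $g \in G$ permutes the bicyclic subgroups of $N$ (a conjugate of a cyclic group is cyclic, and $N$ being normal keeps us inside $N$). Combining this with the standard compatibility
\[
\fn{res}_N^B(g \cdot \beta) = c_g^*\bigl(\fn{res}_N^{g^{-1}Bg}(\beta)\bigr),
\]
where $c_g^* \colon H^q(g^{-1}Bg,M) \xrightarrow{\sim} H^q(B,M)$ is the isomorphism induced by conjugation, one sees that if $\beta \in B^q_M(N)$ then $g\cdot\beta \in B^q_M(N)$ as well. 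Second, I would invoke the classical fact that inner automorphisms act trivially on group cohomology, which for a normal subgroup $N\lhd G$ implies that the restriction $H^q(G,M) \to H^q(N,M)$ automatically lands in $H^q(N,M)^G$ (the $G$-action factoring through $G/N$). Together with part (1), this shows that $\fn{res}_G^N$ sends $B^q_M(G)$ into $B^q_M(N) \cap H^q(N,M)^G = B^q_M(N)^G$.

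There is no serious obstacle; the whole statement is formal once the standard identities for $\fn{res}$ and conjugation are in place. The mildly careful step is the verification that $B^q_M(N)$ is $G$-stable, for which one must invoke the conjugation/restriction compatibility at the cocycle level and then observe that the bicyclic condition is preserved under conjugation inside the normal subgroup $N$.
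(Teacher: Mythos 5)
Your proof is correct and follows essentially the same route as the paper: part (1) by transitivity of restriction, and part (2) by noting that conjugation permutes the bicyclic subgroups of the normal subgroup $N$ (giving $G$-stability of $B_M^q(N)$) and that inner automorphisms of $G$ act trivially on $H^q(G,M)$, which forces the image of $\fn{res}_G^N$ to lie in $H^q(N,M)^G$. The paper merely carries out the last step explicitly at the cochain level, writing $\side{g}{\alpha}=\alpha+\delta(\beta)$ and restricting to tuples from $N$, but the substance is identical to your argument.
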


\begin{proof}
(1) is obvious. We will prove (2).

Let $\alpha$ be a $q$-cocycle representing $[\alpha]\in B_M^q(N)$.

If $g\in G$, the $q$-cocycle $\side{g}{\alpha}$ is defined as:
$\side{g}{\alpha}(\tau_1,\tau_2,\ldots,\tau_q)=\alpha(g^{-1}\tau_1g,g^{-1}\tau_2g,g^{-1}\tau_qg)$ for any $\tau_1,\tau_2,\ldots,\tau_q\in N$,
because $G$ acts trivially on $M$.
It is easy to see that $\fn{res}_N^A(\side{g}{\alpha})$ is a coboundary for any bicyclic subgroup $A$ of $N$ (since $[\alpha]\in B_M^q(N)$).
Thus $B_0(N)$ is invariant under the action of $G$.

We will show that $\fn{Image}\{\fn{res}:B_M^q(G)\to B_M^q(N)\}\subset B_M^q(N)^G$.

Let $\alpha$ be a $q$-cocycle with $[\alpha]\in B_M^q(G)$. Then
$[\fn{res}_G^N(\alpha)]\in B_M^q(N)$. We will show that
$\side{g}{(\fn{res}_G^N(\alpha))}$ is cohomologous to
$\fn{res}_G^N(\alpha)$ for any $g\in G$.

For any $\tau_1,\ldots,\tau_q\in N$,
$\side{g}{(\fn{res}_G^N(\alpha))}
(\tau_1,\ldots,\tau_q)=\alpha(g^{-1}\tau_1g,g^{-1}\tau_2g,\ldots,g^{-1}\tau_qg)$.

On the other hand, consider the action of $G$ on $H^q(G,M)$ by the
conjugation map. Since $\side{g}{\alpha}$ is cohomologous to
$\alpha$ \cite[page 116]{Se}, it follows that there is a
$(q-1)$-cochain $\beta$ such that
$\side{g}{\alpha}=\alpha+\delta(\beta)$ where $\delta$ is the
differential map. Thus
$\alpha(g^{-1}\sigma_1g,g^{-1}\sigma_2g,\ldots$,
$g^{-1}\sigma_qg)=\alpha(\sigma_1,\sigma_2,\ldots,\sigma_q)+\delta(\beta)(\sigma_1,\ldots,\sigma_q)$
for any $\sigma_1,\ldots,\sigma_q\in G$. Substitute $\sigma_i$ by
$\tau_i\in N$, we get
$\alpha(g^{-1}\tau_1g,\ldots,g^{-1}\tau_qg)=\alpha(\tau_1,\ldots,\tau_q)+\delta(\beta)(\tau_1,\ldots,\tau_q)$.
Hence the result.
\end{proof}

Before proving Theorem \ref{t1.4},
we recall the definition of the exterior product $G\wedge G$ of a non-abelian group $G$.

\begin{defn}[{\cite[page 588; BL, page 316]{Mi}}] \label{d2.4}
Let $G$ be a group. The exterior product $G\wedge G$ is a group
defined by $G\wedge G=\langle g\wedge h: g,h\in G\rangle$ with
relations $g\wedge g=1$, $(g_1g_2)\wedge h=(\side{g_1}{g_2}\wedge
\side{g_1}{h})(g_1\wedge h)$, $g\wedge(h_1h_2)=(g\wedge
h_1)(\side{h_1}{g}\wedge \side{h_1}{h_2})$ for any
$g,h,g_1,g_2,h_1,h_2\in G$ where $\side{g}{h}:=ghg^{-1}$ for any
$g,h\in G$.
\end{defn}

\begin{theorem}[{Miller \cite[pages 592--593; BL, page 316]{Mi}}] \label{t2.5}
Let $G$ be a group, $[G,G]$ its commutator subgroup.

{\rm (1)} Define a group homomorphism $\gamma: G\wedge G\to [G,G]$ by $\gamma(g\wedge h)=ghg^{-1}h^{-1}\in [G,G]$ for any $g,h\in G$.
Define $M(G)=\fn{Ker}(\gamma)$.
Then $M(G)\simeq H_2(G,\bm{Z})$.

{\rm (2)} If $G=G_1\times G_2$, define $\Phi: M(G_1)\times M(G_2)\times (G_1\otimes G_2)\to M(G)$ by $\Phi(g_1\wedge g_2)=g_1\wedge g_2\in M(G)$ for any $g_1,g_2\in G_1$,
$\Phi(h_1\wedge h_2)=h_1 \wedge h_2\in M(G)$ for any $h_1,h_2\in G_2$,
$\Phi(g\otimes h)=g\wedge h \in M(G)$ for any $g\in G_1$, $h\in G_2$ where $G_1\otimes G_2:=(G_1^{ab} \otimes_{\bm{Z}} G_2^{ab})$,
the usual tensor product of abelian groups.
Then $\Phi$ is a group isomorphism.
\end{theorem}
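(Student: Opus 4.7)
The plan is to treat the two parts separately: part~(1) via Hopf's formula for $H_2$, and part~(2) as a consequence of~(1) combined with the K\"unneth formula for integral group homology, together with a verification that the three components of $\Phi$ agree with the K\"unneth decomposition.

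For part~(1), I would fix a free presentation $1\to R\to F\xrightarrow{\pi} G\to 1$ and identify $M(G)$ with the Hopf quotient $(R\cap [F,F])/[F,R]$, known to equal $H_2(G,\bm{Z})$. Three steps are needed. \emph{Step~1} shows that $\Gamma\mapsto \Gamma\wedge \Gamma$ is right exact in the appropriate sense, so that $G\wedge G\cong (F\wedge F)/N$, where $N$ is the normal closure of $\{r\wedge f : r\in R,\, f\in F\}$; this is a routine comparison of the defining relations of $G\wedge G$ and $F\wedge F$. \emph{Step~2}, the key lemma, shows that for a free group $F$ the commutator map $\gamma_F:F\wedge F\to [F,F]$ is an isomorphism; the idea is to pick a free basis $\{x_i\}$ of $F$, reduce every element of $F\wedge F$ to a normal form in the generators $x_i\wedge x_j$ using the defining relations, and match these bijectively with the free commutator generators of $[F,F]$. \emph{Step~3} combines the two identifications: under Step~2, the normal closure $N$ from Step~1 corresponds to $[F,R]$, so $G\wedge G\cong [F,F]/[F,R]$ and $\gamma$ becomes the natural surjection onto $[G,G]=[F,F]R/R$, whose kernel is exactly $(R\cap [F,F])/[F,R]$.

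For part~(2), once~(1) is available, the K\"unneth formula for integral group homology gives
\[
H_2(G_1\times G_2,\bm{Z}) \;\cong\; H_2(G_1,\bm{Z})\oplus H_2(G_2,\bm{Z})\oplus \bigl(G_1^{ab}\otimes_{\bm{Z}} G_2^{ab}\bigr),
\]
the first two summands induced by the inclusions $G_i\hookrightarrow G$ and the third by the external homology product. It then suffices to check that $\Phi$ is well defined and that its three components agree with this decomposition under the identification $M(-)\cong H_2(-,\bm{Z})$. Well-definedness on $M(G_i)$ is the functoriality of $\wedge$. For the tensor factor, the crucial observation is that any $g_1\in G_1$ and $h_2\in G_2$ commute inside $G$: this yields (a) $g_1\wedge h_2\in \ker\gamma = M(G)$, and (b) the two defining relations in Definition~\ref{d2.4} collapse to ordinary bilinearity $g_1\wedge (h_2 h_2')=(g_1\wedge h_2)(g_1\wedge h_2')$ and $(g_1 g_1')\wedge h_2=(g_1\wedge h_2)(g_1'\wedge h_2)$, and moreover force $g_1\wedge [h_2,h_2']=1$ and $[g_1,g_1']\wedge h_2=1$, so the map factors through $G_1^{ab}\otimes G_2^{ab}$. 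Identification with the external K\"unneth summand is then the standard description of the product.

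The main obstacle is Step~2 in part~(1), i.e.\ the isomorphism $\gamma_F:F\wedge F\xrightarrow{\sim}[F,F]$ for a free group $F$. Showing that the $x_i\wedge x_j$ generate $F\wedge F$ is straightforward by induction on word length using the defining relations; the real work is injectivity, which requires ruling out any further relation among these generators. If one is willing to import the vanishing $H_2(F,\bm{Z})=0$ for free $F$ as a separate input, injectivity can be deduced a posteriori from Steps~1 and~3 applied to $F$ itself; otherwise a direct combinatorial normal-form argument using Schreier-type coset representatives is required.
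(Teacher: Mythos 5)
Note first that the paper does not prove Theorem \ref{t2.5} at all: it is imported as a known result of Miller (and Brown--Loday) and used as a black box in the proof of Theorem \ref{t1.4}. So there is no in-paper argument to compare yours against; what follows is an assessment of your proposal on its own terms.

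Your outline is the standard homological route and is correct in structure: Hopf's formula plus right-exactness of $\Gamma\mapsto\Gamma\wedge\Gamma$ reduces part (1) to the single assertion that $\gamma_F:F\wedge F\to[F,F]$ is an isomorphism for free $F$, and part (2) then follows from the K\"unneth formula once the three components of $\Phi$ are matched with the K\"unneth summands. Two points need attention. First, your Step 2 is the entire content of part (1), and neither of your two suggested ways of handling it is yet a proof: the combinatorial normal-form argument is not carried out (and is genuinely delicate, since $[F,F]$ is free on a Schreier basis of conjugated commutators, not on the $x_i\wedge x_j$ alone), while the alternative of ``importing $H_2(F,\bm{Z})=0$ and deducing injectivity from Steps 1 and 3 applied to $F$ itself'' is circular as written, because your Step 3 already presupposes Step 2 in order to identify $(F\wedge F)/N$ with $[F,F]/[F,R]$. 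A non-circular shortcut does exist --- e.g.\ one can check directly that $\fn{Ker}(\gamma)$ is central and that the assignment $g\wedge h\mapsto$ (class of $[\tilde g,\tilde h]$ in $[F,F]/[F,R]$) is well defined on $G\wedge G$, which produces an inverse to the induced map without ever invoking $\gamma_F$ --- but that is a different argument and should be spelled out. Second, in part (2) the phrase ``the standard description of the product'' hides the one substantive verification: that under the identification $M(-)\simeq H_2(-,\bm{Z})$ the homology cross product $G_1^{ab}\otimes G_2^{ab}\to H_2(G_1\times G_2,\bm{Z})$ sends $\bar g\otimes\bar h$ to the class of $g\wedge h$. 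Without this, K\"unneth only gives an abstract isomorphism between two groups of the same shape, not that the specific map $\Phi$ is an isomorphism. This can be settled by a bar-resolution computation (the cross product of two $1$-cycles is $[g|h]-[h|g]$) or by naturality reducing to the universal example $C_\infty\times C_\infty$, but one of these must actually be done. With those two holes filled, the proof is complete; the bilinearity observations you make for the tensor component (commuting elements collapse the exterior relations to bilinearity and kill commutators) are correct and are exactly the computation the paper itself performs, in a special case, in Step 4 of its proof of Theorem \ref{t1.4}.
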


\begin{proof}[Proof of Theorem \ref{t1.4}] ~

Step 1.
For a group $G$, the association of $G$ to the exact sequence $1\to M(G)\to G\wedge G\to [G,G]\to 1$ is a functor,
i.e.\ if $G\to H$ is a group homomorphism,
then there are group homomorphisms $f_1$, $f_2$, $f_3$ such that the following diagram commutes
\[
\xymatrix{1 \ar[r] & M(G) \ar[r] \ar[d]^{f_1} & G\wedge G \ar[r] \ar[d]^{f_2} & [G,G] \ar[r] \ar[d]^{f_3} & 1 \\
1 \ar[r] & M(H) \ar[r] & H \wedge H \ar[r] & [H,H] \ar[r] & 1}
\]

\bigskip
Step 2. Note that $\fn{Hom}(H_2(G,\bm{Z}),\bm{Q}/\bm{Z})\simeq
H^2(G,\bm{Q}/\bm{Z})$ by the universal coefficient theorem
\cite[page 8]{Br}. Since $\fn{Hom}(-,\bm{Q}/\bm{Z})$ is an
anti-equivalence on the category of finitely generated abelian
groups, it follows that $B_0(G)\simeq
\fn{Hom}(M(G)/M_0(G),\bm{Q}/\bm{Z})$ where $M_0(G)=\langle x\wedge
y\in G\wedge G: xy=yx\rangle$. See \cite[page 471 and 475]{Bo}
also.

\bigskip
Step 3. By Step 2, it suffices to show that $M(G_1)/M_0(G_1)
\times M(G_2)/M_0(G_2) \to M(G)/M_0(G)$ is a group isomorphism.

Apply Theorem \ref{t2.5}. Since $\Phi(G_1\otimes G_2)=G_1\wedge
G_2 \subset M_0(G)$, it follows that the induced morphism $M(G_1)
\times M(G_2) \to M(G)/\langle G_1\wedge G_2 \rangle$ is an
isomorphism. This isomorphism sends $M_0(G_1)$ and $M_0(G_2)$ to
$M_0(G)$. This induces an epimorphism $\Phi^* : M(G_1)/M_0(G_1)
\times M(G_2)/M_0(G_2) \to M(G)/M_0(G)$.

\bigskip
Step 4. We claim that $\Phi^*$ is injective.

It suffices to show that $M_0(G)$ is generated by $G_1\wedge G_2$
and the images of $M_0(G_1)$, $M_0(G_2)$.

For any $x,y\in G=G_1\times G_2$ with $xy=yx$, write $x=g_1g_2$,
$y=h_1h_2$ where $g_1,h_1\in G_1$, $g_2,h_2\in G_2$. Since all the
$g_i$, $h_j$ commute with each other. Apply the defining relations
in Definition \ref{d2.4}. It is easy to see that $x\wedge
y=x\wedge (h_1h_2)=(x\wedge h_1)(\side{h_1}{x}\wedge
\side{h_1}{h_2})=((g_1g_2)\wedge h_1))(\side{h_1}{(g_1g_2)}\wedge
h_2) =\cdots=(g_2\wedge h_1)(g_1\wedge h_1)(g_2\wedge
h_2)(g_1\wedge h_2)$. Note that $g_2\wedge h_1,g_1\wedge h_2 \in
G_1\wedge G_2$, $g_1\wedge h_1\in \fn{Image} \{M_0(G_1)\to
M_0(G)\}$, $g_2\wedge h_2\in \fn{Image}\{M_0(G_2)\to M_0(G)\}$.
Done.
\end{proof}

For the proof of Theorem \ref{t1.5},
note that Theorem \ref{t1.5} is a special case of the following theorem.

\begin{theorem} \label{t2.6}
Let $G=N\rtimes G_0$ be a finite group, $M$ be a
$\bm{Z}[G]$-module with trivial $G$-actions. If
$\gcd\{|N|,|G_0|\}=1$, then the restriction map $\fn{res}:
B_M^q(G)\to B_M^q(N)^{G_0} \times B_M^q(G_0)$ is an isomorphism
where $q\ge 1$ and $B_M^q(G)$ is the group defined in Definition
\ref{d2.2}.
\end{theorem}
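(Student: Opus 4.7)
The plan is to establish the two directions of the restriction map separately: injectivity is essentially immediate from Lemma \ref{l2.1}, while surjectivity requires a construction via the Lyndon-Hochschild-Serre spectral sequence followed by a careful verification using Schur-Zassenhaus.

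For injectivity, since $[G:N]=|G_0|$ and $[G:G_0]=|N|$ are coprime, Lemma \ref{l2.1} already gives that $\fn{res}:H^q(G,M)\to H^q(N,M)\oplus H^q(G_0,M)$ is injective, a fortiori its restriction to $B_M^q(G)$. The image lies in $B_M^q(N)^{G_0}\times B_M^q(G_0)$ by Lemma \ref{l2.3}, completing the injectivity half.

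For surjectivity, I would first establish the full-cohomology decomposition
\[
(\fn{res}_G^{G_0},\fn{res}_G^N):H^n(G,M)\xrightarrow{\sim}H^n(G_0,M)\oplus H^n(N,M)^{G_0}.
\]
In the Hochschild-Serre spectral sequence $E_2^{p,q}=H^p(G_0,H^q(N,M))\Rightarrow H^{p+q}(G,M)$, every term with $p,q\ge 1$ is simultaneously $|G_0|$-torsion and $|N|$-torsion, hence vanishes; the same coprime torsion argument also annihilates the transgression $d_2:E_2^{0,1}\to E_2^{2,0}$. This leaves, for every $n$, a short exact sequence $0\to H^n(G_0,M)\xrightarrow{\fn{inf}}H^n(G,M)\xrightarrow{\fn{res}_G^N}H^n(N,M)^{G_0}\to 0$, which splits because $G_0\hookrightarrow G\twoheadrightarrow G_0$ is the identity, forcing $\fn{res}_G^{G_0}\circ\fn{inf}=\fn{id}$. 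Given $([\beta],[\alpha_0])\in B_M^q(N)^{G_0}\times B_M^q(G_0)$, this decomposition produces a unique $[\gamma]\in H^q(G,M)$ with $\fn{res}_G^N[\gamma]=[\beta]$ and $\fn{res}_G^{G_0}[\gamma]=[\alpha_0]$.

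The real work is showing $[\gamma]\in B_M^q(G)$: for any bicyclic subgroup $A\subseteq G$ one needs $\fn{res}_G^A[\gamma]=0$. Set $A_N=A\cap N$; its order divides $|N|$ while $|A/A_N|$ divides $|G_0|$, so these orders are coprime, and since $A$ is abelian Schur-Zassenhaus supplies a decomposition $A=A_N\times A'$ with both factors bicyclic. Applying Lemma \ref{l2.1} inside $A$ to the coprime-index pair $\{A_N,A'\}$ reduces the task to $\fn{res}_G^{A_N}[\gamma]=0$ and $\fn{res}_G^{A'}[\gamma]=0$. The first is clear: $\fn{res}_G^{A_N}[\gamma]=\fn{res}_N^{A_N}[\beta]=0$, using $[\beta]\in B_M^q(N)$ and that $A_N$ is bicyclic in $N$. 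For the second, $A'$ lies in $G$ but generally not in $G_0$; however $A'':=NA'\cap G_0$ is a complement to $N$ in $NA'$ of order $|A'|$, so Schur-Zassenhaus (using that $A'$ is abelian, hence solvable) conjugates $A'$ to $A''$ by some $g\in NA'$. Since inner conjugation acts trivially on $H^q(G,M)$ (because $M$ has trivial $G$-action), the standard conjugation-equivariance of restriction identifies $\fn{res}_G^{A'}[\gamma]$ with $\fn{res}_G^{A''}[\gamma]=\fn{res}_{G_0}^{A''}[\alpha_0]=0$, the last equality because $A''\subseteq G_0$ is bicyclic. The main technical obstacle will be coordinating these two uses of Schur-Zassenhaus and keeping the conjugation identification compatible with restriction, so that an arbitrary bicyclic subgroup of $G$ is reduced cleanly to a bicyclic subgroup of $N$ together with a bicyclic subgroup genuinely inside $G_0$.
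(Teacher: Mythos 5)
Your argument is correct, but the surjectivity half takes a genuinely different route from the paper's. The paper does not start from the full decomposition $H^q(G,M)\simeq H^q(G_0,M)\oplus H^q(N,M)^{G_0}$ (that is its Theorem \ref{t2.7}, recorded afterwards as a byproduct); instead it splits $B_M^q(G)=B_1\times B_2$ by coprime torsion and builds explicit inverses, a corestriction $\fn{cor}:B_M^q(N)^{G_0}\to B_1$ and an inflation $\fn{inf}:B_M^q(G_0)\to B_2$, each of which must be shown to land inside the Bogomolov-type subgroup. That verification uses the same Hall decomposition of a bicyclic subgroup that you use --- the paper's $A_1=\langle x^m,y^m\rangle\subseteq N$ and $A_2=\langle x^n,y^n\rangle$ coincide with your $A_N=A\cap N$ and $A'$ --- but it disposes of the $A_2$-part without ever moving $A_2$ into $G_0$: for $\fn{cor}$ a coprime-torsion count kills $\fn{res}_G^{A_2}\circ\fn{cor}_N^G$, and for $\fn{inf}$ a direct cochain computation identifies $\fn{res}_G^{A_2}\circ\fn{inf}_{G_0}^G(\gamma)$ with the pullback of $\fn{res}_{G_0}^{A_0}(\gamma)$ along projection to $G_0$-components. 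Your replacement --- produce the candidate class $[\gamma]$ from the spectral-sequence isomorphism and then conjugate $A'$ onto the complement $A''=NA'\cap G_0$ by the conjugacy part of Schur--Zassenhaus (legitimate since $A'$ is abelian, hence solvable), using that inner conjugation acts trivially on $H^q(G,M)$ --- is a clean class-level argument that handles the $G_0$-direction in one stroke, at the cost of invoking conjugacy of complements, which the paper's more computational, cochain-level treatment avoids. Both proofs are complete; yours is arguably more conceptual, the paper's more self-contained. (Two small points: for abelian $A$ the splitting $A=A_N\times A'$ is just the primary decomposition, no Schur--Zassenhaus needed there; and in the spectral sequence you should note that all higher differentials out of $E_r^{0,n}$, not only $d_2$ on $E_2^{0,1}$, die by the same coprime-torsion argument.)
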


\begin{proof}

Step 1. By Lemma \ref{l2.3}, the map $\fn{res}: B_M^q(G)\to
B_M^q(N)^{G_0} \times B_M^q(G_0)$ is well-defined because
$B_M^q(N)^G=B_M^q(N)^{G_0}$.

Write $|N|=n$, $|G_0|=m$.
Then $n\cdot B_M^q(N)=m\cdot B_M^q(G_0)=0$.
We will show that the corestriction map $\fn{cor}_N^G:H^q(N,M)\to H^q(G,M)$ induces a well-defined map $\fn{cor}:B_M^q(N)^{G_0} \to B_M^q(G)$.

\medskip
For any $q$-cocycle $\gamma$ such that $[\gamma]\in B_M^q(N)^{G_0}$,
we will show that $[\fn{cor}_N^G(\gamma)]\in B_M^q(G)$,
i.e.\ $\fn{res}_G^A\circ \fn{cor}_N^G(\gamma)$ is a coboundary for any bicyclic subgroup $A$ of $G$.

Write $A=\langle x,y \rangle \subset G$.
Define $A_1=\langle x^m, y^m\rangle$, $A_2=\langle x^n,y^n\rangle$.
Then $A=A_1\times A_2$ and $A_1$ (resp.\ $A_2$) is an abelian group of exponent dividing $n$ (resp.\ $m$).
Since $\gcd\{n,m\}=1$ and $N\lhd G$,
it follows that $A_1\subset N$.

By Lemma \ref{l2.1}, the map $\fn{res}:H^q(A,M)\to
H^q(A_1,M)\times H^q(A_2,M)$ is injective. To show that
$\fn{res}_G^A\circ \fn{cor}_N^G(\gamma)$ is a coboundary, it
suffices to show that both $\fn{res}_A^{A_1}\circ
\fn{res}_G^A\circ \fn{cor}_N^G(\gamma)$ and $\fn{res}_A^{A_2}\circ
\fn{res}_G^A\circ \fn{cor}_N^G(\gamma)$ are coboundaries.

\medskip
Note that $\fn{res}_A^{A_2}\circ \fn{res}_G^A\circ \fn{cor}_N^G(\gamma)=\fn{res}_G^{A_2}\circ \fn{cor}_N^G(\gamma)$ and $[\fn{res}_G^{A_2}\circ \fn{cor}_N^G(\gamma)]\in H^q(A_2,M)$ is of order dividing $|A_2|$.
On the other hand, $[\gamma]\in H^q(N,M)$ is of order dividing $|N|$.
Since $\gcd\{|N|,|A_2|\}=1$,
we find $[\fn{res}_G^{A_2}\circ \fn{cor}_N^G(\gamma)]=0$.

Now $\fn{res}_A^{A_1}\circ \fn{res}_G^A \circ
\fn{cor}_N^G(\gamma)=\fn{res}_N^{A_1}\circ \fn{res}_G^N\circ
\fn{cor}_N^G(\gamma)$ because $A_1\subset N$. By \cite[page
81]{Br}, $\fn{res}_G^N \circ \fn{cor}_N^G(\gamma)=\sum_{g\in G_0}
\side{g}{\gamma}$. Since $[\gamma]\in H^q(N,M)^{G_0}$, we find
that $\fn{res}_G^N\circ \fn{cor}_N^G(\gamma)=m\gamma$. Thus
$\fn{res}_A^{A_1}\circ \fn{res}_G^A\circ
\fn{cor}_N^G(\gamma)=m\cdot \fn{res}_N^{A_1}(\gamma)$. Since
$\gamma\in B_M^q(N)$, it follows that $\fn{res}_N^{A_1}(\gamma)$
is a coboundary. Done.

\bigskip
Step 2.
Since $\gcd\{n,m\}=1$,
we may write $B_M^q(G)=B_1\times B_2$ where $B_1=\{[\gamma]\in B_M^q(G): n[\gamma]=0\}$,
$B_2=\{[\gamma]\in B_M^q(G):m[\gamma]=0\}$.

Consider the restriction map $\fn{res}:B_M^q(G)\to
B_M^q(N)^{G_0}\times B_M^q(G_0)$. Note that $\fn{res}(B_1)\subset
B_M^q(N)^{G_0}$, $\fn{res}(B_2)\subset B_M^q(G_0)$. Hence it
remains to show that both $\fn{res}:B_1\to B_M^q(N)^{G_0}$ and
$\fn{res}:B_2\to B_M^q(G_0)$ are isomorphisms.

\bigskip
Step 3.
Consider the map $\fn{res}:B_1\to B_M^q(N)^{G_0}$ first.

Note that the following diagram commutes
\[
\xymatrix{B_1\ar[r]^{\fn{res}} \ar[d]_{f_1} & B_M^q(N)^{G_0} \ar[d]^{f_2} \\
H^q(G,M) \ar[r]_{\fn{res}_G^N} & H^q(N,M) }
\]
where $f_1$ and $f_2$ are inclusion maps. Since the composite map
$H^q(G,M) \xrightarrow{\fn{res}_G^N} H^q(N,M)$
$\xrightarrow{\fn{cor}_N^G} H^q(G,M)$ is the multiplication by $m$
map \cite[page 82]{Br}, it follows that $\fn{cor}_N^G\circ
\fn{res}_G^N: B_1\to H^q(G,M)$ is injective. In particular,
$\fn{res}_G^N:B_1\to H^q(N,M)$ and $\fn{res}:B_1\to B_M^q
(N)^{G_0}$ are injective maps.

\medskip
By Step 1, the corestriction map $\fn{cor}:B_M^q(N)^{G_0} \to B_M^q(G)$ is well-defined.
Since $n\cdot B_M^q(N)^{G_0}=0$,
we find that $\fn{cor}:B_M^q(N)^{G_0}\to B_1$ is also well-defined.
Consider the commutative diagram
\[
\xymatrix{B_M^q(N)^{G_0} \ar[r]^{\fn{cor}} \ar[d]_{f_2} & B_1 \ar[d]^{f_1} \\
H^q(N,M) \ar[r]_{\fn{cor}_N^G} & H^q(G,M) }
\]

The composite map $H^q(N,M)\xrightarrow{\fn{cor}_N^G} H^q(G,M)\xrightarrow{\fn{res}_G^N} H^q(N,M)$ is the norm map by \cite[page 81]{Br},
i.e.\ $\fn{res}_G^N\circ \fn{cor}_N^G([\gamma])=\sum_{g\in G_0} [\side{g}{\gamma}]$ for any $[\gamma]\in H^q(N,M)$.
Since $[\side{g}{\gamma}]=[\gamma]$ for any $[\gamma]\in B_M^q(N)^{G_0}$,
it follows that $\fn{res}_G^N\circ \fn{cor}_N^G: B_M^q(N)^{G_0}\to B_M^q(N)^{G_0}$ is the multiplication by $m$ map.
For any $[\gamma]\in B_M^q(N)^{G_0}$,
find $[\gamma']\in B_M^q(N)^{G_0}$ such that $[\gamma]=m[\gamma']=\fn{res}\circ \fn{cor}([\gamma'])=\fn{res}(\fn{cor}([\gamma']))$.
Hence $\fn{res}:B_1\to B_M^q(N)^{G_0}$ is surjective.

\bigskip
Step 4. Consider the map $\fn{res}:B_2\to B_M^q(G_0)$.

The injectivity of $\fn{res}:B_2\to B_M^q(G_0)$ follows also from
the similar fact that the composite map
$H^q(G,M)\xrightarrow{\fn{res}_G^{G_0}} H^q(G_0,M)
\xrightarrow{\fn{cor}_{G_0}^G} H^q(G,M)$ is the multiplication by
$n$ map. The details are omitted.

For the surjectivity of $\fn{res}:B_2\to B_M^q(G_0)$, we will show
that the inflation map $\inf_{G_0}^G: H^q(G_0,M)\to H^q(G,M)$
induces a well-defined map $\inf:B_M^q(G_0)\to B_M^q(G)$; here we
identify $G_0$ with $G/N$.

Note that $\fn{inf}_{G_0}^G: H^q(G_0,M)\to H^q(G,M)$ is defined
as: for any $q$-cocycle $\gamma$ with $[\gamma]\in H^q(G_0,M)$,
for any $x_1=g_1h_1$, $x_2=g_2h_2$, $\ldots$, $x_q=g_qh_q$ where
$g_1,g_2,\ldots,g_q\in N$, $h_1,h_2,\ldots,h_g\in G_0$, the
cocycle $\inf(\gamma)$ is defined by
$\inf(\gamma)(x_1,\ldots,x_q)=\gamma(h_1,h_2,\ldots,h_q)$.

Assume that $\inf: B_M^q(G_0)\to B_M^q(G)$ is well-defined, whose
proof will be given in Step 5. Then $\inf: B_M^q(G_0)\to B_2$ is
also well-defined. Consider the following commutative diagram
\[
\xymatrix{B_M^q(G_0) \ar[r]^{\inf} \ar[d]_{f_3} & B_2 \ar[r]^{\fn{res}} \ar[d]^{f_4} & B_M^q(G_0) \ar[d]^{f_3} \\
H^q(G_0,M) \ar[r]_{\inf_{G_0}^G} & H^q(G,M) \ar[r]_{\fn{res}_G^{G_0}} & H^q(G_0,M) }
\]
where $f_3$, $f_4$ are the inclusion maps.

Since the composite of the bottom row is the identity map,
so is that of the top row.
Hence $\fn{res}: B_2\to B_M^q(G_0)$ is surjective.

\bigskip
Step 5.
It remains to show that $\inf_{G_0}^G:H^q(G_0,M)\to H^q(G,M)$ induces a well-defined map $\inf:B_M^q(G_0)\to B_M^q(G)$.

For any bicyclic subgroup $A$ of $G$,
for any $[\gamma]\in B_M^q(G_0)$,
we will show that $\fn{res}_G^A\circ \inf_{G_0}^G ([\gamma])=0$.

Write $A=\langle x,y\rangle$, $A_1=\langle x^m,y^m\rangle$, $A_2=\langle x^n,y^n\rangle$.
Applying Lemma \ref{l2.1} as in Step 1,
we will show that the images of $[\gamma]$ in $H^q(G_0,M)\xrightarrow{\inf_{G_0}^G} H^q(G,M) \xrightarrow{\fn{res}_G^{A_1}} H^q(A_1,M)$ and
$H^q(G_0,M)\xrightarrow{\inf_{G_0}^G} H^q(G,M)\xrightarrow{\fn{res}_G^{A_2}} H^q(A_2,M)$ are zero.

Since $m\cdot H^q(G_0,M)=0=n\cdot H^q(A_1,M)$, the composite map
$H^q(G_0,M)\to H^q(G,M)\to H^q(A_1,M)$ is the zero map. This
solves the case of $A_1$.

\medskip
For the case of $H^q(A_2,M)$, it requires some work. Recall
$A=\langle x,y\rangle$ is bicyclic. Write $x=g_1h_1$, $y=g_2h_2$
where $g_1,g_2\in N$, $h_1,h_2\in G_0$. It is routine to verify
that $x^n=x_1h_1^n$, $y^n=y_1h_2^n$ for some $x_1,y_1\in N$, which
are irrelevant in our subsequent proof. Similarly, for any
non-negative integers $a$, $b$, we find
$(x^n)^a(y^n)^b=uh_1^{na}h_2^{nb}$ for some $u\in N$.

Now consider $\fn{res}_G^{A_2}\circ \inf_{G_0}^G(\gamma)$.
For any $(x^n)^{a_1} (y^n)^{b_1},\ldots,(x^n)^{a_q} (y^n)^{b_q}\in A_2=\langle x^n, y^n\rangle$ where $a_i$, $b_i$ are any non-negative integers,
we find
\begin{align*}
& \left(\fn{res}_G^{A_2}\circ \fn{inf}_{G_0}^G(\gamma)\right) \left((x^n)^{a_1} (y^n)^{b_1},\ldots,(x^n)^{a_q}(y^n)^{b_q}\right) \\
={} & \gamma(h_1^{na_1} h_2^{nb_1},\ldots,h_1^{na_q}h_2^{nb_q}) \\
={} & \fn{res}_{G_0}^{A_0} (\gamma) (h_1^{na_1} h_2^{nb_1}, \ldots, h_1^{na_q} h_2^{nb_q})
\end{align*}
where $A_0=\langle h_1^n,h_2^n \rangle \subset G_0$.

Since $\gamma\in B_M^q(G_0)$, we find that $\fn{res}_{G_0}^{A_0}(\gamma)$ is a coboundary.
\end{proof}

\begin{idef}{Remark.}
Tahara studies the Schur multiplier $M(G)$ of a finite group
$G=N\rtimes G_0$ \cite[page 33]{Ta,Kar}. In case
$\gcd\{|N|,|G_0|\}=1$, Tahara's formula becomes $M(G)\simeq
M(N)^{G_0}\times M(G_0)$, which resembles the formula in Theorem
\ref{t2.6}. Such a formula is valid for $H^q(G,M)$ with $q \ge 1$;
this fact was observed by Karpilovsky for $q=2$ \cite[page
35]{Kar}. We record it as follows.
\end{idef}

\begin{theorem} \label{t2.7}
Let $G=N\rtimes G_0$ be a finite group, $M$ be a
$\bm{Z}[G]$-module with trivial $G$-actions. If
$\gcd\{|N|,|G_0|\}=1$, then the restriction map
$\fn{res}:H^q(G,M)\to H^q(N,M)^{G_0}\times H^q(G_0,M)$ is an
isomorphism for all $q\ge 1$.
\end{theorem}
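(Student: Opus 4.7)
The proof is a streamlined version of that of Theorem \ref{t2.6}: because we no longer need to verify that various constructions preserve a subgroup $B_M^q$, only the decomposition into primary components and the standard identities relating $\fn{res}$, $\fn{cor}$, and $\inf$ come into play.

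Write $n=|N|$ and $m=|G_0|$, so $\gcd(n,m)=1$ and $|G|=nm$. Since $H^q(G,M)$ is annihilated by $|G|$ (apply $\fn{cor}\circ\fn{res}=|G|$ to the trivial subgroup), it splits canonically as $H^q(G,M)=H_1\oplus H_2$, where $H_1$ is the $n$-torsion subgroup and $H_2$ is the $m$-torsion subgroup. Likewise $H^q(N,M)$ is entirely $n$-torsion and $H^q(G_0,M)$ is entirely $m$-torsion. The same cocycle computation as in the proof of Lemma \ref{l2.3}(2) shows that $\fn{res}_G^N$ maps into the $G_0$-invariants $H^q(N,M)^{G_0}$. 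Consequently $\fn{res}(H_1)\subset H^q(N,M)^{G_0}$ and $\fn{res}(H_2)\subset H^q(G_0,M)$, while the cross components vanish for order reasons. It therefore suffices to establish that
\[
\fn{res}:H_1\longrightarrow H^q(N,M)^{G_0}\quad\text{and}\quad \fn{res}:H_2\longrightarrow H^q(G_0,M)
\]
are both isomorphisms.

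For the first map, injectivity follows from $\fn{cor}_N^G\circ\fn{res}_G^N=[G:N]=m$ on $H^q(G,M)$, which is invertible on the $n$-torsion group $H_1$. For surjectivity, $\fn{res}_G^N\circ\fn{cor}_N^G$ acts on $H^q(N,M)$ as $[\gamma]\mapsto\sum_{g\in G_0}[\side{g}{\gamma}]$ by \cite[page 81]{Br}, hence as multiplication by $m$ on $H^q(N,M)^{G_0}$; since $m$ is invertible on that $n$-torsion group, every $[\gamma]\in H^q(N,M)^{G_0}$ is of the form $\fn{res}(\fn{cor}([\gamma']))$ for a suitable $[\gamma']$.

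For the second map, injectivity is symmetric: $\fn{cor}_{G_0}^G\circ\fn{res}_G^{G_0}=[G:G_0]=n$ on $H^q(G,M)$, and $n$ is invertible on $H_2$. For surjectivity, the splitting gives an identification $G_0\xrightarrow{\sim}G/N$ under which the composite $G_0\hookrightarrow G\twoheadrightarrow G/N$ is the identity on $G_0$; applying $H^q(-,M)$ yields $\fn{res}_G^{G_0}\circ\inf_{G_0}^G=\fn{id}$ on $H^q(G_0,M)$, so inflation furnishes a section, and its image (being $m$-torsion) lands in $H_2$. No step is a genuine obstacle once the primary decomposition $H^q(G,M)=H_1\oplus H_2$ is written down; everything reduces to the standard identities in the functoriality of group cohomology, and the proof is strictly easier than that of Theorem \ref{t2.6}.
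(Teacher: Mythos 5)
Your proof is correct and is essentially the paper's own argument: the paper states that Theorem \ref{t2.7} is implicit in the proof of Theorem \ref{t2.6}, and what you have written is exactly that proof with the $B_M^q$ bookkeeping stripped away --- primary decomposition $H^q(G,M)=H_1\oplus H_2$, injectivity via $\fn{cor}\circ\fn{res}=[G:\cdot]$, surjectivity onto $H^q(N,M)^{G_0}$ via the norm formula for $\fn{res}_G^N\circ\fn{cor}_N^G$, and surjectivity onto $H^q(G_0,M)$ via the inflation section. (The paper also sketches an alternative via the Hochschild--Serre spectral sequence, but your route matches its primary one.)
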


\begin{proof}
This result is implicit in the proof of Theorem \ref{t2.6}. The
details of the proof are omitted.

An alternative proof is to use the Hochschild-Serre spectral
sequence $E_2^{p,q}=H^p(G_0,$ $H^q(N,M)) \Rightarrow H^n(G, M)$.
Since $\gcd\{|N|,|G_0|\}=1$, it follows that $E_2^{p,q}=0$ (if $p
\ge 1$ and $q \ge 1$) and $E_2^{0,1} \to E_2^{2,0}$ is the zero
map. It follows that $E_2^{p,q}=E_3^{p,q}= \cdots
=E_{\infty}^{p,q}$. Hence we get a short exact sequence $0 \to
E_{\infty}^{n,0} \to H^n(G,M) \to E_{\infty}^{0,n} \to 0$, which
splits because $\gcd\{|N|,|G_0|\}=1$. Note that
$E_{\infty}^{n,0}=H^n(G_0, M^N)=H^n(G_0, M)$ because $G$ acts
trivially on $M$. The map $E_{\infty}^{n,0}=H^n(G_0, M^N) \to
H^n(G,M)$ is the inflation map.
\end{proof}

\begin{idef}{Remark.}
Let $G=N\rtimes G_0$. Assume that $N$ is a perfect group (but
without assuming $\gcd\{|N|,|G_0|\}=1$), Kunyavskii proves that
$M(G)\to M(N)^{G_0}\times M(G_0)$ is an isomorphism \cite[page
212]{Ku}.
\end{idef}

\bigskip
We give an application of Theorem \ref{t1.5} (or Theorem
\ref{t2.6}) to Frobenius groups. Recall that a Frobenius group $G$
is a semidirect product $G=N\rtimes G_0$ satisfying that, if
$\sigma \in N\backslash \{1\}$ and $g\in G_0\backslash \{1\}$,
then $g\sigma g^{-1}\ne \sigma$. In particular,
$\gcd\{|N|,|G_0|\}=1$. The subgroup $N$ is called the Frobenius
kernel of $G$, the subgroup $G_0$ is a Frobenius complement of $G$
(see, for example, \cite[pages 181--182; Ka2]{Is}).

\begin{theorem} \label{t2.8}
Let $G=N\rtimes G_0$ be a finite Frobenius group with kernel $N$ and complement $G_0$.
Then the restriction map $\fn{res}:B_0(G)\to B_0(N)^{G_0}$ is an isomorphism.
\end{theorem}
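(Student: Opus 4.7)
The plan is to reduce immediately to the vanishing statement $B_0(G_0)=0$, and then exploit the severely restricted structure of Frobenius complements.

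First I would observe that a Frobenius group $G=N\rtimes G_0$ automatically satisfies $\gcd\{|N|,|G_0|\}=1$: every non-identity element of $G_0$ acts on $N\setminus\{1\}$ without fixed points, so the $G_0$-orbits on $N\setminus\{1\}$ all have size $|G_0|$, forcing $|G_0|$ to divide $|N|-1$. Consequently Theorem \ref{t1.5} (equivalently Theorem \ref{t2.6} with $M=\bm{Q}/\bm{Z}$ and $q=2$) furnishes an isomorphism
\[
\fn{res}: B_0(G) \xrightarrow{\sim} B_0(N)^{G_0} \times B_0(G_0),
\]
and composing with projection onto the first factor would give the claim, provided I can show that the second factor is trivial.

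To establish $B_0(G_0)=0$, I would invoke the classical Burnside--Zassenhaus structure theorem: every Sylow subgroup of a Frobenius complement is either cyclic or a generalized quaternion $2$-group (see \cite{Is}). For cyclic $p$-groups the Schur multiplier $H^2(-,\bm{Q}/\bm{Z})$ is trivially zero, and for generalized quaternion groups $Q_{2^n}$ it is a standard fact that $H^2(Q_{2^n},\bm{Q}/\bm{Z})=0$ as well. A routine transfer argument using $\fn{cor}\circ \fn{res}=[G_0:P]$ (a special case of Lemma \ref{l2.1}) then shows that for each Sylow $p$-subgroup $P\le G_0$ the restriction map embeds the $p$-primary component of $H^2(G_0,\bm{Q}/\bm{Z})$ into $H^2(P,\bm{Q}/\bm{Z})$. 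Since the target vanishes for every prime $p$, summing gives $H^2(G_0,\bm{Q}/\bm{Z})=0$, and hence $B_0(G_0)\subseteq H^2(G_0,\bm{Q}/\bm{Z})$ is zero, as required.

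I do not anticipate any genuine obstacle: the argument is essentially a two-line consequence of Theorem \ref{t1.5} combined with classical facts about Frobenius complements. The only point requiring a reference is the Burnside--Zassenhaus classification of the Sylow subgroups of a Frobenius complement; since this is standard, it needs no new work beyond citing \cite{Is}.
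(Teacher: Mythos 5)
Your proposal is correct, and it shares the paper's overall skeleton (coprimality of $|N|$ and $|G_0|$, then Theorem \ref{t1.5}, then the vanishing of $B_0(G_0)$), but it establishes the key fact $B_0(G_0)=0$ by a genuinely different and more elementary route. The paper proves $B_0(G_0)=0$ by showing that $\bm{C}(G_0)$ is retract $\bm{C}$-rational, which rests on the classification of $GZ$-groups and several rationality theorems from \cite{Ka2}; the triviality of $B_0(G_0)$ is then extracted via the unramified Brauer group (Theorem \ref{t1.2}). You instead prove the stronger statement that the full Schur multiplier $H^2(G_0,\bm{Q}/\bm{Z})$ vanishes: by the Burnside--Zassenhaus structure theorem every Sylow subgroup of a Frobenius complement is cyclic or generalized quaternion, both of which have trivial Schur multiplier, and Lemma \ref{l2.1} applied to the collection of all Sylow subgroups (whose indices have gcd $1$) then forces $H^2(G_0,\bm{Q}/\bm{Z})=0$. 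This is self-contained modulo classical group theory and avoids the retract-rationality machinery entirely; what it does not give you is the retract $\bm{C}$-rationality of $\bm{C}(G_0)$ itself, which the paper's argument delivers as a byproduct and which is of independent interest in this circle of problems. All the ingredients you invoke are sound: the orbit argument for $\gcd\{|N|,|G_0|\}=1$, the vanishing of $H_2$ for cyclic and generalized quaternion groups, and the transfer/restriction injectivity on $p$-primary components.
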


\begin{proof}
A Frobenius complement is a $GZ$-group by \cite[Theorem 1.4,
Definition 1.5]{Ka2}. The classification of such $GZ$-groups are
given in \cite[Theorem 2.7, Theorem 2.8, Theorem 2.9]{Ka2}.
Applying \cite[Theorem 4.7, Theorem 4.10, Theorem 4.11]{Ka2}, we
find that $\bm{C}(G_0)$ is retract $\bm{C}$-rational. In
particular, $B_0(G_0)=0$. Now apply Theorem \ref{t1.5}.
\end{proof}

\section{The lattice method}

In this section we recall several preliminaries which will be used in the proof of Theorem \ref{t1.9}.

\begin{theorem}[{\cite[Theorem 1]{HK2}}] \label{t3.1}
Let $L$ be any field and $G$ be a finite group acting on $L(x_1,\ldots,x_m)$,
the rational function field of $m$ variables over the field $L$.
Suppose that

{\rm (i)} for any $\sigma \in G$, $\sigma(L)\subset L$,

{\rm (ii)} the restriction of the action of $G$ to $L$ is faithful;

{\rm (iii)} for any $\sigma\in G$,
\[
\begin{pmatrix} \sigma(x_1) \\ \vdots \\ \sigma(x_m) \end{pmatrix}
=A(\sigma) \begin{pmatrix} x_1 \\ \vdots \\ x_m \end{pmatrix}+B(\sigma)
\]
where $A(\sigma)\in GL_m(L)$ and $B(\sigma)$ is an $m\times 1$ matrix over $L$.

Then there is a matrix $(\alpha_{ij})_{1\le i,j \le m} \in
GL_m(L)$ and $\beta_i \in L$ for $1\le i\le m$ such that, by
defining
\[
z_i:=\sum_{1\le j\le m} \alpha_{ij}x_j+\beta_i \quad \text{for }
1\le i\le m,
\]
we have $L(x_1,\ldots,x_m)=L(z_1,\ldots,z_m)$ and
$\sigma(z_i)=z_i$ for all $\sigma\in G$, for all $1\le i\le m$.

Moreover, if $B(\sigma)=0$ for all $\sigma \in G$,
then we may choose $\beta_1=\beta_2=\cdots=\beta_m=0$ in the above definition of $z_1,\ldots,z_m$.
\end{theorem}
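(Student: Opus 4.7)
The plan is to view the affine action on $x_1,\dots,x_m$ as a semilinear action on a finite-dimensional $L$-vector space and then descend via Speiser's Lemma (the multiplicative form of Hilbert 90, $H^1(G,GL_n(L))=1$).

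First, by hypothesis (ii) the group $G$ acts faithfully on $L$ by field automorphisms, so by Artin's theorem the extension $L/L^G$ is Galois with Galois group $G$. Introduce the $L$-subspace
\[
W \;=\; L x_1 + L x_2 + \cdots + L x_m + L\cdot 1 \;\subset\; L(x_1,\dots,x_m).
\]
Hypothesis (iii) says $\sigma(x_i)=\sum_j A(\sigma)_{ij}x_j+B(\sigma)_i$ with entries in $L$, and of course $\sigma(1)=1$, so $W$ is stable under $G$. The restriction of the $G$-action to $W$ is $L$-semilinear, that is, $\sigma(\lambda w)=\sigma(\lambda)\sigma(w)$ for $\lambda\in L$, $w\in W$, since $\sigma$ is a field automorphism of $L(x_1,\dots,x_m)$ extending an automorphism of $L$.

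Second, I would invoke Speiser's Lemma: for any Galois extension $L/K$ with group $G$ and any finite-dimensional $L$-vector space $W$ carrying a semilinear $G$-action, the fixed subspace $W^G$ is a $K$-form of $W$, i.e.\ $\dim_K W^G=\dim_L W$ and the natural map $L\otimes_K W^G\to W$ is an isomorphism. Apply this to $K=L^G$ and the $W$ above, which has $\dim_L W$ equal to $m$ or $m+1$ depending on whether $1\in Lx_1+\cdots+Lx_m$; in any case $\dim_{L^G} W^G = \dim_L W$. Crucially, $1\in W^G$, so $L^G\cdot 1\subset W^G$. Extend $\{1\}$ to an $L^G$-basis $\{1,z_1,\dots,z_m\}$ of $W^G$ (this is possible by counting dimensions in the case where $1\notin Lx_1+\cdots+Lx_m$; if $1$ already lies in $Lx_1+\cdots+Lx_m$, a small adjustment is needed, but generically this is automatic since the $x_i$ are independent variables). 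By descent, $\{1,z_1,\dots,z_m\}$ is also an $L$-basis of $W$.

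Third, write out $z_i=\sum_j\alpha_{ij}x_j+\beta_i$ for $\alpha_{ij},\beta_i\in L$. The change-of-basis matrix from $\{x_1,\dots,x_m,1\}$ to $\{z_1,\dots,z_m,1\}$ is the block matrix
\[
\begin{pmatrix} (\alpha_{ij}) & \beta \\ 0 & 1 \end{pmatrix},
\]
and its invertibility forces $(\alpha_{ij})\in GL_m(L)$. Since $z_1,\dots,z_m$ together with the $x_i$ generate the same $L$-algebra (each $x_i$ is an $L$-linear combination of $z_1,\dots,z_m,1$), one has $L(x_1,\dots,x_m)=L(z_1,\dots,z_m)$, and by construction $\sigma(z_i)=z_i$ for every $\sigma\in G$. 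For the ``moreover'' clause, if $B(\sigma)=0$ for all $\sigma$, the homogeneous subspace $W_0=Lx_1+\cdots+Lx_m$ is already $G$-stable; applying Speiser's Lemma directly to $W_0$ produces an $L^G$-basis $z_1,\dots,z_m$ of $W_0^G$, which are $L$-linear combinations of the $x_i$ with no constant term, giving $\beta_i=0$.

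The only real obstacle is verifying Speiser's Lemma in the form I need, but this is standard (one reduces to $H^1(G,GL_n(L))=1$ by picking any $L$-basis, transporting the semilinear action to a $1$-cocycle $\sigma\mapsto M(\sigma)\in GL_n(L)$, and trivializing it to produce the desired $G$-invariant basis); the rest of the argument is elementary linear algebra, and the case distinction for whether $1$ is linearly dependent on the $x_i$ is a cosmetic issue handled by choosing the basis of $W^G$ to contain $1$.
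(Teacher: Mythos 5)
Your proof is correct and is essentially the argument of the cited source [HK2] (the paper itself states this theorem without reproducing a proof): one linearizes the affine action on the $L$-span $W=L\cdot 1+Lx_1+\cdots+Lx_m$, notes the action is semilinear over the Galois extension $L/L^G$, and descends by Speiser's Lemma ($H^1(G,GL_n(L))=1$). The only cosmetic remark is that your case distinction is vacuous: since $x_1,\ldots,x_m$ are algebraically independent over $L$, the elements $1,x_1,\ldots,x_m$ are automatically $L$-linearly independent, so $\dim_L W=m+1$ always and $\{1\}$ extends to an $L^G$-basis of $W^G$ with no adjustment needed.
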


\begin{theorem}[{Saltman \cite[Proposition 3.6; Ka1, Lemma 3.4]{Sa3}}] \label{t3.2}
Let $k\subset L$ be infinite fields, $L(x_1,\ldots,x_m)$ be the
rational function field of $m$ variables over $L$. Then $L$ is
retract $k$-rational if and only if so is $L(x_1,\ldots,x_m)$.
\end{theorem}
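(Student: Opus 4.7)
The forward direction is straightforward: given an affine $k$-domain $A$ witnessing retract $k$-rationality of $L$, with retract maps $\varphi: A \to k[X_1,\ldots,X_n][1/f]$ and $\psi: k[X_1,\ldots,X_n][1/f] \to A$ satisfying $\psi \varphi = 1_A$, I would form the polynomial extension $A[T_1,\ldots,T_m]$ (whose fraction field is $L(x_1,\ldots,x_m)$) and extend $\varphi, \psi$ by sending $T_i \leftrightarrow Y_i$ for new polynomial variables $Y_i$. The composition $\psi\varphi$ acts trivially on the new generators, so remains the identity.

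The reverse direction is the substantive one. Suppose $L(x_1,\ldots,x_m) = \fn{Frac}(B)$ is retract $k$-rational via $B \xrightarrow{\varphi} k[Y_1,\ldots,Y_N][1/g] \xrightarrow{\psi} B$ with $\psi \varphi = 1_B$. The plan is to specialize the $x_i$'s to generic values $a_i \in k$. A preliminary observation: for any nonzero $b \in B$, we have $\varphi(b) \neq 0$ in $k[Y][1/g]$ (since $\psi \varphi(b) = b$); hence $B$ may be localized at $b$ while simultaneously localizing $k[Y][1/g]$ at $\varphi(b)$. Writing $\varphi(b) = p/g^n$ with $p \in k[Y]$, the enlarged target equals $k[Y][1/(gp)]$, still a localization of a polynomial ring at a single element, and the retract identity extends by sending $1/\varphi(b) \mapsto 1/b$. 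Since $\fn{Frac}(B) = L(x_1,\ldots,x_m)$ is finitely generated over $k$, so is $L/k$; I pick a finitely generated $k$-subalgebra $A_0 \subset L$ with $\fn{Frac}(A_0) = L$, and use the above localization flexibility to arrange $A_0 \subset B$, $x_1,\ldots,x_m \in B$, and $B = A_0[x_1,\ldots,x_m][1/Q]$ for some nonzero $Q \in A_0[x_1,\ldots,x_m]$.

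Next, since $k$ is infinite and $Q \neq 0$ in $L[x]$, a standard inductive argument on $m$ yields $a = (a_1,\ldots,a_m) \in k^m$ with $Q(a) \neq 0$ in $A_0$. Set $A := A_0[1/Q(a)]$, a finitely generated $k$-domain with $\fn{Frac}(A) = L$. After one further localization of $B$ at $Q(a)$ (and of the polynomial ring at $\varphi(Q(a))$), the specialization $\sigma: B \to A$ defined by $x_i \mapsto a_i$ and identity on $A \subset B$ is a well-defined $k$-algebra homomorphism with $\sigma|_A = 1_A$. Defining $\Phi := \varphi|_A$ and $\Psi := \sigma \circ \psi$, I obtain
\[
\Psi \circ \Phi = \sigma \circ \psi \circ \varphi|_A = \sigma \circ 1_B|_A = 1_A,
\]
exhibiting $L$ as retract $k$-rational.

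The main technical obstacle is the bookkeeping of successive localizations: each enlargement of $B$ must be accompanied by a compatible further localization of the polynomial ring that keeps it of the required form $k[Y][1/g']$, and the specialization point $a$ must simultaneously avoid the zero loci of all the relevant denominators in $A_0[x]$. Both issues are handled by the infinitude of $k$ and the observation that $\varphi$ carries nonzero elements of $B$ to nonzero elements of $k[Y][1/g]$.
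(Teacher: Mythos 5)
Your argument is correct, and it is essentially the standard specialization proof: the paper itself gives no proof of Theorem 3.2, citing Saltman [Sa3, Proposition 3.6] and [Ka1, Lemma 3.4], and those sources prove the nontrivial direction exactly as you do, by cutting $B$ down to the form $A_0[x_1,\ldots,x_m][1/Q]$, using injectivity of $\varphi$ to keep the retract data intact under localization, and specializing the $x_i$ at a $k$-point avoiding the relevant denominators (which exists because $k$ is infinite). No gaps.
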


\begin{defn} \label{d3.3}
Let $G$ be a finite group. A finitely generated $\bm{Z}[G]$-module
$M$ is called a $G$-lattice if it is a free abelian group when the
$G$-action is forgotten. A $G$-lattice $M$ is called a permutation
lattice if it can be written as $M=\bigoplus_{1\le i\le n}
\bm{Z}\cdot e_i$ such that, for all $\sigma\in G$ and for all
$1\le i\le n$, $\sigma\cdot e_i=e_j$ (where $j$ depends on
$\sigma$ and $i$). A $G$-lattice $M$ is called an invertible
lattice if there is some $G$-lattice $M'$ such that $M\oplus M'$
is a permutation lattice. A $G$-lattice $M$ is called a flabby
lattice (or, a flasque lattice) if $H^{-1}(H,M)=0$ for all
subgroups $H$ of $G$ (note that $H^{-1}(G, M)$ is the Tate
cohomology). In the category of flabby $G$-lattices, we introduce
a similarity relation: Two $G$-lattices $M_1$ and $M_2$ are
similar if and only if $M_1\oplus P\simeq M_2\oplus Q$ for some
permutation lattices $P$ and $Q$. The equivalence class containing
$M$ is denoted by $[M]$.
\end{defn}

The theory of flabby lattices was developed by Swan, Endo and Miyata, Voskresenskii, Lenstra, Colliot-Th\'el\`ene and Sansuc, etc.
The reader can find an account of this theory in \cite{Sw1,Lo}.
Also see \cite{Sw4,Le}.

\begin{defn} \label{d3.4}
It is known that, for any $G$-lattice $M$, there is an exact
sequence of $\bm{Z}[G]$-modules $0\to M \to P\to E\to 0$ where $P$
is a permutation lattice and $E$ is a flabby lattice \cite[Lemma
8.5]{Sw1}. We define $[M]^{fl}=[E]$. Note that $[M]^{fl}$ is
well-defined. For, if $0\to M\to Q\to F\to 0$ is another
resolution where $Q$ is permutation and $F$ is flabby, then
$P\oplus F\simeq Q\oplus E$ \cite[Lemma 8.7]{Sw1}. Thus $[E]=[F]$.

In particular, $[M]^{fl}=0$ if and only if there is a exact
sequence of $G$-lattices $0\to M\to P\to Q\to 0$ where $P$ and $Q$
are permutation lattices. In case $[M]^{fl}=[E]$ for some
invertible lattice $E$, we will say that $[M]^{fl}$ is invertible
by abusing the notation.
\end{defn}

Note that $[M]^{fl}$ is denoted by $\rho(M)$ in \cite[page 33]{Sw1}.
We adopt the notation of Lorenz \cite[page 38]{Lo}.

\begin{defn} \label{d3.5}
Let $G$ be a finite group, $M$ be a $G$-lattice with
$M=\bigoplus_{1\le i\le n} \bm{Z}\cdot e_i$, and $K$ be a field.
We define $K(M)=K(x_1,\ldots,x_n)$ the rational function field of
$n$ variables over $K$ where $n=\fn{rank}_{\bm{Z}}M$. Suppose that
$G$ has an action on $K$ with $K^G=k$ (it is permissible that $G$
acts faithfully on $K$ or $G$ acts trivially on $K$). We define a
$G$-action on the field $K(M)$ by $\sigma (x_j)=\prod_{1\le i\le
n} x_i^{a_{ij}}$ if $\sigma\cdot e_j =\sum_{1\le i\le n}a_{ij}
e_i$ in $M$; the action of $G$ on the base field $K$ is the
prescribed $G$-action on $K$. The fixed field of $K(M)$ under this
$G$-action is denoted by $K(M)^G$.
\end{defn}

In other words, if $G$ acts trivially on $K$, i.e.\ $k=K$, the
$G$-action on $K(M)=k(x_1,\ldots,x_n)$ is called a purely monomial
action of $G$ in \cite{HK1}. If $G$ acts faithfully on $K$, i.e.\
$G\simeq \fn{Gal}(K/k)$, then $K(M)^G$ is the function field of an
algebraic torus over $k$, split by $K$ with character module $M$
\cite{Vo}. In general, the action of $G$ on $K(M)$ is called a
purely quasi-monomial action in \cite[Definition 1.1]{HKK}.

\begin{theorem} \label{t3.6}
Let $K/k$ be a finite Galois extension of fields, $G=\fn{Gal}(K/k)$, and $M$ be a $G$-lattice.
\leftmargini=8mm
\begin{enumerate}
\item[{\rm (1)}] {\rm (Lenstra \cite[page 304]{Le})}
$K(M)^G$ is stably $k$-rational if and only if $[M]^{fl}=0$.
\item[{\rm (2)}] {\rm (Saltman \cite[page 189]{Sa3})}
$K(M)^G$ is retract $k$-rational if and only $[M]^{fl}$ is invertible.
\end{enumerate}
\end{theorem}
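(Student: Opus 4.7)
The plan is to reduce both equivalences to the following \emph{no-name lemma}: if $0\to M\to N\to P\to 0$ is an exact sequence of $G$-lattices with $P$ a permutation lattice, then $K(N)^G$ is purely transcendental over $K(M)^G$ of transcendence degree $\fn{rank}_{\bm{Z}} P$. To prove the lemma, lift a permutation basis of $P$ to elements of $N$, obtaining coordinates on $K(N)$ on which $G$ acts by permutation up to a $1$-cocycle with values in $K(M)^{\times}$; the vanishing of $H^1(G,K(M)^{\times})$ (Hilbert~90 applied to the Galois extension $K(M)/K(M)^G$) then absorbs the cocycle into new $G$-invariant coordinates $y_1,\ldots,y_r$.

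Granting the no-name lemma, part~(1)~$(\Leftarrow)$ is immediate: choose a flabby resolution $0\to M\to P\to Q\to 0$ with $P$, $Q$ permutation; one gets $K(P)^G=K(M)^G(y_1,\ldots,y_r)$ with $r=\fn{rank}_{\bm{Z}} Q$, and $K(P)^G$ is itself $k$-rational by decomposing the permutation lattice $P$ into $G$-orbits and invoking the normal basis theorem on each induced summand. For part~(2)~$(\Leftarrow)$, write $[M]^{fl}=[E]$ with $E$ invertible and $E\oplus E'$ permutation; starting from a flabby resolution $0\to M\to P\to E\to 0$ with $P$ permutation, perform an auxiliary direct-sum construction with $E'$ that produces a resolution whose cokernel is permutation. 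The no-name lemma then realizes $K(M)^G$ as a direct factor, up to rational extensions, of a $k$-rational field, and unwinding this factorization on the affine presenting algebras produces the maps $\varphi,\psi$ required by Definition~\ref{d1.1}.

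The main obstacle is the converse in both parts, which requires the \emph{birational invariance} of the flabby class. For part~(1), Lenstra's argument shows that a stable $k$-isomorphism $K(M)^G\sim K(M')^G$ forces $[M]^{fl}=[M']^{fl}$; specializing to $M'=0$ then yields $[M]^{fl}=0$ whenever $K(M)^G$ is stably $k$-rational. For part~(2), Saltman's approach characterizes retract rationality in terms of the existence of a versal $G$-torsor over an open subset of affine space, and then translates this geometric lifting property into the splitting of a flabby resolution of $M$ up to an invertible direct summand. This translation between versal torsors and the purely lattice-theoretic invariant $[M]^{fl}$ is the technical heart of the cited proofs of Lenstra~\cite{Le} and Saltman~\cite{Sa3}, and it is where a from-scratch argument would need to invest the most work.
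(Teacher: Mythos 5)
The paper does not prove Theorem \ref{t3.6} at all: it is quoted as a known result of Lenstra and Saltman and used as a black box, so there is no in-paper argument to compare yours against. Judged on its own terms, your sketch of the two sufficiency directions is the standard and correct route: the no-name lemma (proved via Hilbert 90 for $K(M)/K(M)^G$, which is legitimate here because $G=\fn{Gal}(K/k)$ acts faithfully) applied to $0\to M\to P\to Q\to 0$ with $P,Q$ permutation gives stable rationality when $[M]^{fl}=0$, and the variant $0\to M\to P\oplus E'\to E\oplus E'\to 0$ with $E\oplus E'$ permutation gives retract rationality when $[M]^{fl}=[E]$ is invertible (the last step, extracting the maps $\varphi,\psi$ of Definition \ref{d1.1} from the direct-summand presentation, needs a careful choice of the localizing element $f$, but the idea is right).

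The genuine gap is in the two necessity directions, which are the actual content of the cited theorems. For (1) you invoke ``birational invariance of the flabby class'' and for (2) a translation between versal torsors and invertibility of $[M]^{fl}$, but in both cases you describe what Lenstra and Saltman prove rather than proving it: nothing in the proposal establishes that a stable $k$-isomorphism $K(M)^G\sim K(M')^G$ forces $[M]^{fl}=[M']^{fl}$, nor that retract rationality of $K(M)^G$ forces $[M]^{fl}$ to be invertible. You acknowledge this yourself in the final sentence. Since the paper also treats these facts as external citations, your proposal is no weaker than the paper's own treatment, but it is not a self-contained proof of the equivalences: the ``only if'' halves remain an appeal to \cite{Le} and \cite{Sa3} rather than an argument.
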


\begin{theorem}[{Kang \cite[Theorem 5.4]{Ka1}}] \label{t3.7}
Let $G$ be a finite group, $M$ be a faithful $G$-lattice, i.e.\
for any $\sigma \in G\backslash \{1\}$ $\sigma\cdot v\ne v$ for
some $v\in M$. Let $G$ act on $k(M)$ by $k$-automorphisms. Then
$k(G)$ is retract $k$-rational if and only if so is $k(M)^G$ for
any faithful $G$-lattice $M$ (resp.\ for some faithful $G$-lattice
$M$) satisfying that $[M]^{fl}$ is invertible.
\end{theorem}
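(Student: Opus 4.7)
The plan is to use the auxiliary field $L := k(M \oplus \bm{Z}[G])^G$ as a bridge, relating it to $k(M)^G$ via the no-name lemma (Theorem \ref{t3.1}) and to $k(G)$ via the flabby lattice criterion (Theorem \ref{t3.6}). The field $L$ admits two simultaneous descriptions: as an extension of $k(M)^G$ by $|G|$ new $k(M)$-linearly permuted variables, and as $K(M)^G$ for the finite Galois extension $K := k(\bm{Z}[G])$ over $K^G = k(G)$ of group $G$.

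First I would apply Theorem \ref{t3.1} with coefficient field $L_0 := k(M)$---the $G$-action on $L_0$ is faithful because $M$ is a faithful $G$-lattice---and $|G|$ new variables $y_g$ permuted by $G$ according to the regular representation $\bm{Z}[G]$. Since the action on the $y_g$'s is $L_0$-linear with zero translation part, Theorem \ref{t3.1} produces $G$-fixed algebraically independent $z_1,\ldots,z_{|G|}$ with $L_0(y_g : g \in G) = L_0(z_g : g \in G)$. Since $L_0/L_0^G = k(M)/k(M)^G$ is a Galois extension with group $G$, the fixed field is
\[
L = L_0(z_g)^G = k(M)^G(z_1,\ldots,z_{|G|}),
\]
so $L$ is purely transcendental over $k(M)^G$, and by Theorem \ref{t3.2}, $k(M)^G$ is retract $k$-rational if and only if $L$ is.

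From the second description $L = K(M)^G$ together with the hypothesis that $[M]^{fl}$ is invertible, Theorem \ref{t3.6}(2) gives that $L$ is retract $k(G)$-rational. The forward direction ($k(G)$ retract $k$-rational $\Rightarrow k(M)^G$ retract $k$-rational) then follows by transitivity of retract rationality: $L$ is retract over $k(G)$, $k(G)$ is retract over $k$ by hypothesis, so $L$ is retract $k$-rational, and then the first step delivers retract $k$-rationality of $k(M)^G$.

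For the reverse direction, the special case $M = \bm{Z}[G]$ (a faithful permutation lattice, so $[\bm{Z}[G]]^{fl} = 0$ is trivially invertible) gives $k(M)^G = k(G)$, settling the ``for some'' version immediately. The main obstacle is the reverse direction of the ``for any'' version: from $k(M)^G$ retract $k$-rational (equivalently, $L$ retract $k$-rational) one needs to descend retract $k$-rationality to $k(G)$, which is not automatic since retract $k$-rationality is not inherited by subfields in general. To handle this I would combine the explicit $k(G)$-algebra section underlying the retract $k(G)$-rationality of $L$ furnished by Theorem \ref{t3.6}(2) with the retract $k$-rational structure on $L$, and then specialize via a generic point of the $k(G)$-algebra to extract the required retract $k$-rational structure on $k(G)$ itself; this specialization argument is where the proof becomes most technical.
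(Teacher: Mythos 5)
First, a point of reference: the paper does not prove this statement itself --- it is imported verbatim from [Ka1, Theorem 5.4] --- so your proposal has to stand on its own. Your forward direction does: the two descriptions of $L=k(M\oplus\bm{Z}[G])^G$, namely purely transcendental over $k(M)^G$ via Theorem \ref{t3.1} (faithfulness of $M$ is exactly what makes hypothesis (ii) there hold), and equal to $K(M)^G$ with $K=k(\bm{Z}[G])$ finite Galois over $K^G\simeq k(G)$ with group $G$, combined with Theorem \ref{t3.2} and Theorem \ref{t3.6}(2), do yield ``$k(G)$ retract $k$-rational $\Rightarrow$ $k(M)^G$ retract $k$-rational'' for every faithful $M$ with $[M]^{fl}$ invertible. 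Be aware, though, that the ``transitivity of retract rationality'' you invoke is itself a nontrivial theorem ([Ka1, Theorem 4.2], used in Step 2 of the proof of Theorem \ref{t1.9} in this paper); it should be cited, not treated as formal.

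The genuine gap is in the converse, and you have also swapped which reverse implication is trivial. The converse of the ``for any'' version is the one settled by $M=\bm{Z}[G]$: if $k(M)^G$ is retract $k$-rational for \emph{all} admissible $M$, then in particular for $\bm{Z}[G]$, and $k(\bm{Z}[G])^G=k(G)$. The hard implication is the converse of the ``for some'' version, where a single unspecified faithful $M$ with $[M]^{fl}$ invertible is handed to you and you must descend from $k(M)^G$ (equivalently, from $L$) to $k(G)=K^G$. For this your text offers only a plan, not an argument, and the plan leans on the wrong tool: Theorem \ref{t3.6}(2) says $L$ is a retract of a localized polynomial ring over $k(G)$, which controls $L$ from above; it does not exhibit $k(G)$ as a retract of $L$, which is what descent requires (retract rationality does not pass to arbitrary subfields, as you note). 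The missing ingredient is concrete: $k(G)$ \emph{is} a retract of $L=K(M)^G$ over $k$, because the torus over $k(G)$ split by $K$ with character module $M$ has the identity element as a $k(G)$-rational section, which yields $k$-algebra maps $B\to A\to B$ composing to $1_B$, where $B$ and $A$ are affine models of $k(G)$ and $L$; one then quotes the fact that a retract of a retract $k$-rational field is retract $k$-rational (Saltman [Sa3]). Note this descent step uses only faithfulness of $M$, not invertibility of $[M]^{fl}$. Until this (or an equivalent) argument is supplied, the ``if'' half of the ``for some'' version remains unproven.
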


\begin{theorem} \label{t3.8}
Let $G$ be a finite group such that all the Sylow subgroups of $G$ are cyclic.
Let $M$ be a $G$-lattice.
Then $[M]^{fl}$ is invertible.
Moreover, if $k$ is an infinite field with $\zeta_e\in k$ where $e=\exp (G)$,
then both $k(G)$ and $k(M)$ are retract $k$-rational.
\end{theorem}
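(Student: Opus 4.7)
The plan is to first exploit the classical structure theorem for Z-groups (finite groups all of whose Sylow subgroups are cyclic): such a $G$ is metacyclic, admitting a presentation $\langle a, b \mid a^m = b^n = 1,\ bab^{-1} = a^r\rangle$ with $\gcd(m, n(r-1)) = 1$, and in particular a semidirect decomposition $G = N \rtimes G_0$ where $N = \langle a \rangle \simeq C_m$ is cyclic and normal, $G_0 = \langle b\rangle \simeq C_n$ is cyclic, and $\gcd(m, n) = 1$. It follows that $\exp(G) = \operatorname{lcm}(m, n) = mn = |G|$, so the hypothesis $\zeta_e \in k$ gives $\zeta_m, \zeta_n \in k$.

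For the first assertion, I would invoke the theorem of Endo and Miyata (recorded in Swan's survey \cite{Sw1} and Lorenz's book \cite{Lo}): for a finite group $G$, every flabby $G$-lattice is invertible if and only if every Sylow subgroup of $G$ is cyclic. Given any flabby resolution $0 \to M \to P \to E \to 0$ with $P$ permutation, the flabby lattice $E$ is therefore automatically invertible, whence $[M]^{fl} = [E]$ is invertible.

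For the retract $k$-rationality of $k(G)$, Fischer's classical theorem (applicable to any finite abelian group when enough roots of unity are available in $k$) yields that $k(N)$ and $k(G_0)$ are $k$-rational, hence retract $k$-rational. Since $N$ is abelian and $\gcd(|N|, |G_0|) = 1$, Saltman's Theorem~\ref{t1.6}(2) then gives retract $k$-rationality of $k(G) = k(N \rtimes G_0)$. The corresponding statement for $k(M)^G$ (the sensible reading of ``$k(M)$'' via Definition~\ref{d3.5}) follows by first reducing to the faithful case --- replacing $G$ by $G/\ker(G \to \operatorname{Aut}(M))$, which is again a Z-group of exponent dividing $e$ --- and then applying Theorem~\ref{t3.7}, valid because $[M]^{fl}$ is invertible. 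The main obstacle is the first assertion, which rests squarely on the Endo--Miyata classification; the rationality conclusions are then a routine assembly of results already collected in the paper.
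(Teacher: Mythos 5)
Your proof is correct, and it coincides with the paper's argument on two of the three assertions: the invertibility of $[M]^{fl}$ is obtained in both cases by quoting the Endo--Miyata theorem that over a group with all Sylow subgroups cyclic every flabby lattice is invertible, and the retract rationality of $k(M)^G$ (the intended reading of ``$k(M)$'', as you correctly note) is obtained in both cases by passing to $G/H$ with $H$ the kernel of the action, observing that $G/H$ is again a group with cyclic Sylow subgroups and that $M$ is a faithful $G/H$-lattice with $[M]^{fl}$ invertible, and applying Theorem \ref{t3.7}. Where you diverge is the middle step, the retract $k$-rationality of $k(G)$ itself: the paper notes via \cite{Is} that $[G,G]$ and $G/[G,G]$ are cyclic and then cites \cite[Theorem 5.10]{Ka1} as a black box, whereas you unpack the classical H\"older--Zassenhaus presentation $G=\langle a\rangle\rtimes\langle b\rangle$ with $\gcd(m,n(r-1))=1$, apply Fischer's theorem to the two cyclic factors (legitimate, since $\zeta_e\in k$ with $e=\exp(G)=mn$ forces $\zeta_m,\zeta_n\in k$ and, by the paper's convention, $\operatorname{char}k\nmid mn$), and then invoke Saltman's Theorem \ref{t1.6}(2), which is already stated in the paper and whose coprimality and abelian-kernel hypotheses are satisfied here. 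Your route is slightly longer but more self-contained relative to the results actually recorded in this paper; the paper's route is shorter by external citation but both ultimately rest on the same structure theory of groups with cyclic Sylow subgroups. No gaps.
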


\begin{proof}
$[M]^{fl}$ is invertible by Endo-Miyata's Theorem \cite[Theorem
1.5; Sw2, Theorem 3.4; Ka1, Theorem 3.5]{EM}.

By \cite[Theorem 5.16, page 160]{Is}, both $[G,G]$ and $G/[G,G]$ are cyclic groups.
By \cite[Theorem 5.10]{Ka1}, $k(G)$ is retract $k$-rational.

It remains to show that $k(M)^G$ is also retract $k$-rational.
Define $H=\{\tau\in G:\tau\cdot v=v$ for any $v\in M\}$.
Then $M$ is a faithful $G/H$-lattice.

It is clear that all Sylow subgroups of $G/H$ are cyclic. Hence
$k(G/H)$ is retract $k$-rational by the above arguments. Regard
$M$ as a faithful $G/H$-lattice. Then $[M]^{fl}$ is invertible by
Endo-Miyata's Theorem again. Thus we may apply Theorem \ref{t3.7}
to show that $k(M)^{G/H}$ is retract $k$-rational. But
$k(M)^G=\{k(M)^H\}^{G/H}=k(M)^{G/H}$.
\end{proof}

\begin{theorem} \label{t3.9}
Let $G$ be a finite group, $0\to M\to F\to B\to 0$ be an exact
sequence of $\bm{Z}[G]$-modules such that $F$ is a finitely
generated projective $\bm{Z}[G]$-module. Suppose that, for some
positive integer $e$, $e\cdot B=0$. If $\gcd\{|G|,e\}=1$, then $M$
is a projective $\bm{Z}[G]$-module.
\end{theorem}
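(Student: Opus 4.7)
The plan is to prove that $M$ is finitely generated, $\bm{Z}$-torsion-free, and cohomologically trivial, and then to invoke the standard criterion (see, e.g., Brown, \emph{Cohomology of Groups}, Chapter VI, \S 8; Cartan--Eilenberg, \emph{Homological Algebra}, Chapter XII) asserting that a finitely generated $\bm{Z}[G]$-module with these two properties is $\bm{Z}[G]$-projective. Since $F$ is projective over $\bm{Z}[G]$, it is a direct summand of a free $\bm{Z}[G]$-module and hence $\bm{Z}$-free; consequently the submodule $M \subset F$ is $\bm{Z}$-torsion-free. Since $G$ is finite, $\bm{Z}[G]$ is a Noetherian ring, so $M$ is finitely generated over $\bm{Z}[G]$ as well.

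For the cohomological triviality I would fix an arbitrary subgroup $H \le G$ and apply Tate cohomology $\hat{H}^{\ast}(H,-)$ to the exact sequence $0 \to M \to F \to B \to 0$. Because $\bm{Z}[G]$ is free as a $\bm{Z}[H]$-module, $F$ is $\bm{Z}[H]$-projective and hence cohomologically trivial, so $\hat{H}^i(H,F)=0$ for every $i\in\bm{Z}$, and the long exact sequence produces isomorphisms $\hat{H}^i(H,M) \cong \hat{H}^{i-1}(H,B)$ for all $i$. The right-hand side is annihilated both by $|H|$ (since Tate cohomology of a finite group is killed by the group order) and by $e$ (since $eB=0$). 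As $|H|$ divides $|G|$ and $\gcd\{|G|,e\}=1$, we have $\gcd\{|H|,e\}=1$, whence $\hat{H}^{i-1}(H,B)=0$. Thus $\hat{H}^i(H,M)=0$ for every $i$ and every subgroup $H\le G$, i.e.\ $M$ is cohomologically trivial.

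Having verified that $M$ is a finitely generated, $\bm{Z}$-torsion-free, cohomologically trivial $\bm{Z}[G]$-module, the cited projectivity criterion immediately yields that $M$ is $\bm{Z}[G]$-projective. There is no genuine obstacle in this route; the proof is essentially a bookkeeping exercise with the long exact sequence of Tate cohomology. The only point requiring care is the invocation of the standard theorem that cohomological triviality together with $\bm{Z}$-freeness implies projectivity of a finitely generated $\bm{Z}[G]$-module, which I would cite rather than reprove.
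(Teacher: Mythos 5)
Your proof is correct, and it ultimately rests on the same theorem of Rim as the paper's proof, but it gets there by a genuinely different route. The paper first shows that $B$ itself is cohomologically trivial: writing $n=|G|$ and choosing $s,t$ with $sn+te=1$, it exhibits multiplication by $s$ as an averaging datum making $B$ weakly projective in the sense of Cartan--Eilenberg, and then invokes Rim's theorem in the form ``cohomologically trivial $\Rightarrow$ homological dimension $\le 1$'', so that the kernel $M$ of a projective surjection onto $B$ is automatically projective. You instead dimension-shift: since $F$ is $\bm{Z}[H]$-projective for every subgroup $H\le G$, the long exact sequence of Tate cohomology gives $\hat{H}^i(H,M)\cong \hat{H}^{i-1}(H,B)$, and the latter group is annihilated by the coprime integers $|H|$ and $e$, hence vanishes; you then apply the projectivity criterion in the form ``finitely generated, $\bm{Z}$-torsion-free, cohomologically trivial $\Rightarrow$ projective'' directly to $M$, after the routine checks that $M$ is finitely generated (Noetherianity of $\bm{Z}[G]$) and $\bm{Z}$-torsion-free (as a submodule of the $\bm{Z}$-free module $F$). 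Your version dispenses with the weak-projectivity machinery and makes the role of the coprimality hypothesis completely transparent, at the cost of needing the torsion-free form of Rim's criterion rather than the homological-dimension form; both forms are standard and are available in the references the paper already cites. No gap.
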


\begin{proof}
This theorem is essentially a restatement of a result of Swan \cite[Proposition 7.1]{Sw3}.

Write $n=|G|$.
Find integers $s$ and $t$ such that $sn+te=1$.

Define a map $\phi:B\to B$ by $\phi(b)=s\cdot b$ for any $b\in B$.
Note that $\phi$ is a homomorphism of abelian groups, it is not a
$\bm{Z}[G]$-morphism.

For any $\sigma \in G$, define $\side{\sigma}{\phi}: B\to B$ by
$\side{\sigma}{\phi}(b):=\sigma(\phi(\sigma^{-1}\cdot b))$. Note
that $\side{\sigma}{\phi}(b)=\sigma(\phi(\sigma^{-1}\cdot
b))=\sigma (s\cdot(\sigma^{-1}\cdot b))=s\cdot b$. Hence
$(\sum_{\sigma \in G} \side{\sigma}{\phi})(b)=ns\cdot
b=(1-te)\cdot b=b$.

Thus the $\bm{Z}[G]$-module $B$ is weakly projective in the sense
of \cite[Proposition 1.1, page 233]{CE}. Hence it is
cohomologically trivial by \cite[Proposition 2.2, page 236]{CE}.
Applying \cite[Theorem 4.12]{Ri}, we find that $B$ has homological
dimension $\le 1$. But $0\to M\to F\to B\to 0$ is a resolution of
$B$. Thus $M$ is a projective module.
\end{proof}

\section{Proof of Theorem \ref{t1.9}}

The following theorem is a special case of Theorem \ref{t1.6}. We
include it here for two reasons. First, Saltman proves Theorem
\ref{t1.6} by constructing a generic Galois $G$-extension while
our proof of Theorem \ref{t4.2} is a direct proof showing $k(G)$
is retract $k$-rational. Second, Saltman's Theorem is the
prototype of Jambor's Theorem. A good understanding of (a variant
of) Saltman's Theorem, i.e. Theorem \ref{t4.2}, is helpful to
clarify the main idea of the proof of Theorem \ref{t1.9}.

\begin{theorem} \label{t4.2}
Let $G=N\rtimes G_0$ be a finite group where $N$ is an abelian
normal subgroup of exponent $e$. Let $k$ be an infinite field with
$\zeta_e\in k$. If $\gcd\{|N|,|G_0|\}=1$ and $k(G_0)$ is retract
$k$-rational, then $k(G)$ is also retract $k$-rational.
\end{theorem}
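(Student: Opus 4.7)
The plan is to apply Theorem \ref{t3.7} twice: first to $G$, reducing the retract rationality of $k(G)$ to that of $k(M)^{G}$ for a carefully chosen faithful $G$-lattice $M$ with $[M]^{fl}$ invertible, and then to $G_0$, deducing the retract rationality of the resulting field from the hypothesis on $k(G_0)$. The bridge between the two applications is the diagonalization of the $N$-action on the $N$-part of $M$, which is available because $\zeta_e\in k$.

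Take $M=\bm{Z}[G/G_0]\oplus\bm{Z}[G/N]$, a permutation $G$-lattice. Identifying $G/G_0$ with $N$ as a $G$-set (with $N$ acting by left multiplication and $G_0$ by conjugation) and $G/N$ with $G_0$ as a $G$-set (with $N$ acting trivially and $G_0$ by left multiplication), one checks that $N$ acts faithfully on $\bm{Z}[G/G_0]$ and $G_0$ acts faithfully on $\bm{Z}[G/N]$, so $M$ is a faithful $G$-lattice. Since $M$ is permutation, $[M]^{fl}=0$ is (trivially) invertible; by Theorem \ref{t3.7} applied to $G$, it suffices to show that $k(M)^{G}$ is retract $k$-rational.

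Write $k(M)=k(x_\tau:\tau\in N)(y_h:h\in G_0)$ with $\sigma\cdot x_\tau=x_{\sigma\tau}$, $g_0\cdot x_\tau=x_{g_0\tau g_0^{-1}}$, $\sigma\cdot y_h=y_h$ and $g_0\cdot y_h=y_{g_0h}$ for $\sigma\in N$, $g_0\in G_0$. Since $N$ acts trivially on the $y_h$'s, $k(M)^{N}=k(x_\tau)^{N}(y_h)$. Using $\zeta_e\in k$, diagonalize the $N$-action by setting $u_\chi:=\sum_{\tau\in N}\chi(\tau^{-1})x_\tau$ for $\chi\in\widehat{N}:=\fn{Hom}(N,k^\times)$, so that $k(x_\tau)=k(u_\chi)$ with $\sigma\cdot u_\chi=\chi(\sigma)u_\chi$ and $g_0\cdot u_\chi=u_{g_0\chi}$. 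The $N$-invariant subfield $k(u_\chi)^{N}$ is then generated over $k$ by the monomials $\prod u_\chi^{a_\chi}$ whose weight $\sum a_\chi\chi$ vanishes in $\widehat{N}$; equivalently, $k(u_\chi)^{N}=k(W)$ in the sense of Definition \ref{d3.5}, where $W:=\fn{Ker}\bigl(\bm{Z}[\widehat{N}]\to\widehat{N}\bigr)$ is a $G_0$-sublattice of the permutation $G_0$-lattice $\bm{Z}[\widehat{N}]$. Consequently $k(M)^{G}=k(W\oplus\bm{Z}[G_0])^{G_0}$.

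The crucial point, and the main obstacle, is to verify that $[W]^{fl}$ is invertible as a $G_0$-lattice. Here I invoke Theorem \ref{t3.9}: since $\widehat{N}$ is a finite $G_0$-module of exponent $e$ and $\gcd(|G_0|,e)=1$ (because $e\mid|N|$), choosing any finitely generated free resolution $0\to K\to F\to\widehat{N}\to 0$ over $\bm{Z}[G_0]$ produces a projective $K$. Schanuel's lemma, applied to this resolution and to $0\to W\to\bm{Z}[\widehat{N}]\to\widehat{N}\to 0$, yields $W\oplus F\cong K\oplus\bm{Z}[\widehat{N}]$. Since $K$ is projective (hence a direct summand of a free $\bm{Z}[G_0]$-module, in particular invertible) and $\bm{Z}[\widehat{N}]$ is permutation, $W\oplus F$ is invertible, and as invertibility is closed under direct summands, so is $W$. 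Therefore $W\oplus\bm{Z}[G_0]$ is a faithful $G_0$-lattice with $[W\oplus\bm{Z}[G_0]]^{fl}=[W]$ invertible, and Theorem \ref{t3.7} applied to $G_0$ gives that $k(W\oplus\bm{Z}[G_0])^{G_0}$ is retract $k$-rational if and only if $k(G_0)$ is. The latter holds by hypothesis, so $k(M)^{G}$ is retract $k$-rational, and another application of Theorem \ref{t3.7} (to $G$) finishes the proof.
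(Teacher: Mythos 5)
Your proof is correct, and its engine is the same as the paper's: diagonalize the $N$-action using $\zeta_e\in k$, identify the $N$-invariant field with $k(\cdot)$ of the kernel of a map from a permutation $G_0$-lattice onto the character group of $N$, prove that this kernel is invertible using the coprimality hypothesis via Theorem \ref{t3.9}, and conclude with Theorem \ref{t3.7}. The differences are in the bookkeeping, and they are worth noting. The paper works with the regular representation $k(x(g'):g'\in G)$ throughout; after diagonalizing, it obtains a sequence $0\to M\to P\to N^*\to 0$ in which $P$ is a \emph{free} $\bm{Z}[G_0]$-module of rank $|N|$, so Theorem \ref{t3.9} applies directly to give $M$ projective, and $k(M)^{G_0}=k(G)$ on the nose. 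You instead begin with an extra application of Theorem \ref{t3.7} to $G$ itself, replacing $k(G)$ by $k(\bm{Z}[G/G_0]\oplus\bm{Z}[G/N])^{G}$; this shrinks the lattices involved, but it makes the middle term of your key sequence the permutation lattice $\bm{Z}[\widehat{N}]$, which is not free (nor projective) over $\bm{Z}[G_0]$ when characters have nontrivial stabilizers, so Theorem \ref{t3.9} cannot be applied to $0\to W\to\bm{Z}[\widehat{N}]\to\widehat{N}\to 0$ directly. You correctly recognize this and patch it with Schanuel's lemma against a free resolution of $\widehat{N}$ --- a step the paper's choice of model avoids. Both routes are sound. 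One cosmetic slip: $[W\oplus\bm{Z}[G_0]]^{fl}$ equals $[W]^{fl}$, the similarity class of a flabby complement of $W$, not $[W]$; what your argument actually delivers, and all that Theorem \ref{t3.7} requires, is that this class is invertible, which does follow from the invertibility of $W$ (take $W\oplus W'$ permutation and use $0\to W\to W\oplus W'\to W'\to 0$ as a flasque resolution).
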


\begin{proof}
Step 1. Recall that $k(G)=k(x(g'):g'\in G)^G$. We will work on the
$k$-vector space $X=\bigoplus_{g'\in G} k\cdot x(g')$ with a left
$G$-action defined by $h'\cdot x(g')=x(h'g')$ for any $g',h'\in
G$.

There is also a right $G$-action on $X$ defined by $x(g')\cdot
h'=x(g'h')$ for any $g',h'\in G$. Note that, if $g_1,g_2,g_3\in
G$, then $(g_1\cdot x(g_2))\cdot g_3=g_1\cdot (x(g_2)\cdot g_3)$.
Thus the group algebra $\Lambda:=\{\sum_i a_i\cdot g'_i:a_i\in k,
g'_i\in G\}$ acts on $X$ from the left and from the right. The
reason to introduce the right action of $G$ on $X$ is that it
facilitate to write elements of $X$ in a succinct way in the
sequel.

\bigskip
Step 2. Consider the $k$-vector subspace $Y=\bigoplus_{\tau\in N}
k\cdot x(\tau)$. The group $N$ acts on $Y$ and this group action
can be diagonalized simultaneously because $N$ is abelian of
exponent $e$ and $\zeta_e\in k$.

Explicitly, define $N^*=\fn{Hom}_{\bm{Z}}(N,\langle
\zeta_e\rangle)$ which is a $\bm{Z}[G_0]$-module defined as: for
any $g\in G_0$, any $\chi\in N^*$, $\side{g}{\chi} \in N^*$ and
the character $\side{g}{\chi}$ is defined by
$\side{g}{\chi}(\tau):=\chi(g^{-1}\tau g)$ for any $\tau\in N$.
Now, for any $\chi\in N^*$, define $w(\chi)=\sum_{\tau\in
N}\chi(\tau^{-1})\cdot x(\tau)\in Y$. Clearly
$\sigma(w(\chi))=\chi(\sigma)\cdot w(\chi)$ for all $\sigma\in N$.
Thus $Y=\bigoplus_{\chi \in N^*} k\cdot w(\chi)$.

It follows that $X=\bigoplus_{\chi,g} k\cdot (w(\chi)\cdot g)$
where $\chi$ runs all characters in $N^*$ and $g$ runs all
elements in $G_0$. Thus $k(x(g'):g'\in G)=k(w(\chi)\cdot g:\chi
\in N^*, g \in G_0)$.

\medskip
Note that $\sigma\cdot (w(\chi)\cdot g)=\chi(\sigma)\cdot
(w(\chi)\cdot g)$ for any $\sigma\in N$, any $g\in G_0$. Also note
that $w(\chi)\cdot g=\sum_{\tau\in N} \chi(\tau^{-1})x(\tau
g)=\sum_{\tau\in N}\chi(\tau^{-1})x(g\cdot g^{-1}\tau g)=g\cdot
w(\side{g^{-1}}{\chi})$.

\bigskip
Step 3. Define $P:=\bigoplus_{\chi,g} \bm{Z}\cdot (w(\chi)\cdot
g)$ where $\chi\in N^*$ and $g\in G_0$. It follows that
$P=\bigoplus_{\chi,g}\bm{Z}\cdot (w(\chi)\cdot
g)=\bigoplus_{\chi,g} \bm{Z}\cdot(g\cdot
w(\side{g^{-1}}{\chi}))=\bigoplus_{\psi,g}\bm{Z}\cdot(g\cdot
w(\psi))$ where $g\in G_0$ and $\chi\in N^*$ (resp.\ $\psi\in
N^*$).

\medskip
For any $h \in G_0$, define $h \cdot (g\cdot w(\psi))=hg \cdot
w(\psi)$. Thus $P$ becomes a $G_0$-lattice. For a fixed $\psi \in
N^*$, the $\bm{Z}[G_0]$-module $\bigoplus_{g\in G_0} \bm{Z}\cdot
(g\cdot w(\psi))$ is isomorphic to $\bm{Z}[G_0]$. Hence $P$ is a
free $\bm{Z}[G_0]$-module of rank equal to $|N|$.

\medskip
Define a map $\Phi:P\to N^*$ by $\Phi(w(\chi)\cdot g)=\chi$. Note
that, for any $h \in G_0$, $\Phi (h \cdot (w(\chi)\cdot
g))=\Phi(w(\side{h}\chi)\cdot hg)=\side{h}\chi=h \cdot
\Phi(w(\chi)\cdot g)$. It follows that $\Phi$ is a morphism of
$\bm{Z}[G_0]$-modules.

Define $M=\fn{Ker} (\Phi)$. Then we have an exact sequence of
$\bm{Z}[G_0]$-modules $0\to M\to P\to N^*\to 0$.

\bigskip
Step 4. Since $k(P)=k(w(\chi)\cdot g:\chi \in N^*, g \in G_0)$ and
$\sigma\cdot (w(\chi)\cdot g)=\chi(\sigma)\cdot (w(\chi)\cdot g)$
for any $\sigma\in N$, it is easy to see that $M$ is the set of
all ``monomials" (in the variables $w(\chi)\cdot g$) which are
left fixed by the action of $N$. We conclude that
$k(M)=k(P)^N=k(x(g'):g'\in G)^N$. It follows that
$k(M)^{G_0}=\{k(P)^N\}^{G_0}=k(G)$

The above deduction that $k(M)^{G_0}=k(G)$ was generalized by
Saltman in Theorem \ref{t4.1}. It is useful in other rationality
problems.

\bigskip
Step 5. Since $\gcd\{|N|,|G_0|\}=1$ and $P$ is a free
$\bm{Z}[G_0]$-module by Step 3, we may apply Theorem \ref{t3.9} to
the sequence $0\to M\to P\to N^*\to 0$. Thus $M$ is a projective
$\bm{Z}[G_0]$-module also. Hence it is an invertible
$G_0$-lattice; in particular, $[M]^{fl}$ is invertible.

Since $k(G_0)$ is retract $k$-rational by assumption,
we may apply Theorem \ref{t3.7} to $k(M)^{G_0}$.
It follows that $k(M)^{G_0}=k(G)$ is retract $k$-rational.
\end{proof}

We record the theorem of Saltman mentioned in Step 4 of the above
proof.

\begin{theorem}[{\cite[Theorem 3.1; Be, Lemma 2.7]{Sa4}}] \label{t4.1}
Let $G=N\rtimes G_0$ where $N$ is an abelian normal subgroup of
exponent $e$. Define $N^*=\fn{Hom}_{\bm{Z}}(N,\bm{Z}/e \bm{Z})$
and regard $N^*$ as a $\bm{Z}[G_0]$-module where $G_0$ acts on
$\bm{Z}/e \bm{Z}$ trivially. Suppose that $0\to M\to P\to N^*\to
0$ is an exact sequence of $\bm{Z}[G_0]$-modules such that $P$ is
a permutation $G_0$-lattice and $M$ is a faithful $G_0$-lattice.
If $k$ is an infinite field with $\zeta_e\in k$ and $G_0$ acts on
$k(M)$ by $k$-automorphisms, then $k(M)^{G_0}$ is stably
isomorphic to $k(G)$ over $k$.
\end{theorem}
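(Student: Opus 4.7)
The plan is to reduce this general statement to the explicit construction carried out in Steps 1--4 of the proof of Theorem \ref{t4.2} via a pullback argument. Recall that those steps produced a canonical short exact sequence $0 \to M_0 \to P_0 \to N^* \to 0$ of $\bm{Z}[G_0]$-modules, with $P_0 = \bigoplus_{\chi, g} \bm{Z} \cdot (w(\chi) \cdot g)$ free over $\bm{Z}[G_0]$, together with the identification $k(M_0)^{G_0} = k(G)$. Crucially, this identification used only the hypothesis $\zeta_e \in k$ and the diagonalization of the abelian $N$-action, not the $\gcd\{|N|,|G_0|\}=1$ assumption; so the canonical pair $(P_0, M_0)$ and the equality $k(M_0)^{G_0} = k(G)$ remain available in the setting of Theorem \ref{t4.1}.

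Given the arbitrary permutation resolution $0 \to M \to P \to N^* \to 0$ of the hypothesis, I form the pullback $Q := P \times_{N^*} P_0$, which yields two short exact sequences of $\bm{Z}[G_0]$-modules:
\[
0 \to M_0 \to Q \to P \to 0, \qquad 0 \to M \to Q \to P_0 \to 0.
\]
Lifting a permutation basis $\{f_i\}$ of $P$ to elements $\{(f_i, g_i)\} \subset Q$ (with $\pi(f_i)=\pi_0(g_i)$), one computes $\sigma \cdot (f_i, g_i) = (f_{\sigma i}, g_{\sigma i}) + (0, \delta_{\sigma, i})$ where $\delta_{\sigma,i} \in M_0$. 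Translated into the multiplicative language of Definition \ref{d3.5}, the $G_0$-action on the corresponding field variables $y_1, \ldots, y_r$ takes the twisted-permutation form $\sigma(y_i) = m_{\sigma, i} \cdot y_{\sigma i}$ with $m_{\sigma, i}$ a monomial in the generators of $k(M_0)$. This is a linear action over $L = k(M_0)$ (with permutation-monomial matrices and zero translation part), so Theorem \ref{t3.1} applies and yields
\[
k(Q)^{G_0} = k(M_0)^{G_0}(z_1, \ldots, z_r) = k(G)(z_1, \ldots, z_r)
\]
for some algebraically independent $z_i$. Applying the identical procedure to the second sequence, now with $L = k(M)$, gives $k(Q)^{G_0} = k(M)^{G_0}(w_1, \ldots, w_s)$ for some algebraically independent $w_j$. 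Comparing the two presentations of $k(Q)^{G_0}$ shows that $k(M)^{G_0}$ is stably $k$-isomorphic to $k(G)$.

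The main technical point is verifying the hypotheses of Theorem \ref{t3.1}, in particular the faithfulness condition on the action of $G_0$ on $L$. For $L = k(M)$ faithfulness is built into the hypothesis that $M$ is a faithful $G_0$-lattice; for $L = k(M_0)$ it follows from the freeness of $P_0$ as a $\bm{Z}[G_0]$-module, or alternatively one may pass to the quotient of $G_0$ by the kernel of its action on $M_0$ and descend. The remaining routine ingredient is the identity $k(M_0)(z_1,\ldots,z_r)^{G_0} = k(M_0)^{G_0}(z_1,\ldots,z_r)$ (and its analogue for $k(M)$), which follows by expanding a $G_0$-invariant in powers of the $z_i$ and comparing coefficients, using the algebraic independence of the $z_i$ over $k(M_0)$.
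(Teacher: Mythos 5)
Your proposal is correct, and it is worth noting that the paper itself gives no proof of Theorem \ref{t4.1} at all: the result is simply quoted from Saltman, so there is nothing internal to compare against except Steps 1--4 of the proof of Theorem \ref{t4.2}, which your argument correctly identifies as establishing the special case of the canonical resolution $0\to M_0\to P_0\to N^*\to 0$ with $k(M_0)^{G_0}=k(G)$ (and you are right that those steps use only $\zeta_e\in k$ and the abelianness of $N$, not the coprimality hypothesis). Your extension to an arbitrary permutation resolution via the fibre product $Q=P\times_{N^*}P_0$ and the two exact sequences $0\to M_0\to Q\to P\to 0$, $0\to M\to Q\to P_0\to 0$ is the standard comparison argument for two resolutions of the same module, and the reduction of each sequence to Theorem \ref{t3.1} (monomial matrices over $L=k(M_0)$, resp. $L=k(M)$, with zero translation part) is carried out correctly: the cocycle computation $\sigma\cdot(f_i,g_i)=(f_{\sigma i},g_{\sigma i})+(0,\delta_{\sigma,i})$ with $\delta_{\sigma,i}=\sigma g_i-g_{\sigma i}\in M_0$ is exactly right, and the two presentations of $k(Q)^{G_0}$ give the stable isomorphism. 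The two points you flag as needing care are genuine but are handled adequately: faithfulness of $M_0$ follows because $M_0$ has finite index in the free module $P_0$ (so $M_0\otimes\bm{Q}\simeq P_0\otimes\bm{Q}$ is a faithful $\bm{Q}[G_0]$-module), and the descent $L(z_1,\ldots,z_r)^{G_0}=L^{G_0}(z_1,\ldots,z_r)$ for invariant, algebraically independent $z_i$ is standard (e.g.\ by comparing degrees over $L^{G_0}(z_1,\ldots,z_r)$ via Artin's theorem). In short, you have supplied a complete and correct proof of a statement the paper only cites, using precisely the machinery the paper develops elsewhere.
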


\bigskip
\begin{proof}[Proof of Theorem \ref{t1.9}] ~

Step 1. As in Step 1 of the proof of Theorem \ref{t4.2}, define a
$k$-vector space $X=\bigoplus_{g'\in G}k\cdot x(g')$ with a left
$G$-action. It follows that $k(G)=k(x(g')): g\in G)^G$. Moreover,
define a right action of $G$ on $X$ by $x(g')\cdot h'=x(g'h')$ for
any $g',h'\in G$. Thus the group algebra $\Lambda:=\{\sum_i
a_i\cdot g'_i:a_i\in k,g'_i\in G\}$ acts on $X$ from the left and
from the right.

Similarly, for any prime number $p\mid |N|$, define a $k$-vector
space $\tilde{X}_p=\bigoplus_{g'\in G}k\cdot x_p(g')$ with a left
$G$-action and a right $G$-action.

Choose any prime number $p_1$ with $p_1\mid |N|$. Embed $X$ into
$\bigoplus_{p\mid |N|} \tilde{X}_p$ by sending $x(g')$ to
$x_{p_1}(g')$ for any $g' \in G$.

Apply Theorem \ref{t3.1}. We find that $k(x_p(g'):g'\in G, p\mid
|N|)^G$ is rational over $k(x(g'):g'\in G)^G=k(G)$. By Theorem
\ref{t3.2}, $k(x_p(g'):g'\in G, p\mid |N|)^G$ is retract
$k$-rational if and only if so is $k(G)$. It suffices to show that
$k(x_p(g'):g'\in G, p\mid |N|)^G$ is retract $k$-rational.

\bigskip
Step 2. Consider the representation $G_0\to GL(W_{\fn{reg}})$
where $W_{\fn{reg}}$ is the regular representation space of $G_0$.
Consider the composite map $G\to G_0\to GL(W_{\fn{reg}})$. Thus we
get a $k$-vector space $X_0=\bigoplus_{g\in G_0} k\cdot x_0(g)$
with a left $G$-action defined by $\tau\cdot x_0(g)=x_0(g)$,
$h\cdot x_0(g)=x_0(hg)$ for any $g,h\in G_0$, any $\tau\in N$.

Consider $k(x_p(g'),x_0(g):g'\in G, g\in G_0, p\mid |N|)$. Apply
Theorem \ref{t3.1} again. We find that $k(x_p(g'), x_0(g):g'\in
G,g\in G_0, p\mid |N|)^G$ is rational over $k(x_p(g'):g'\in G,
p\mid |N|)^G$. By Theorem \ref{t3.2}, $k(x_p(g'),x_0(g):g'\in
G,g\in G_0,p\mid |N|)^G$ is retract $k$-rational if and only if so
is $k(x_p(g'):g'\in G, p\mid |N|)^G$. It remains to show that
$k(x_p(g'),x_0(g):g'\in G,g\in G_0,p\mid |N|)^G$ is retract
$k$-rational.

\bigskip
Define $K=k(x_0(g):g\in G_0)$. Then $K(x_p(g'):g'\in G,p\mid
|N|)=k(x_p(g'),x_0(g):g'\in G,g\in G_0,p\mid |N|)$. We will show
that $K(x_p(g'):g'\in G,p\mid |N|)^G$ is retract rational over
$K^G\simeq k(G_0)$.

Assuming this, we find that $K(x_p(g'):g'\in G,p\mid |N|)^G$ is
retract $k$-rational by \cite[Theorem 4.2]{Ka1} because $k(G_0)$
is retract $k$-rational by assumption. This finishes the proof
that $k(G)$ is retract $k$-rational.

In summary, our purpose is to show that $K(x_p(g'):g'\in G,p\mid
|N|)^G$ is retract rational over $K^G$ where $K=k(x_0(g):g\in
G_0)$.

\bigskip
Step 3. Recall the notations in Definition \ref{d1.7} where
$N=\prod_{p\mid |N|} N_p$ and $H_p=\{g\in G_0: g\tau g^{-1}$
$=\tau$ for any $\tau\in N_p\}$. Define $N'_p$ to be the subgroup
of $N$ generated by Sylow subgroups of $N$ other than $N_p$. As
before, define $N^*=\fn{Hom}_{\bm{Z}}(N,\langle \zeta_e\rangle)$
and $N_p^*=\fn{Hom}_{\bm{Z}}(N_p,\langle \zeta_e\rangle)$ with the
natural structures of $\bm{Z}[G_0]$-modules.

We may write $N^{*}= \oplus_{p\mid |N|} N_p^{*}$ in the sense
that, for any $\chi \in N_p^{*}$ and any $\sigma \in N'_p$, we
require that $\chi(\sigma)=1$.

For any prime number $p\mid |N|$, define a subspace $Y_p$ of
$\tilde{X}_p$ by defining $Y_p:=\bigoplus_{\tau \in N_p} k\cdot
y(\tau)$ with $y(\tau)$ defined as $y(\tau):=\sum_\sigma
x_p(\tau\sigma)$ where $\sigma$ runs over elements in $N'_p$. It
is easy to verify that $\tau'\cdot y(\tau)=y(\tau'\tau)$, $\sigma
\cdot y(\tau)=y(\tau)$ for any $\tau,\tau'\in N_p$, any $\sigma
\in N'_p$.

Define a subspace $X_p$ of $\tilde{X}_p$ by
$X_p:=\bigoplus_{\tau,g} k\cdot (y(\tau)\cdot g)$ where $\tau\in
N_p$, $g\in G_0$.

\medskip
Note that $X_0\oplus \bigoplus_{p\mid |N|} X_p$ is a faithful
$G$-subspace (from the left) of $X_0\oplus \bigoplus_{p\mid |N|}
\tilde{X}_p$. Apply Theorem \ref{t3.1}. We find that
$K(x_p(g'):g'\in G,p\mid |N|)^G=K(y(\tau)\cdot g:\tau\in N_p$,
$g\in G_0$, $p\mid |N|)^G(u_1,\ldots,u_t)$ for some variables
$u_1,\ldots,u_t$ with $g'(u_i)=u_i$ for all $g'\in G$, for all
$1\le i\le t$. By Theorem \ref{t3.2}, $K(x_p(g'):g'\in G,p\mid
|N|)^G$ is retract rational over $K^G$ if and only if so is
$K(y(\tau)\cdot g: \tau\in N_p$, $g\in G_0$, $p\mid |N|)^G$ over
$K^G$.

In conclusion, our goal is to prove that $K(y(\tau)\cdot g:
\tau\in N_p$, $g\in G_0$, $p\mid |N|)^G$ is retract rational over
$K^G$.

\bigskip
Step 4. In Step 4 -- Step 6, $p$ denotes a prime divisor of $|N|$,
which is fixed in the discussion.

For any $\chi\in N_p^*$, define $w(\chi)=\sum_{\tau\in N_p}
\chi(\tau^{-1})\cdot y(\tau)\in Y_p$. It is not difficult to show
that $\tau\cdot w(\chi)=\chi(\tau)\cdot w(\chi)$ and $\sigma\cdot
w(\chi)=w(\chi)$ for any $\tau\in N_p$, for all $\sigma\in N'_p$.

\medskip
On the other hand, write $H_p=\{g_1,\ldots,g_s\}$ with $s=|H_p|$.
The group $H_p$ acts naturally on $K(x_p(g_i):1\le i\le s)$ and
the restriction of this action to $K$ is faithful. By Theorem
\ref{t3.1}, there is a matrix $A:=(\alpha_{ij})\in GL_s(K)$ such
that $K(x_p(g_i):1\le i\le s)=K(\tilde{u}_i:1\le i\le s)$ where
$\tilde{u}_i=\sum_{1\le l\le s}\alpha_{il}\cdot x_p(g_l)$ and
$g\cdot \tilde{u}_i=\tilde{u}_i$ for all $g\in H_p$, for all $1\le
i\le s$.

\medskip
For $1\le i\le s$ and $\chi\in N_p$, define $u_i(\chi):=\sum_{1\le
l\le s} \alpha_{il}\cdot (w(\chi)\cdot g_l)\in \bigoplus_{\tau,g}
K\cdot (y(\tau)\cdot g)$. Note that $u_i(\chi) \in K\otimes_k X_p$
; in fact, $\bigoplus_{i,\chi} K\cdot u_i(\chi)=\bigoplus_{i,\tau}
K\cdot (y(\tau)\cdot g_i)$ where $1\le i\le s$, $\chi\in N_p^*$
and $\tau\in N_p$.

Since $N$ acts trivially on $K$, it is easy to verify that, if
$\tau\in N_p$, $\sigma\in N'_p$ and $\chi\in N_p^*$, then
$\tau\cdot u_i(\chi)=\chi(\tau)\cdot u_i(\chi), \, \sigma\cdot
u_i(\chi)=u_i(\chi)$.

In the next step, we will show that, if $g\in H_p$, $\chi\in
N_p^*$ and $1\le i\le s$, then $g\cdot u_i(\chi)=u_i(\chi)$.

\bigskip
Step 5. First we note that, if $\tau\in N_p$ and $g\in G_0$, then
$y(\tau)\cdot g=g\cdot y(g^{-1}\tau g)$; similarly, if $\chi \in
N_p^*$ and $1\le l\le s$, then $w(\chi)\cdot g_l=g_l\cdot w(\chi)$
because each $g_l\in H_p$.

\medskip
If $g\in H_p$, $\chi\in N_p^*$ and $1\le i\le s$, then we have
\begin{align*}
g\cdot u_i(\chi) &= g\cdot \sum_{1\le l\le s} \alpha_{il}\cdot (w(\chi)\cdot g_l) \\
&= \sum_{1\le l\le s} g(\alpha_{il})\cdot (g(g_l\cdot w(\chi))) \\
&= \sum_{1\le l\le s} g(\alpha_{il})\cdot ((g\cdot g_l)\cdot \sum_{\tau\in N_p} \chi (\tau^{-1})\cdot y(\tau)) \\
&= \sum_{1\le l\le s} g(\alpha_{il})\cdot ((g\cdot g_l)\cdot
\sum_{\tau,\sigma} \chi(\tau^{-1})\cdot x_p(\tau\sigma))
\end{align*}
where $\tau\in N_p$, $\sigma\in N'_p$.

The right-hand-side of the above identity becomes
\begin{align*}
& \sum_{1\le l\le s} g(\alpha_{il})\cdot (x_p(g\cdot g_l)\cdot \sum_{\tau,\sigma} \chi(\tau^{-1}) \cdot \tau\sigma) \\
={}& (\sum_{1\le l\le s} g(\alpha_{il})\cdot x_p(g\cdot g_l))\cdot
\sum_{\tau,\sigma} \chi(\tau^{-1})\cdot \tau\sigma
\end{align*}
where the right factor is an element in the group algebra
$\Lambda:=\{\sum_i a_i\cdot g'_i:a_i\in k,$ $g'_i\in G\}$ (thus we
write the term as $\tau\sigma$, not as $x_p(\tau\sigma)$).

Continue the computation of the right-hand-side of the above
identity. We get
\begin{align*}
& g(\tilde{u}_i)\cdot (\sum_{\tau,\sigma} \chi(\tau^{-1})\cdot \tau\sigma) \\
={}& \tilde{u}_i\cdot (\sum_{\tau,\sigma} \chi(\tau^{-1})\cdot
\tau\sigma).
\end{align*}

Note that the right-hand-side is independent of the element $g\in
H_p$. We conclude that $g\cdot u_i(\chi)=u_i(\chi)$ for all $g\in
H_p$.

\bigskip
Step 6. Write $G_0=\bigcup_{1\le j\le t} h_j H_p$ where
$|G_0|=t\cdot |H_p|=st$. Note that $s$ and $t$ depend on the prime
factor $p$; for simplicity, we will use the notation $s$ and $t$
instead of $s(p)$ and $t(p)$.

Define $u_{ij}(\chi)=h_j(u_i(\chi))$ for $1\le i\le s$, $1\le j\le
t$, $\chi\in N_p^*$.

Clearly $\bigoplus_{i,j,\chi} K\cdot u_{ij}(\chi)$ is the induced
representation space of $\bigoplus_{i,\chi} K\cdot
u_i(\chi)=\bigoplus_{i,\tau}K\cdot(y(\tau)\cdot g_i)$ in $K
\otimes X_p$. Counting the dimensions, we find that
$\{u_{ij}(\chi):1\le i\le s$, $1\le j\le t$, $\chi\in N_p^*\}$ is
a basis of $K \otimes X_p$. Thus $K(y(\tau)\cdot g: \tau\in
N_p,g\in G_0)=K(u_{ij}(\chi):1\le i\le s$, $1\le j\le t$, $\chi\in
N_p^*)$.

It is not difficult to check the action of $G_0$ on
$u_{ij}(\chi)$. It is given as follows: For any $h\in G_0$, if
$h\cdot h_j=h_{j'}\cdot g$ for some $1\le j'\le t$, some $g\in
H_p$, then $h\cdot u_{ij}(\chi)=h(h_j(u_i(\chi))=(h_{j'}\cdot
g)(u_i(\chi))=u_{ij'}(\chi)$.

\bigskip
Step 7. Define a $G_0$-lattice $F:=\bigoplus_{i,j,\chi,p}
\bm{Z}\cdot u_{ij}(\chi)$ where $1\le i\le s$, $1\le j\le t$,
$\chi\in N_p^*$ and $p\mid |N|$. For each $p\mid |N|$, define
$F_p:=\bigoplus_{i,j,\chi}\bm{Z}\cdot u_{ij}(\chi)$ where $1\le
i\le s$, $1\le j\le t$ and $\chi\in N_p^*$. Thus $F=\oplus_{p\mid
|N|}F_p$.

By the last remark of Step 6, $F_p$, $F$ are permutation
$G_0$-lattices. Moreover, $K(P)=K(u_{ij}(\chi):1\le i\le s$, $1\le
j\le t$, $\chi\in N_p^*$, $p\mid |N|)$ $=K(y(\tau)\cdot g: \tau\in
N_p$, $g\in G_0$, $p\mid |N|)$.

The action of $N$ on $u_{ij}(\chi)$ is given as follows. If
$\tau\in N_p$ and $\chi\in N_p^*$, then
\begin{align*}
\tau\cdot(u_{ij}(\chi)) &= \tau\cdot (h_j(u_i(\chi)))=(h_j\cdot h_j^{-1}\tau h_j)(u_i(\chi)) \\
&= \chi(h_j^{-1}\tau h_j)(h_j\cdot (u_i(\chi)))=(\side{h_j}{\chi})(\tau)\cdot u_{ij}(\chi).
\end{align*}

Similarly, if $\sigma\in N'_p$ and $\chi\in N_p^*$, then $\sigma(u_{ij}(\chi))=u_{ij}(\chi)$.

\bigskip
Step 8. The remaining proof is similar to Step 3 -- Step 5 of the
proof of Theorem \ref{t4.2}. Define a map $\Phi:F\to N^*$ by
$\Phi(u_{ij}(\chi))=\side{h_j}{\chi}$ if $\chi\in N_p^*$. Define
$M=\fn{Ker}(\Phi)$. Let $\Phi_p$ be the restriction of $\Phi$ to
$N_p^*$, i.e. $\Phi_p:F_p\to N_p^* \subset N^*$ with
$\Phi_p(u_{ij}(\chi))=\side{h_j}{\chi}$ (remember the isomorphism
$N^{*}= \oplus_{p\mid |N|} N_p^{*}$ in Step 3). Define
$M_p=\fn{Ker}(\Phi_p)$. Clearly $\Phi$ is a morphism of
$\bm{Z}[G_0]$-modules.

\medskip
Since $H_p$ acts trivially on $F_p$ by Step 6 and $H_p$ also acts
trivially on $N_p$, the exact sequence of $\bm{Z}[G_0]$-modules
$0\to M_p\to F_p\to N_p^*\to 0$ can be regarded as an exact
sequence of $\bm{Z}[G_0/H_p]$-modules.

Now use the assumption that either $p\nmid [G_0:H_p]$ or all the
Sylow subgroups of $G_0/H_p$ are cyclic. We find that $M_p$ is a
projective module over $\bm{Z}[G_0/H_p]$ (when we apply Theorem
\ref{t3.9}) or $[M_p]^{fl}$ is an invertible lattice over
$\bm{Z}[G_0/H_p]$ (when we apply Theorem \ref{t3.8}). In either
case, $[M_p]^{fl}$ is an invertible $G_0/H_p$-lattice. Thus it is
an invertible $G_0$-lattice.

\bigskip
Step 9. Remember that $N$ acts trivially on $K$. Thus
$K(F)^N=K(M)$ by the same arguments as in Step 4 of the proof of
Theorem \ref{t4.2}.

Since $\Phi(F_p) \subset N_p^{*}$ and $N^{*}= \oplus_{p\mid |N|}
N_p^{*}$ is a direct product, it follows that $M=\bigoplus_{p\mid
|N|} M_p$.

On the other hand, all these $M_p$ satisfy the condition that
$[M_p]^{fl}$ is an invertible $G_0$-lattice. Hence $[M]^{fl}$ is
also an invertible $G_0$-lattice.

Note that $G_0$ acts faithfully on $K$. Applying Theorem
\ref{t3.6}, we find that $K(M)^{G_0}$ is retract rational over
$K^{G_0} \simeq k(G_0)$. Because $K(M)^{G_0}=K(y(\tau)\cdot g:
g\in G_0$, $\tau\in N_p$, $p\mid |N|)^G$, we reach the goal stated
at the end of Step 3.
\end{proof}

Here is an application of Theorem \ref{t1.9}.

\begin{theorem} \label{t4.3}
Let $G=N\rtimes G_0$ be a finite group where $N$ is an abelian
normal subgroup and all the Sylow subgroups of $G_0$ are cyclic
groups. If $k$ is an infinite field with $\zeta_e\in k$ where
$e=\fn{lcm}\{\exp(N),\exp(G_0)\}$, then $k(G)$ is retract
$k$-rational.
\end{theorem}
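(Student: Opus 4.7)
The plan is to reduce Theorem \ref{t4.3} to the combination of Theorem \ref{t1.9} and Theorem \ref{t3.8}. Since $\exp(G_0)$ divides $e=\fn{lcm}\{\exp(N),\exp(G_0)\}$ and $\zeta_e \in k$, in particular $\zeta_{\exp(G_0)}\in k$. Because all Sylow subgroups of $G_0$ are cyclic by hypothesis, Theorem \ref{t3.8} applies to $G_0$ and yields that $k(G_0)$ is retract $k$-rational. This gives us the rationality hypothesis on the complement required by Theorem \ref{t1.9}.

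Next I would verify the ``local'' hypothesis of Theorem \ref{t1.9}. Let $p$ be any prime dividing $|N|$, and recall $H_p=\{g\in G_0 : g\tau g^{-1}=\tau \text{ for all } \tau\in N_p\}$. Since $H_p\lhd G_0$ (it is the kernel of the conjugation action of $G_0$ on $N_p$), the quotient $G_0/H_p$ makes sense; its Sylow subgroups are homomorphic images of Sylow subgroups of $G_0$ (via the correspondence theorem), hence cyclic. Therefore for every prime $p\mid |N|$ the alternative that ``all Sylow subgroups of $G_0/H_p$ are cyclic'' in the hypothesis of Theorem \ref{t1.9} is satisfied.

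It remains to check the root-of-unity hypothesis for $N$: since $\exp(N)$ divides $e$ and $\zeta_e \in k$, we have $\zeta_{\exp(N)}\in k$, which is exactly what Theorem \ref{t1.9} requires. Assembling these observations, Theorem \ref{t1.9} applied to the semidirect product $G=N\rtimes G_0$ delivers that $k(G)$ is retract $k$-rational.

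There is no substantive obstacle here; the whole content of the argument is to confirm that the two hypotheses of Theorem \ref{t1.9} (retract rationality of $k(G_0)$, and the local condition on each $G_0/H_p$) both follow automatically from the cyclic-Sylow hypothesis on $G_0$ together with the presence of enough roots of unity. The only mild point to be careful about is that the exponent $e$ in Theorem \ref{t4.3} is the $\fn{lcm}$ of $\exp(N)$ and $\exp(G_0)$ rather than $\exp(N)$ alone, which is exactly what is needed so that \emph{both} the application of Theorem \ref{t3.8} to $G_0$ and the application of Theorem \ref{t1.9} to $G$ have their root-of-unity hypotheses satisfied simultaneously.
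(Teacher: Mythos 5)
Your proof is correct and follows exactly the route the paper takes: apply Theorem \ref{t3.8} to conclude $k(G_0)$ is retract $k$-rational, then invoke Theorem \ref{t1.9}. The extra details you supply (that Sylow subgroups of the quotients $G_0/H_p$ are cyclic, and that the single hypothesis $\zeta_e\in k$ with $e=\fn{lcm}\{\exp(N),\exp(G_0)\}$ covers both root-of-unity requirements) are all accurate and merely make explicit what the paper leaves implicit.
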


\begin{proof}
By Theorem \ref{t3.8},
$k(G_0)$ is retract $k$-rational.
Apply Theorem \ref{t1.9}.
\end{proof}

\begin{idef}{Remark.}
Barge shows that, if $N$ is abelian and $G_0$ is bicyclic, then
$B_0(N\rtimes G_0)=0$  \cite[Theorem 3]{Ba}. We don't know, in the
same situation, whether $\bm{C}(G)$ is retract $\bm{C}$-rational.
However, if $G_0$ has a $p$-Sylow subgroup which is not bicyclic,
then there is a group $G=N\rtimes G_0$ such that $N$ is abelian
and $\bm{C}(G)$ is not retract rational. See \cite[page 234; Sa5,
page 543]{Sa4}; also see the proof of Theorem 2.6 of \cite{Ba}.
\end{idef}

\section{\boldmath $B_0(G)$ can be very big}

First we recall a construction of Schur covering groups of abelian
$p$-groups.

\begin{theorem}[{\cite[page 90]{Kar}}] \label{t5.1}
Let $p$ be a prime number, $n_1\ge n_2\ge \cdots \ge n_t\ge 1$.
Define an abelian $p$-group $\Gamma:=C_{p^{n_1}}\times
C_{p^{n_2}}\times \cdots \times C_{p^{n_t}}$. A Schur covering
group $\widetilde{G}$ of $\Gamma$ may be defined as
$\widetilde{G}=\langle \sigma_1,\ldots,\sigma_t\rangle$ with
$Z(\widetilde{G})=[\widetilde{G},\widetilde{G}]$ and with
relations
$\sigma_1^{p^{n_1}}=\sigma_2^{p^{n_2}}=\cdots=\sigma_t^{p^nt}=1$,
$[\sigma_i,\sigma_j]^{p^{n_j}}=1$ for $1\le i<j \le t$.
\end{theorem}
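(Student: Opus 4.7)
The plan is to verify the defining properties of a Schur covering group: the group $\widetilde{G}$ specified by the presentation should fit in a central extension
\[
1 \to A \to \widetilde{G} \to \Gamma \to 1
\]
with $A = [\widetilde{G},\widetilde{G}] \subseteq Z(\widetilde{G})$ and $A \cong M(\Gamma)$. The strategy is to pin down $|\widetilde{G}|$ by matching upper and lower bounds, and then identify the kernel with the Schur multiplier.

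For the upper bound, the built-in condition that the commutators $[\sigma_i,\sigma_j]$ are central (forced by $[\widetilde{G},\widetilde{G}] = Z(\widetilde{G})$) makes $\widetilde{G}$ nilpotent of class at most $2$. A standard commutator-collection argument then lets one write every element in a normal form
\[
\sigma_1^{a_1}\cdots\sigma_t^{a_t}\prod_{1\le i<j\le t}[\sigma_i,\sigma_j]^{b_{ij}},\qquad 0\le a_i<p^{n_i},\ 0\le b_{ij}<p^{n_j}.
\]
This yields $|\widetilde{G}|\le |\Gamma|\cdot \prod_{i<j}p^{n_j}$, and projection onto $\widetilde{G}/[\widetilde{G},\widetilde{G}]$ gives a surjection onto $\Gamma$.

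For the matching lower bound, I would first compute $M(\Gamma)$: since $\Gamma$ is abelian, Definition \ref{d2.4} and Theorem \ref{t2.5} reduce $M(\Gamma)$ to $\Gamma\wedge\Gamma$ (the map $\gamma$ is trivial), which by a direct decomposition equals $\bigoplus_{1\le i<j\le t}C_{p^{n_j}}$, of order $\prod_{i<j}p^{n_j}$. I would then construct a concrete central extension $E$ of $\Gamma$ by $M(\Gamma)$ realizing the given presentation, taking $E = \Gamma \times M(\Gamma)$ as a set with multiplication twisted by the bilinear $2$-cocycle $\omega(\mathbf{a},\mathbf{b}) = \sum_{i<j} a_i b_j\, e_{ij}$, where $e_{ij}$ generates the $(i,j)$-summand of $M(\Gamma)$. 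The monotonicity $n_i\ge n_j$ for $i<j$ is exactly what makes $\omega$ well-defined modulo $p^{n_j}$, and direct computation shows that the lifts of the $\sigma_i$ satisfy all the stated relations while $[\tilde\sigma_i,\tilde\sigma_j] = e_{ij}$ has order exactly $p^{n_j}$. The universal property of the presentation then gives a surjection $\widetilde{G}\twoheadrightarrow E$, which combined with the upper bound is forced to be an isomorphism.

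Once this is in place, $[\widetilde{G},\widetilde{G}] \cong M(\Gamma)$ sits centrally in $\widetilde{G}$ with quotient $\Gamma$, which by definition makes $\widetilde{G}$ a Schur cover. The main obstacle is the construction of the cocycle $\omega$ and, in particular, the verification that the induced commutators attain their maximal possible order $p^{n_j}$ rather than a proper divisor; the normal form from the upper-bound step only gives an inequality on this order, so an independent realization of the group is needed to make the bound tight.
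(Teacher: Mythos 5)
The paper offers no proof of Theorem \ref{t5.1}; the result is quoted from Karpilovsky \cite[page 90]{Kar} and used as a black box, so your argument is necessarily a different route, and it is a correct and essentially standard one. The three ingredients all check out: the upper bound $|\widetilde G|\le |\Gamma|\cdot\prod_{i<j}p^{n_j}$ by commutator collection in a class-$2$ presentation; the computation $M(\Gamma)\cong\Gamma\wedge\Gamma\cong\bigoplus_{i<j}C_{p^{n_j}}$ (consistent with Theorem \ref{t2.5}, since $\gamma$ is trivial for abelian $\Gamma$); and the explicit stem extension $E$ built from the bilinear cocycle $\omega(\mathbf a,\mathbf b)=\sum_{i<j}a_ib_je_{ij}$, whose well-definedness does hinge on $n_i\ge n_j$ for $i<j$, exactly as you say. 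You are also right that the independent realization $E$ is what forces the commutators to attain the full order $p^{n_j}$; the presentation alone only bounds that order from above. Two small points. First, to get the surjection $\widetilde G\twoheadrightarrow E$ you should note explicitly that $E$ is generated by the lifts $\tilde\sigma_i$; this is immediate because the $e_{ij}=[\tilde\sigma_i,\tilde\sigma_j]$ already lie in $\langle\tilde\sigma_i\rangle$ and the images of the $\tilde\sigma_i$ generate $\Gamma$. Second, the literal condition $Z(\widetilde G)=[\widetilde G,\widetilde G]$ in the statement should be read as ``the commutators are central,'' which is how you in fact use it: when $n_1>n_2$ the element $\sigma_1^{p^{n_2}}$ is central but is not in the commutator subgroup, so the stated equality fails for the group you (correctly) construct. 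What your proof establishes --- and what is actually required for a Schur cover --- is $[\widetilde G,\widetilde G]\subseteq Z(\widetilde G)$ together with $[\widetilde G,\widetilde G]\cong M(\Gamma)$ and $\widetilde G/[\widetilde G,\widetilde G]\cong\Gamma$. In the paper's applications (Theorems \ref{t5.2} and \ref{t5.4}) all the exponents $n_i$ coincide, and there the equality $Z(\widetilde G)=[\widetilde G,\widetilde G]$ does hold.
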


\begin{theorem} \label{t5.2}
Let $p$ be any prime number,
$n\ge 1$ be a positive integer.
Let $\widetilde{G}=\langle\sigma_1$, $\ldots$, $\sigma_{n+3}\rangle$ be the Schur covering group of the elementary abelian group $\Gamma$ of order $p^{n+3}$ defined in Theorem \ref{t5.1}.
Define a subgroup $H$ of order $p^n$ in $\widetilde{G}$ by $H=\langle [\sigma_1,\sigma_2][\sigma_3,\sigma_4]$, $[\sigma_1,\sigma_3][\sigma_4,\sigma_5]$, $[\sigma_1,\sigma_4][\sigma_5,\sigma_6]$,
$\ldots$, $[\sigma_1,\sigma_{n+1}][\sigma_{n+2},\sigma_{n+3}]\rangle$.
Define $G=\widetilde{G}/H$.
Then $B_0(G)$ contains a subgroup isomorphic to $(\bm{Z}/p\bm{Z})^n$.
\end{theorem}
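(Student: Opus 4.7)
The plan is to identify $B_0(G)$ explicitly via the exterior-product description of the Schur multiplier.

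First I would compute $M(G)$. Since $H \subseteq [\widetilde G, \widetilde G]$, the natural map $\widetilde G^{ab} \to G^{ab}$ is an isomorphism onto $\Gamma = (\bm F_p)^{n+3}$, and the Hochschild--Serre $5$-term exact sequence applied to the central extension $1 \to H \to \widetilde G \to G \to 1$ collapses (the map $H \to \widetilde G^{ab}$ being zero) to $M(\widetilde G) \to M(G) \to H \to 0$. Because $\widetilde G$ is a Schur cover, $M(\widetilde G) = 0$, so $M(G) \cong H \cong (\bm Z/p\bm Z)^n$. Moreover, since $\widetilde G$ is of class $2$ with elementary abelian abelianisation, the commutator map identifies $[\widetilde G, \widetilde G]$ with $\wedge^2 \Gamma$, and $H$ is thereby realised as the $\bm F_p$-subspace
$$H = \bigl\langle e_1 \wedge e_{i+1} + e_{i+2} \wedge e_{i+3} : 1 \le i \le n \bigr\rangle \subseteq \wedge^2 \Gamma,$$
where $e_k$ denotes the image of $\sigma_k$ in $\Gamma$.

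Next I would invoke Step 2 of the proof of Theorem \ref{t1.4}: $B_0(G) \cong \fn{Hom}(M(G)/M_0(G), \bm Q/\bm Z)$, where $M_0(G) = \langle x \wedge y : xy = yx\rangle$ sits inside $M(G)$. It therefore suffices to prove that $M_0(G) = 0$ inside $M(G) \cong H$. By naturality of the $5$-term sequence (equivalently by a direct lifting argument), if $x, y \in G$ commute and $\tilde x, \tilde y \in \widetilde G$ are any lifts, then the image of $x \wedge y \in M(G)$ in $H$ is $[\tilde x, \tilde y]$, which under the identification $[\widetilde G, \widetilde G] \cong \wedge^2 \Gamma$ is precisely the pure wedge $\bar x \wedge \bar y \in \wedge^2 \Gamma$. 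Consequently, the image of $M_0(G)$ in $H$ is the subgroup generated by the decomposable elements of $\wedge^2 \Gamma$ that happen to lie in $H$, and the whole problem reduces to showing that $H$ contains no nonzero decomposable vector.

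This decomposability claim is the key technical step. Writing a general element of $H$ as
$$\omega = \sum_{i=1}^n \lambda_i \bigl(e_1 \wedge e_{i+1} + e_{i+2} \wedge e_{i+3}\bigr)$$
and viewing it as a skew-symmetric $(n+3)\times(n+3)$ matrix $(\mu_{jk})$ over $\bm F_p$, decomposability is equivalent to the vanishing of every $4 \times 4$ principal sub-Pfaffian. I would single out the sub-Pfaffian on the index set $\{1, i+1, i+2, i+3\}$; a direct computation from the explicit entries of $\omega$ gives
$$\mu_{1, i+1}\mu_{i+2, i+3} - \mu_{1, i+2}\mu_{i+1, i+3} + \mu_{1, i+3}\mu_{i+1, i+2} = \lambda_i^2 + \lambda_{i-1}\lambda_{i+2},$$
with the convention $\lambda_j = 0$ for $j \le 0$ or $j > n$. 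Setting all these to zero and inducting on $i$: the case $i = 1$ gives $\lambda_1^2 = 0$, and assuming $\lambda_{i-1} = 0$ the $i$-th relation reduces to $\lambda_i^2 = 0$, forcing $\lambda_i = 0$; iterating yields $\omega = 0$, so $B_0(G) \cong (\bm Z/p\bm Z)^n$, which certainly contains a subgroup of the required form.

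The main obstacle is this last paragraph: one has to locate $4 \times 4$ sub-Pfaffians whose non-trivial term is precisely $\lambda_i^2$ in order to drive the induction. Crucially, the argument must be done at the Pfaffian level rather than via $\omega \wedge \omega \in \wedge^4 \Gamma$, since in characteristic $2$ one has $\omega \wedge \omega = 0$ automatically and such an approach would detect nothing; once the characteristic-uniform Pfaffian identity above is identified, the remainder of the proof is essentially bookkeeping.
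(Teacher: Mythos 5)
Your argument is correct in substance but follows a genuinely different (essentially dual) route from the paper's. The paper works cohomologically: it sets $N=Z(\widetilde G)/H$, uses the five-term inflation--restriction sequence to show that the image of the inflation map $H^2(G/N,\bm{Q}/\bm{Z})\to H^2(G,\bm{Q}/\bm{Z})$ has order $p^n$, and then proves this image lies in $B_0(G)$ by showing that $AN/N$ is cyclic for every bicyclic subgroup $A\le G$. You instead work homologically with $B_0(G)\simeq \fn{Hom}(M(G)/M_0(G),\bm{Q}/\bm{Z})$, get a surjection $M(G)\to H$ from the five-term homology sequence, and show $M_0(G)$ dies in $H$. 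These are two faces of the same coin: for commuting $x,y\in G$, the paper's condition that $\langle\bar x,\bar y\rangle$ be cyclic in $G/N\simeq\Gamma$ is exactly $\bar x\wedge\bar y=0$, while the constraint $xy=yx$ translates to $\bar x\wedge\bar y\in H$ under $[\widetilde G,\widetilde G]\simeq\wedge^2\Gamma$; so both proofs hinge on the identical combinatorial fact that $H$ contains no nonzero decomposable element of $\wedge^2\Gamma$. The paper establishes this by row-reducing the coordinate matrix of $\bar x,\bar y$ and a case analysis on leading indices; your Pfaffian criterion, with the sub-Pfaffians on $\{1,i+1,i+2,i+3\}$ equal to $\lambda_i^2+\lambda_{i-1}\lambda_{i+2}$, is cleaner and characteristic-uniform --- I checked the matrix entries and the induction, and your remark that one must argue at the Pfaffian level rather than via $\omega\wedge\omega$ is well taken.

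One genuine error needs fixing: a Schur covering group does \emph{not} have trivial Schur multiplier. The defining property is that inflation $H^2(\Gamma,\bm{C}^\times)\to H^2(\widetilde G,\bm{C}^\times)$ vanishes, not that $M(\widetilde G)=0$; for instance the cover of $C_p\times C_p$ from Theorem \ref{t5.1} is extraspecial of order $p^3$ (or $D_8$ when $p=2$) and has nontrivial multiplier. So your claims $M(\widetilde G)=0$ and $M(G)\simeq H$ are unjustified. This does not sink the proof: the five-term sequence still gives an exact $M(\widetilde G)\to M(G)\xrightarrow{\partial} H\to 0$, your decomposability argument shows precisely that $M_0(G)\subseteq\fn{Ker}(\partial)$, hence $M(G)/M_0(G)$ surjects onto $H\simeq(\bm{Z}/p\bm{Z})^n$ and dually $B_0(G)$ contains $(\bm{Z}/p\bm{Z})^n$, which is all the theorem asserts. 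But the concluding statement $B_0(G)\simeq(\bm{Z}/p\bm{Z})^n$ should be deleted, as your argument does not prove it.
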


\begin{proof}
The proof is similar to that of Theorem 2.3 in \cite{Hkku}.

\bigskip
Step 1. Let $Z(\widetilde{G})$ be the center of $\widetilde{G}$.
Define $N=Z(\widetilde{G})/H$. $N$ is an elementary abelian group
of order $p^{\binom{n+3}{2}-n}$. Consider the Hochschild-Serre
5-term exact sequence,
\begin{align*}
0 &\to H^1(G/N,\bm{Q}/\bm{Z}) \to H^1(G,\bm{Q}/\bm{Z}) \to H^1(N,\bm{Q}/\bm{Z})^G \\
&\xrightarrow{\psi'} H^2(G/N,\bm{Q}/\bm{Z}) \xrightarrow{\psi} H^2(G,\bm{Q}/\bm{Z})
\end{align*}
where $\psi'$ is the transgression map and $\psi$ is the inflation map.

We claim that the image of $\psi:H^2(G/N,\bm{Q}/\bm{Z})\to
H^2(G,\bm{Q}/\bm{Z})$ is isomorphic to an elementary abelian group
of order $p^n$.

\medskip
Since $G/N\simeq C_p^{n+3}$, it follows that $H^2(G/N,\bm{Q}/\bm{Z})$ is the elementary abelian group of order $p^{\binom{n+3}{2}}$ by \cite[page 37]{Kar}.

Since $N\simeq C_p^{\binom{n+3}{2}-n}$, $H^1(N,\bm{Q}/\bm{Z})$ is the elementary abelian group of order $p^{\binom{n+3}{2}-n}$.
Since $N=Z(G)$, it follows that $G$ acts trivially on $H^1(N,\bm{Q}/\bm{Z})$,
i.e.\ $H^1(N,\bm{Q}/\bm{Z})=H^1(N,\bm{Q}/\bm{Z})^G$.

On the other hand, $N=[G,G]$.
Hence $H^1(G/N,\bm{Q}/\bm{Z})\to H^1(G,\bm{Q}/\bm{Z})$ is an isomorphism.
Thus $\psi'$ is an injection.
It follows that the image of the restriction map $\psi:H^2(G/N,\bm{Q}/\bm{Z})\to H^2(G,\bm{Q}/\bm{Z})$ is a group of order $p^{\binom{n+3}{2}-\left[\binom{n+3}{2}-n\right]}=p^n$.

\bigskip
Step 2. We will show that the image of
$\psi:H^2(G/N,\bm{Q}/\bm{Z})\to H^2(G,\bm{Q}/\bm{Z})$ is contained
in $B_0(G)$. Assume it. The proof of this theorem is finished.

For any bicyclic subgroup $A=\langle x,y\rangle$ of $G$, we will
show that the composite map
$H^2(G/N,\bm{Q}/\bm{Z})\xrightarrow{\psi}
H^2(G,\bm{Q}/\bm{Z})\xrightarrow{\fn{res}} H^2(A,\bm{Q}/\bm{Z})$
is the zero map.

Consider the following commutative diagram
\[
\begin{array}{c}
H^2(G/N,\bm{Q}/\bm{Z}) \xrightarrow{\psi} H^2(G,\bm{Q}/\bm{Z}) \xrightarrow{\fn{res}} H^2 (A,\bm{Q}/\bm{Z}) \\[4pt]
{}^{\psi_0} \bigg\downarrow \hspace*{5.5cm} \bigg\uparrow {}^{\psi_1} \\[-2pt]
H^2(AN/N,\bm{Q}/\bm{Z}) \stackrel{\widetilde{\psi}}{\simeq} H^2(A/A\cap N,\bm{Q}/\bm{Z})
\end{array}
\]
where $\psi_0$ is the restriction map, $\psi_1$ is the inflation
map, $\widetilde{\psi}$ is the natural isomorphism.

We claim that $AN/N$ is a cyclic group.
Assuming this claim,
we find $H^2(AN/N$, $\bm{Q}/\bm{Z})=0$ by \cite[page 37]{Kar}.
Thus $\fn{res}\circ \psi=\psi_1\circ \widetilde{\psi}\circ \psi_0=0$.

It remains to show that $AN/N$ is cyclic.

\bigskip
Step 3. For any bicyclic group $A=\langle x,y\rangle$ of $G$, we
will show that $A N/N$ is cyclic. The proof is very similar to
Step 3 and Step 4 in the proof of Lemma 2.2 of \cite{Hkku}.

Since $G/N \simeq C_p^{n+3}$, we regard $G/N$ as a vector space
over the finite field $\bm{F}_p$. Let $\bar{x}$ and $\bar{y}$ be
the image of $x$ and $y$ in $G/N$ respectively. Thus
$\bar{x}=\prod_i \bar{\sigma}_i^{\lambda_i}$, $\bar{y}=\prod_i
\bar{\sigma}_i^{\mu_i}$ where $1\le i\le n+3$ and $0\le
\lambda_i,\mu_i\le p-1$ (we adopt the multiplicative notation for
$G/N$).

We will perform row operations on the matrix
\[
\begin{pmatrix}
\lambda_1 & \lambda_2 & \cdots &\lambda_{n+3} \\
\mu_1 & \mu_2 & \cdots & \mu_{n+3}
\end{pmatrix}.
\]

In each row operation, the basis
$\{\bar{\sigma_1},\ldots,\bar{\sigma}_{n+3}\}$ will not be
changed. But we may replace the generators $x$, $y$. For example,
if we replace $\mu_1$ by $\mu_1-a\lambda_1$ by a row operation,
then we replace of generator $x$, $y$ by $x$, $yx^{-a}$.

If $A N/N$ is not cyclic, there will exist $i,j$ with $1\le i<
j\le n+3$ satisfying that $A=\langle x,y\rangle \subset G$ and
$\bar{x}=\prod_{i\le s\le n+3} \bar{\sigma}_s^{a_s}$,
$\bar{y}=\prod_{j\le t\le n+3} \bar{\sigma}_t^{b_t}$ where $0\le
a_s,b_t\le p-1$, with the following property
\begin{equation}
a_i=b_j=1 \quad\text{and}\quad a_j=0. \label{eq1}
\end{equation}

It follows that $x=\prod_{i\le s\le n+3} \sigma_s^{a_s}u_1$,
$y=\prod_{j\le t\le n+3} \sigma_t^{b_t}u_2$ for some $u_1,u_2\in
N$.

Since $N=Z(G)$, the identity $xy=yx$ is equivalent to the
following identity
\begin{equation}
[\prod_{i\le s\le n+3} \sigma_s^{a_s},\prod_{j\le t\le n+3}
\sigma_t^{b_t}]=1. \label{eq2}
\end{equation}

\medskip
Because $Z(G)=[G,G]$, we find $[x,yz]=[x,y][x,z]$,
$[xy,z]=[x,z][y,z]$ for any $x,y,z\in G$.

From \eqref{eq2}, we obtain
\begin{equation}
\prod_{j\le t\le n+3} [\sigma_i,\sigma_t]^{a_ib_t}\cdot \prod_{i+1\le s<t \le n+3} [\sigma_s,\sigma_t]^{a_sb_t-a_tb_s}=1 \label{eq3}
\end{equation}
with the convention $b_{i+1}=b_{i+2}=\cdots=b_{j-1}=0$.

Suppose that $i\ge 2$. Note that $N\simeq
[\widetilde{G},\widetilde{G}]/H$ may be regarded as a vector space
over the finite field $\bm{F}_p$ with basis
$\{[\sigma_1,\sigma_{n+2}],[\sigma_1,\sigma_{n+3}],[\sigma_s,\sigma_t]:
2\le s<t\le n+3\}$.

It follows that $a_ib_j=a_ib_{j+1}=\cdots=a_ib_{n+3}=0$. Hence
$b_j=b_{j+1}=\cdots=b_{n+3}=0$. But this is impossible because
$b_j=1$ by \eqref{eq1}. From now on, we assume that $i=1$.

\bigskip
Step 4. Assume that $i=1$. Since
$[\sigma_1,\sigma_s][\sigma_{s+1},\sigma_{s+2}]\in H$ for $2\le
s\le n+1$, we find that
$[\sigma_1,\sigma_s]=[\sigma_{s+1},\sigma_{s+2}]^{-1}$ in $G$.
Note that $b_{n+2}=b_{n+3}=0$. Thus \eqref{eq3} becomes
\begin{equation}
\prod_{j\le t\le n+1} [\sigma_{t+1},\sigma_{t+2}]^{-b_t} \cdot \prod_{2\le s<t\le n+3} [\sigma_s,\sigma_t]^{a_sb_t-a_tb_s}=1. \label{eq4}
\end{equation}

Consider the term $[\sigma_j,\sigma_t]$ for $t\ge j+2$.
We find that $a_jb_t-a_tb_j=0$.
Since $a_j=0$ and $b_j=1$ by \eqref{eq1},
it follows that $a_t=0$ for $t\ge j+2$.

In summary, we have shown that $a_j=a_{j+2}=a_{j+3}=\cdots=a_{n+3}=0$ and $b_1=b_2=\cdots=b_{j-1}=b_{n+2}=b_{n+3}=0$ with $a_1=b_j=1$.

\medskip
The identity \eqref{eq2} becomes
\begin{gather}
\prod_{j\le t\le n+1} [\sigma_1,\sigma_t]^{b_t}\cdot \prod_{2\le s
\le j-1 < j \le t\le n+1} [\sigma_s,\sigma_t]^{a_sb_t}
\cdot \prod_{j\le t\le n+1} [\sigma_{j+1},\sigma_t]^{a_{j+1}b_t}=1, \nonumber \\
\prod_{j\le t\le n+1} [\sigma_{t+1},\sigma_{t+2}]^{-b_t}\cdot
\prod_{2\le s \le j-1 < j \le t\le n+1}
[\sigma_s,\sigma_t]^{a_sb_t} \cdot \prod_{j\le t\le n+1}
[\sigma_{j+1},\sigma_t]^{a_{j+1}b_t}=1. \label{eq5}
\end{gather}

From \eqref{eq5}, we find that $a_2=a_3=\cdots=a_{j+1}=0$. Thus
$b_j=b_{j+1}=\cdots=b_{n+1}=0$. Again this is impossible because
$b_j=1$ by \eqref{eq1}.
\end{proof}

\begin{idef}{Remark.}
When $n=1$ in the above theorem, we obtain the group $G$ of order
$p^9$ exhibited in the papers of Saltman and Shafarevich
\cite[page 83; Sh, page 245]{Sa2}.
\end{idef}

\begin{lemma} \label{l5.3}
Let $p$ be a prime number, $n_1\ge n_2\ge \cdots \ge n_t$, $t\ge
2$, $\Gamma=\langle x_1\rangle\times \langle x_2\rangle \times
\cdots \times \langle x_t\rangle$ be a direct product of cyclic
groups such that $\langle x_i\rangle \simeq C_{p^{n_i}}$ for $1\le
i\le t$. For $1\le i<j\le t$, define a $2$-cocycle $f_{ij}:
\Gamma\times \Gamma \to \bm{Q}/\bm{Z}$ where
$f_{ij}(x^I,x^J)=\zeta_{n_j}^{-a_jb_i}$ where
$x^I:=x_1^{a_1}x_2^{a_2}\cdots x_t^{a_t}$,
$x^J:=x_1^{b_1}x_2^{b_2}\cdots x_t^{b_t}$ and we adopt the
multiplicative notation for $\bm{Q}/\bm{Z}$ $($i.e.\ regard
$\bm{Q}/\bm{Z}$ as the group of all roots of unity in
$\bm{C}^\times)$. Then $H^2(\Gamma,\bm{Q}/\bm{Z})$ is generated by
the cohomology classes of these $f_{ij}$ where $1\le i<j \le t$.
\end{lemma}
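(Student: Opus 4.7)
The plan is to reduce the statement to two known facts: the explicit description of the Schur multiplier $M(\Gamma)$ supplied by Theorem \ref{t5.1}, and the classical commutator pairing that identifies $H^2$ of an abelian group with the dual of $M(\Gamma)$. Once these are set up, the lemma amounts to computing $f_{ij}(x_k,x_l)f_{ij}(x_l,x_k)^{-1}$ and recognising the result as a generator of precisely the $(i,j)$-summand of the dual.

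First I would verify directly that each $f_{ij}$ is a $2$-cocycle. Writing $x^I=\prod_k x_k^{a_k}$, $x^J=\prod_k x_k^{b_k}$, $x^K=\prod_k x_k^{c_k}$ and expanding the cocycle identity
$f_{ij}(x^I,x^J)\,f_{ij}(x^Ix^J,x^K)=f_{ij}(x^I,x^Jx^K)\,f_{ij}(x^J,x^K)$,
both sides collapse to $\zeta_{p^{n_j}}^{-a_jb_i-a_jc_i-b_jc_i}$; this is a one-line check.

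Next I would use Theorem \ref{t5.1} to pin down $M(\Gamma)=H_2(\Gamma,\bm{Z})\simeq[\widetilde G,\widetilde G]$: it is generated by the commutators $[\sigma_i,\sigma_j]$ ($1\le i<j\le t$) of orders $p^{n_j}$ (using $n_i\ge n_j$), so $M(\Gamma)\simeq\bigoplus_{i<j}\bm{Z}/p^{n_j}\bm{Z}$. The universal coefficient theorem then gives
\[
H^2(\Gamma,\bm{Q}/\bm{Z})\simeq\operatorname{Hom}(M(\Gamma),\bm{Q}/\bm{Z})\simeq\bigoplus_{1\le i<j\le t}\bm{Z}/p^{n_j}\bm{Z}.
\]
Under this isomorphism a class $[\alpha]\in H^2(\Gamma,\bm{Q}/\bm{Z})$ corresponds to the homomorphism sending $[\sigma_k,\sigma_l]\mapsto\alpha(x_k,x_l)\,\alpha(x_l,x_k)^{-1}$; this is the usual commutator pairing, visible from the associated central extension $1\to\bm{Q}/\bm{Z}\to E\to\Gamma\to 1$, where the commutator of lifts of $x_k$ and $x_l$ lies in the central copy of $\bm{Q}/\bm{Z}$ and equals precisely this ratio.

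Finally, specialising to $\alpha=f_{ij}$, the defining formula yields
\[
f_{ij}(x_k,x_l)\,f_{ij}(x_l,x_k)^{-1}
=\zeta_{p^{n_j}}^{-\delta_{jk}\delta_{il}+\delta_{jl}\delta_{ik}},
\]
which, among pairs $(k,l)$ with $k<l$, equals $\zeta_{p^{n_j}}$ exactly when $(k,l)=(i,j)$ and equals $1$ otherwise (since $k<l$ rules out the case $j=k,\ i=l$). Hence $[f_{ij}]$ is a generator of the $(i,j)$-summand and the collection $\{[f_{ij}]:1\le i<j\le t\}$ generates $H^2(\Gamma,\bm{Q}/\bm{Z})$.

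The main obstacle is the middle step: producing the explicit commutator description of the universal coefficient pairing. For abelian $\Gamma$ this is classical, but writing it down carefully (either by reading off the commutator form of the classifying central extension, or by showing bimultiplicativity and alternativity of the map $[\alpha]\mapsto([x,y]\mapsto\alpha(x,y)\alpha(y,x)^{-1})$ and comparing orders) is the only nonroutine point; the verification that $f_{ij}$ is a cocycle and the final substitution are both direct exponent calculations.
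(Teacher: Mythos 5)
Your proof is correct, and it is in fact more self-contained than the paper's own argument, which consists essentially of a citation to \cite{Kar} together with an illustration of the case $t=2$: there the author builds the Schur covering group $\widetilde{G}$ of Theorem \ref{t5.1}, forms the central extension $1\to\langle[\sigma_1,\sigma_2]\rangle\to\widetilde{G}\to\Gamma\to1$, and checks via $\sigma_2^{a_2}\sigma_1^{b_1}=[\sigma_1,\sigma_2]^{-a_2b_1}\sigma_1^{b_1}\sigma_2^{a_2}$ that the extension cocycle attached to the obvious section is exactly $f_{12}$; generation is then left to the defining property of a Schur cover and to the reference for general $t$. You instead dualize: you identify $M(\Gamma)\simeq[\widetilde{G},\widetilde{G}]\simeq\bigoplus_{i<j}\bm{Z}/p^{n_j}\bm{Z}$, apply universal coefficients (with the $\fn{Ext}$ term vanishing because $\bm{Q}/\bm{Z}$ is divisible), and evaluate each $f_{ij}$ against the commutator pairing $[\alpha]\mapsto\bigl([x,y]\mapsto\alpha(x,y)\alpha(y,x)^{-1}\bigr)$, finding that $[f_{ij}]$ generates precisely the dual of the $(i,j)$-summand. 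The two routes are two faces of the same fact, since the pairing applied to an extension cocycle $\varepsilon$ returns the commutator $[\tilde x,\tilde y]=\varepsilon(x,y)\varepsilon(y,x)^{-1}$ of lifts; but your version treats all $t$ and all pairs $(i,j)$ uniformly and makes the generation claim precise without leaning on the citation. The one step you rightly flag as nonroutine closes exactly as you indicate: a class killed by the pairing has abelian extension group, hence lies in $\fn{Ext}(\Gamma,\bm{Q}/\bm{Z})=0$, so the pairing is injective on $H^2(\Gamma,\bm{Q}/\bm{Z})$, and since your computation shows the images of the $[f_{ij}]$ already exhaust $\fn{Hom}(M(\Gamma),\bm{Q}/\bm{Z})$, injectivity alone forces the $[f_{ij}]$ to generate. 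One small point worth recording: the well-definedness of $\zeta_{p^{n_j}}^{-a_jb_i}$ (with $a_j$ read modulo $p^{n_j}$ and $b_i$ modulo $p^{n_i}$) uses the hypothesis $n_i\ge n_j$ for $i<j$, the same inequality you invoke when assigning $[\sigma_i,\sigma_j]$ the order $p^{n_j}$.
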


\begin{proof}
See \cite[page 37 and 91]{Kar}. Here is a simple way to understand
this lemma. Assume that $t=2$. Construct the Schur covering group
$\widetilde{G}$ of $\Gamma$ by Theorem \ref{t5.1}. The central
extension $\varepsilon:1\to \langle c\rangle \to \widetilde{G}\to
\Gamma \to 1$ where $c=[\sigma_1,\sigma_2]$ is of order $p^{n_2}$
gives the 2-cocycle $f_{12}$. Explicitly, define a section $\mu:
\Gamma\to \widetilde{G}$ by $u(x^I)=\sigma^I$ where
$\sigma^I=\sigma_1^{a_1}\sigma_2^{a_2}$ if
$x^I=x_1^{a_1}x_2^{a_2}$. The extension defines a 2-cocycle by
$u(x^I)\cdot u(x^J)=\varepsilon(x^I,x^J)\cdot u(x^{I+J})$. Using
the fact that
$\sigma_2^{a_2}\sigma_1^{b_1}=[\sigma_1,\sigma_2]^{-a_2b_1}\sigma_1^{b_1}\sigma_2^{a_2}$,
we can verify easily $\varepsilon(x^I,y^J)=f_{12}(x^I,x^J)$ for
all $I$, $J$.
\end{proof}

\begin{theorem} \label{t5.4}
Let $p$ be a prime number, $n$ be a positive integer. Let
$\widetilde{G}=\langle \sigma_1,\sigma_2,\sigma_3,\sigma_4\rangle$
be the Schur covering group of the group $\Gamma\simeq C_{p^n}^4$
defined in Theorem \ref{t5.1}. Let $H$ be the cyclic subgroup of
$\widetilde{G}$ generated by
$[\sigma_1,\sigma_2][\sigma_3,\sigma_4]$. Define
$G=\widetilde{G}/H$. Then $B_0(G)$ contains a subgroup isomorphic
to $\bm{Z}/p^n\bm{Z}$.
\end{theorem}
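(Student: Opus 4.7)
The plan is to mimic the proof of Theorem \ref{t5.2}: find a concrete cohomology class in $H^2(G,\bm{Q}/\bm{Z})$ of order $p^n$ coming from the inflation $\psi:H^2(G/N,\bm{Q}/\bm{Z})\to H^2(G,\bm{Q}/\bm{Z})$ for $N=Z(G)$, and then check it lies in $B_0(G)$. First, set $N = Z(\widetilde G)/H$: by Theorem \ref{t5.1}, $Z(\widetilde G)=[\widetilde G,\widetilde G]$ has $\bm{Z}/p^n$-basis $\{[\sigma_i,\sigma_j]\}_{1\le i<j\le 4}$ (each of order $p^n$), and since $H$ is cyclic of order $p^n$ we get $N\simeq C_{p^n}^5$, $N=Z(G)=[G,G]$, and $G/N\simeq\Gamma\simeq C_{p^n}^4$. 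Running the Hochschild--Serre $5$-term exact sequence exactly as in Step 1 of Theorem \ref{t5.2}, using $H^1(G/N,\bm{Q}/\bm{Z})\xrightarrow{\sim}H^1(G,\bm{Q}/\bm{Z})$ (because $N=[G,G]$), $|H^2(G/N,\bm{Q}/\bm{Z})|=p^{6n}$ (Lemma \ref{l5.3}), and $|H^1(N,\bm{Q}/\bm{Z})^G|=|N|=p^{5n}$ (trivial $G$-action since $N$ is central), one obtains $|\fn{Image}(\psi)|=p^n$.

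The next step is to pin down the structure of $\fn{Image}(\psi)$. Under the identification $H^2(G/N,\bm{Q}/\bm{Z}) \simeq \fn{Hom}(\Lambda^2(G/N),\bm{Q}/\bm{Z})$, the kernel of $\psi$ equals the image of the transgression, which is dual to the surjective commutator map $\Lambda^2(G/N) \to N$, $\bar\sigma_i \wedge \bar\sigma_j \mapsto [\sigma_i,\sigma_j]$, whose kernel is the cyclic group of order $p^n$ generated by $\bar\sigma_1\wedge\bar\sigma_2 + \bar\sigma_3\wedge\bar\sigma_4$ (reflecting the single defining relation of $N$). Dualizing, $\fn{Image}(\psi) \simeq \bm{Z}/p^n\bm{Z}$; moreover the class $\alpha := \psi(f_{12})$, with $f_{12}$ as in Lemma \ref{l5.3}, pairs to $1/p^n$ on $\bar\sigma_1\wedge\bar\sigma_2 + \bar\sigma_3\wedge\bar\sigma_4$, hence has order $p^n$ in $H^2(G,\bm{Q}/\bm{Z})$ and generates $\fn{Image}(\psi)$.

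The remaining task is to show $\alpha \in B_0(G)$. For a bicyclic $A=\langle x,y\rangle \subset G$, the diagram used in Step 2 of the proof of Theorem \ref{t5.2} reduces this to showing $f_{12}$ restricts to zero in $H^2(AN/N,\bm{Q}/\bm{Z})$. Writing $\bar x = \prod \bar\sigma_i^{\lambda_i}$, $\bar y = \prod \bar\sigma_i^{\mu_i}$ and $m_{ij} = \lambda_i\mu_j - \lambda_j\mu_i$, the relation $[x,y]=1$ -- expanded via the centrality of $[G,G]=N$ and the sole relation $[\sigma_1,\sigma_2][\sigma_3,\sigma_4]=1$ -- yields
\[ m_{12} \equiv m_{34} \pmod{p^n}, \qquad m_{13}\equiv m_{14}\equiv m_{23}\equiv m_{24}\equiv 0 \pmod{p^n}.\]
Unlike in Theorem \ref{t5.2}, the subgroup $AN/N \subset C_{p^n}^4$ need not be cyclic, so the vanishing cannot simply come from $H^2(AN/N,\bm{Q}/\bm{Z})=0$. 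The hard part, which I expect to be the main obstacle, is to combine the above congruences with the Pl\"ucker identity
\[ m_{12}m_{34} - m_{13}m_{24} + m_{14}m_{23} = 0 \quad \text{in } \bm{Z} \]
to upgrade them to $p^n \mid m_{12}$. Concretely, Pl\"ucker gives $m_{12}m_{34} \equiv 0 \pmod{p^{2n}}$; writing $m_{12} = m_{34} + p^n t$ yields $m_{34}^2 + p^n t\,m_{34} \equiv 0 \pmod{p^{2n}}$, and a $p$-adic valuation argument (if $v_p(m_{34})=a<n$ then the dominant term $m_{34}^2$ has valuation $2a < 2n$, contradicting the congruence) forces $v_p(m_{34}) \ge n$, hence $p^n \mid m_{12}$. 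Then the alternating pairing value $m_{12}/p^n$ is $0$ in $\bm{Q}/\bm{Z}$, so $f_{12}|_{AN/N} = 0$, $\fn{res}_G^A(\alpha) = 0$ for every bicyclic $A$, and $\langle \alpha \rangle \simeq \bm{Z}/p^n\bm{Z}$ embeds in $B_0(G)$.
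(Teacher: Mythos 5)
Your proof is correct, and the key step is handled by a genuinely different argument from the paper's. Both proofs share the same skeleton (Hochschild--Serre with $N=Z(\widetilde G)/H$, image of the inflation $\psi$ of order $p^n$, then showing that image lies in $B_0(G)$), but they diverge at the crucial point. The paper does \emph{not} try to show that $f_{ij}$ dies in $H^2(AN/N,\bm{Q}/\bm{Z})$ --- precisely because, as you note, $AN/N$ need not be cyclic --- and instead evaluates the restricted cocycle explicitly via \eqref{eq8}, runs a case analysis on the $p$-adic order of the exponent vectors $I$, $J$ (Steps 2--3, including a normalization $b_1=0$ and the reduction to the case $e<n/2$), and finally kills the surviving term $\zeta^{-c_1d_1a_ia_j}$ by comparing with the cyclic subgroup $\langle x\rangle$ where $H^2$ vanishes. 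You instead prove the stronger statement that the commutation relations \eqref{eq6}--\eqref{eq7}, combined with the Pl\"ucker identity $m_{12}m_{34}-m_{13}m_{24}+m_{14}m_{23}=0$ over $\bm{Z}$ and a valuation argument, force \emph{all six} minors $m_{ij}$ to vanish mod $p^n$, i.e.\ $\bar x\wedge\bar y=0$ in $\Lambda^2(G/N)$; since $\Lambda^2(AN/N)$ is generated by $\bar x\wedge\bar y$, every class inflated from $G/N$ then restricts to zero on $A$ at once. I checked the valuation step ($v_p(m_{34}^2+p^ntm_{34})=2v_p(m_{34})<2n$ when $v_p(m_{34})<n$) and it is sound; the sign ambiguity in $m_{12}\equiv\pm m_{34}$ is harmless. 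Your route is shorter and more conceptual, and as a bonus your dualization of the transgression against the cyclic kernel $\langle\bar\sigma_1\wedge\bar\sigma_2+\bar\sigma_3\wedge\bar\sigma_4\rangle$ actually justifies that the image of $\psi$ is \emph{cyclic} of order $p^n$ (the paper infers this from order counts alone). The cost is that you lean on the natural identification $H^2(B,\bm{Q}/\bm{Z})\simeq\fn{Hom}(\Lambda^2B,\bm{Q}/\bm{Z})$ for abelian $B$ and its compatibility with restriction and transgression, which the paper's bare-hands cocycle computation avoids; you should cite or briefly justify that identification if you write this up.
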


\begin{proof}
The strategy is similar to that of Theorem \ref{t5.2}. We use the
Hochschild-Serre 5-term exact sequence. As in the proof of Theorem
\ref{t5.2}, define $N=Z(\widetilde{G})/H\subset G$. It is not
difficult to show that the following is an exact sequence
\[
0\to H^1(N,\bm{Q}/\bm{Z})^G \xrightarrow{\psi'} H^2(G/N,\bm{Q}/\bm{Z})\xrightarrow{\psi} H^2(G,\bm{Q}/\bm{Z})
\]
and $H^1(N,\bm{Q}/\bm{Z})^G\simeq (\bm{Z}/p^n\bm{Z})^5$, $H^2(G/N,\bm{Q}/\bm{Z})\simeq (\bm{Z}/p^n\bm{Z})^6$.
Hence the image of $\psi$ is isomorphic to $\bm{Z}/p^n \bm{Z}$.

Then we claim that $\psi(H^2(G/N,\bm{Q}/\bm{Z}))\subset B_0(G)$,
i.e.\ the composite map $H^2(G/N,$
$\bm{Q}/\bm{Z})\xrightarrow{\psi}
H^2(G,\bm{Q}/\bm{Z})\xrightarrow{\fn{res}} H^2(A,\bm{Q}/\bm{Z})$
is the zero map for all bicyclic subgroups $A$ of $G$. Since
$H^2(G/N,\bm{Q}/\bm{Z})$ is generated by the cohomology classes of
$f_{ij}$ (where $1\le i<j\le 4$) constructed in Lemma \ref{l5.3},
it suffices to show that the image of each $f_{ij}$ under the
composite map becomes a coboundary.

In conclusion, it remains to show that, for $1\le i<j\le 4$,
$\fn{res}_G^A \circ \psi (f_{ij})$ is a coboundary for any bicyclic subgroup $A=\langle x,y \rangle \subset G$.
Remember that $\psi$ is the inflation map $\inf_{G/N}^G$.

\bigskip
Step 1. Denote by
$\bar{\sigma}_1,\bar{\sigma}_2,\bar{\sigma}_3,\bar{\sigma}_4$ the
images of $\sigma_1,\sigma_2,\sigma_3,\sigma_4\in G$ in $G/N$.
Write $\sigma^I$ for the element
$\sigma_1^{a_1}\sigma_2^{a_2}\sigma_3^{a_3}\sigma_4^{a_4}$ where
$I=(a_1,a_2,a_3,a_4)$ with $0\le a_i \le p^n-1$; similarly
$\bar{\sigma}^I=\bar{\sigma}_1^{a_1} \bar{\sigma}_2^{a_2}
\bar{\sigma}_3^{a_3} \bar{\sigma}_4^{a_4}$.

If $A=\langle x,y\rangle \subset G$ is a bicyclic subgroup,
then $x=\sigma^I u_1$, $y=\sigma^J u_2$ for some $I=(a_1,a_2,a_3,a_4)$, $J=(b_1,b_2,b_3,b_4)$ with $0\le a_i,b_j\le p^n-1$, for some $u_1,u_2\in N$.
Note that $xy=yx$ if and only if $[\sigma^I,\sigma^J]=1$.
Expanding the commutator $[\sigma^I, \sigma^J]$ and using the definition of $H$,
we find that $[\sigma^I,\sigma^J]=1$ if and only if
\begin{align}
& a_ib_j-a_jb_i \equiv 0 \, (mod \, p^n) \quad \mbox{for $0\le i<j\le 4$ with $(i,j)\ne (1,2), (3,4)$, and} \label{eq6} \\
& a_1b_2-a_2b_1 \equiv a_3b_4-a_4b_3 \, (mod \, p^n). \label{eq7}
\end{align}

On the other hand, we can evaluate the 2-cocycle $\fn{res}_G^A
\circ \psi (f_{ij})$ as follows. Without loss of generality, we
may assume that $A=\langle x\rangle \times \langle y\rangle$ is a
direct product. Thus every element in $A$ can be written as
$x^{c_1}y^{c_2}$ for some non-negative integers $c_1$, $c_2$
(unique up to the orders of $x$ and $y$). If $c_1$, $c_2$, $d_1$,
$d_2$ are non-negative integers, then
\begin{equation}
\fn{res}_G^A \circ \psi(f_{ij}) (x^{c_1}y^{c_2},x^{d_1}y^{d_2})=f_{ij}(\bar{\sigma}^{c_1I+c_2J},\bar{\sigma}^{d_1I+d_2J})=\zeta^{-(c_1a_j+c_2b_j)(d_1a_i+d_2b_i)} \label{eq8}
\end{equation}
where $\zeta=\zeta_{p^n}$.

\bigskip
Step 2. Define a set $S:=\{I=(a_1,a_2,a_3,a_4): 0\le a_i \le
p^n-1\}$. Define an order function $\fn{ord}(I)$ on $S$ by
$\fn{ord}(I)=\max\{c\in \bm{N} \cup \{0\}:p^c$ divides
$a_1,a_2,a_3,a_4\}$ if $I\ne (0,0,0,0)$; if $I=(0,0,0,0)$ we
define $\fn{ord}(I)=p^n$.

Return to the bicyclic group $A=\langle x,y\rangle$ with $x=\sigma^I u_1$, $y=\sigma^J u_2$ where $I=(a_1,a_2$, $a_3,a_4)$ and $J=(b_1,b_2,b_3,b_4)$, $u_1,u_2\in N$.

If $\fn{ord}(I)=n=\fn{ord}(J)$, then $\fn{res}_G^A \circ
\psi(f_{ij}) (x^{c_1}y^{c_2},x^{d_1}y^{d_2})=1$ for all $c_1$,
$c_2$, $d_1$, $d_2$ by \eqref{eq8}, i.e. $\fn{res}_G^A \circ
\psi(f_{ij})$ is a coboundary and we are done.

From now on, we assume that at least one of $\fn{ord}(I)$ and
$\fn{ord}(J)$ is $\le n-1$.

\bigskip
Step 3. Without loss of generality, we may assume that
$\fn{ord}(I)\le \fn{ord}(J)$ and $e=\fn{ord}(I)$ with $0 \le e \le
n-1$. Thus we may assume that $p^e|a_i$ for $1\le i\le 4$, and
$p^{e+1}\nmid a_1$ (the case when $p^{e+1}\nmid a_i$ for other
indices $i$ can be treated similarly).

If $e \ge n/2$, then $\fn{res}_G^A \circ \psi(f_{ij})
(x^{c_1}y^{c_2},x^{d_1}y^{d_2})=1$ again by \eqref{eq8}. Thus we
may assume that $e < n/2$ from now on.

We may also assume that $b_1=0$. Otherwise, replace the generator
$y$ by $y^{a'_1} x^{-b'_1}$ where $a_1=p^ea'_1$ and $b_1=p^eb'_1$.

By \eqref{eq6}, we find that $b_3$ and $b_4$ are divisible by
$p^{n-e}$, because $b_1=0$. By \eqref{eq7} $b_2$ is also divisible
by $p^{n-e}$.

By \eqref{eq8} $\fn{res}_G^A \circ \psi(f_{ij})
(x^{c_1}y^{c_2},x^{d_1}y^{d_2})= \zeta^{-c_1d_1a_ia_j}$, because
$e < n/2$.

\bigskip
Step 4. We will show that the $2$-cocycle evaluated at the end of
the last step is a $2$-coboundary.

Consider the cyclic group $A_1=\langle x \rangle \subset A \subset
G$. Since $H^2(A_1,\bm{Q}/\bm{Z})=0$ by \cite[page 37]{Kar}, it
follows that $H^2(G/N,\bm{Q}/\bm{Z})\xrightarrow{\psi}
H^2(G,\bm{Q}/\bm{Z}) \xrightarrow{\fn{res}}
H^2(A_1,\bm{Q}/\bm{Z})$ is the zero map. Thus $\fn{res}_G^{A_1}
\circ \psi(f_{ij}) (x^{c_1},x^{d_1})$ is a coboundary for any
non-negative integers $c_1,d_1$.

On the other hand, we find that $\fn{res}_G^{A_1} \circ
\psi(f_{ij}) (x^{c_1},x^{d_1})= \zeta^{-c_1d_1a_ia_j}$ also by
\eqref{eq8}. But this is a $2$-coboundary by the previous
paragraph. Thus there is a $1$-cochain $\alpha : A_1 \to
\bm{Q}/\bm{Z}$ such that $\delta (\alpha) (x^{c_1},
x^{d_1})=\zeta^{-c_1d_1a_ia_j}$ where $\delta$ is the differential
map from the group of $1$-cochains to that of $2$-cochains.

Define a $1$-cochain $\beta : A \to \bm{Q}/\bm{Z}$ by $\beta
(x^{c_1}y^{c_2})=\alpha (x^{c_1})$. It follows that $\delta
(\beta) (x^{c_1}y^{c_2},$ $x^{d_1}y^{d_2})=\zeta^{-c_1d_1a_ia_j}$.
Done.

 \end{proof}

\begin{idef}{Remark.}
In \cite{CHKK} and \cite{Hkku}, it is proved that, if $G$ is a
group of order $p^5$ where $p$ is an odd prime number (resp.\ a
group of order $2^6$), then $B_0(G)\ne 0$ if and only if $G$
belongs to the 10-th isoclinism family (resp.\ $G$ belongs to the
13-rd isoclinism family, i.e.\ $G=G(2^6,i)$ with $241\le i\le
245$). For these groups $G$ with $B_0(G)\ne 0$, it can be shown
that $B_0(G) \simeq \bm{Z}/p\bm{Z}$ if $G$ is of order $p^5$, and
$B_0(G)\simeq \bm{Z}/2\bm{Z}$ if $G$ is of order $2^6$.

The proof is similar to the proof in \cite[Theorem 5.4]{Hkku}. For
example, in the case when $G$ is a group of order $p^5$ where $p$
is an odd prime number, choose a normal subgroup $N = \langle f_1,
f_3, f_4, f_5 \rangle$ when $G$ is the group defined in the proof
of \cite[Theorem 2.3]{Hkku}. Apply the 7-term exact sequence of
Hochschild-Serre \cite[Section 5]{DHW,Hkku}. Note that $B_0(G)$ is
a non-zero subgroup of $H^2(G, \bm{Q}/\bm{Z})_1$ while $0 \to
H^2(G, \bm{Q}/\bm{Z})_1 \to H^1(G/N, H^1(N,\bm{Q}/\bm{Z}))$ is an
exact sequence. It is not difficult to show that $H^1(G/N,
H^1(N,\bm{Q}/\bm{Z})) \simeq \bm{Z}/p\bm{Z}$. Hence $B_0(G) \simeq
\bm{Z}/p\bm{Z}$.
\end{idef}

\newpage
\renewcommand{\refname}{\centering{References}}


\begin{thebibliography}{CTS2}


\bibitem[Ba]{Ba}
J. Barge, \textit{Cohomologie des groupes et corps d'invariants multiplicatifs},
Math. Ann. 283 (1989), 519--528.

\bibitem[Bo]{Bo}
F. A. Bogomolov, \textit{The Brauer group of quotient spaces by
linear group actions}, Math. USSR Izv. 30 (1988), 455--485.

\bibitem[BL]{BL}
R. Brown and J.-L. Loday, \textit{Van Kampen theorems for diagrams
of spaces}, Topology 26 (1987), 311--335.


\bibitem[Br]{Br}
K. S. Brown, \textit{Cohomology of groups}, GTM vol.87,
Springer-Verlag, New York, 1982.


\bibitem[CE]{CE}
H. Cartan and S. Eilenberg,
\textit{Homological algebra}, Princeton University Press, Princeton, 1956.

\bibitem[CHKK]{CHKK}
H. Chu, S.-J. Hu, M. Kang and B. E. Kunyavskii, \textit{Noether's
problem and the unramified Brauer group for groups of order 64},
International Math. Research Notices 12 (2010), 2329--2366.

\bibitem[De]{De}
F. R. DeMeyer, \textit{Generic polynomials}, J. Algebra 84 (1983),
441--448.

\bibitem[DHW]{DHW}
K. Dekimpe, M. Hartl and S. Wauters,
\textit{A seven-term exact sequence for the cohomology of a group extension},
arXiv: 1103.4052, to appear in ``J. Algebra".

\bibitem[EM]{EM}
S. Endo and T. Miyata, \textit{On a classification of the function
fields of algebraic tori}, Nagoya Math. J. 56 (1974) 85--104;
Corrigenda, ibid. 79 (1980) 187--190.


\bibitem[HK1]{HK1}
M. Hajja and M. Kang, \textit{Three-dimensional purely monomial
group actions}, J. Algebra 170 (1994), 850--860.

\bibitem[HK2]{HK2}
M. Hajja and M. Kang, \textit{Some actions of symmetric groups},
J. Algebra 177 (1995), 511--535.

\bibitem[HKK]{HKK}
A. Hoshi, M. Kang and H. Kitayama,
\textit{Quasi-monomial actions and some 4-dimensional rationality problem}, arXiv: 1201.1332.

\bibitem[HKKu]{Hkku}
A. Hoshi, M. Kang and B. E. Kunyavskii, \textit{Noether's problem
and unramified Brauer groups}, arXiv: 1202.5812, to appear in
``Asian J. Math.".

\bibitem[Is]{Is}
I. M. Isaacs, \textit{Finite group theory}, GSM. vol.92, Amer.
Math. Soc., Providence, 2008.

\bibitem[Ja]{Ja}
S. Jambor, \textit{Generic extensions and generic polynormials for semidirect products},
J. Algebra 322 (2009), 4040--4052.

\bibitem[Ka1]{Ka1}
M. Kang, \textit{Retract rational fields}, J. Algebra 349 (2012), 22--37.

\bibitem[Ka2]{Ka2}
M. Kang, \textit{Frobenius groups and retract rationality}, arXiv: 1204.1796.

\bibitem[Kar]{Kar}
G. Karpilovsky, \textit{The Schur multiplier}, Clarendon Press, Oxford, 1987.

\bibitem[Ku]{Ku}
B. E. Kunyavskii, \textit{The Bogomolov multiplier of finite
simple groups}, in ``Cohomological and geometric approaches to
rationality problems", edited by F. Bogomolov and Y. Tschinkel,
Progress in Math. vol. 282, Birkh\"auser, Boston, 2010.


\bibitem[Le]{Le}
H. W. Lenstra, Jr., \textit{Rational functions invariant under a
finite abelian group}, Invent. Math. 25 (1974), 299--325.

\bibitem[Lo]{Lo}
M. Lorenz,
\textit{Multiplicative invariant theory},
Encyclo. Math. Sci. vol. 135, Springer-Verlag, 2005, Berlin.

\bibitem[Mi]{Mi}
C. Miller, \textit{The second cohomology of a group}, Proc. Amer.
Math. Soc. 3 (1952), 588--595.


\bibitem[MT]{MT}
Y. I. Manin and M. A. Tsfasman, \textit{Rational varieties:
algebra, geometry and arithmetric}, Russian Math. Survey 41
(1986), 51--116.


\bibitem[Ri]{Ri}
D. S. Rim, \textit{Modules over finite groups}, Ann. Math. 69 (1959), 700--712.

\bibitem[Sa1]{Sa1}
D. J. Saltman, \textit{Generic Galois extensions and problems in
field theory}, Advances in Math. 43 (1982), 250--283.

\bibitem[Sa2]{Sa2}
D. J. Saltman, \textit{Noether's problem over an algebraically
closed field}, Invent. Math. 77 (1984), 71--84.

\bibitem[Sa3]{Sa3}
D. J. Saltman, \textit{Retract rational fields and cyclic Galois
extensions}, Israel J. Math. 47 (1984), 165--215.

\bibitem[Sa4]{Sa4}
D. J. Saltman, \textit{Multiplicative field invariants}, J.
Algebra 106 (1987), 221--238.

\bibitem[Sa5]{Sa5}
D. J. Saltman, \textit{Multiplicative field invariants and the
Brauer group}, J. Algebra 133 (1990), 533-544.

\bibitem[Se]{Se}
J-P. Serre, \textit{Local fields}, GTM vol.67, Springer-Verlag,
New York, 1979.

\bibitem[Sh]{Sh}
I. R. Shafarevich,
\textit{On L\"uroth's problem},
Proc Steklov Inst. Math. 183 (1991), 241--246.

\bibitem[Sw1]{Sw3}
R. G. Swan, \textit{Induced representations and projective
modules}, Ann. Math. 71 (1960), 552--578.

\bibitem[Sw2]{Sw4}
R. G. Swan, \textit{Invariant rational functions and a problem of
Steenrod}, Invent. Math. 7 (1969), 148--158.


\bibitem[Sw3]{Sw1}
R. G. Swan, \textit{Noether's problem in Galois theory}, in ``Emmy
Noehter in Bryn Mawr", edited by B. Srinivasan and J. Sally,
Springer-Verlag, 1983, Berlin.

\bibitem[Sw4]{Sw2}
R. G. Swan, \textit{The flabby class group of a finite cyclic
group}, in ``Proceedings of The 4th International Congress of
Chinese Mathematicians, Hangzhou, 2007", edited by Lizhen Ji,
Kefeng Liu, Lo Yang and Shing-Tung Yau, Higher Education Press
(Beijing) and International Press (Somerville), 2008.




\bibitem[Ta]{Ta}
K. Tahara, \textit{On the second cohomology groups of semidirect products},
Math. Z. 129 (1972), 365--379.

\bibitem[Vo]{Vo}
V. E. Voskresenskii, \textit{Algebraic groups and their birational
invariants}, Transl. Math. Monographs vol. 179, Amer. Math. Soc.,
1998, Providence.


\end{thebibliography}
\end{document}